\documentclass[a4paper,twoside,10pt]{article}

\usepackage[a4paper,left=3cm,right=3cm, top=3cm, bottom=3cm]{geometry}
\usepackage[latin1]{inputenc}
\usepackage{mathrsfs}
\usepackage{graphicx}
\usepackage{epstopdf}
\epstopdfsetup{suffix=,}
\usepackage{amsmath}
\usepackage{amsthm}
\usepackage{amssymb}
\usepackage{hyperref}
\usepackage{stmaryrd}
\usepackage{float}
\usepackage{bigints}
\usepackage{cite}
\usepackage{color}
\usepackage[abs]{overpic}
\usepackage[font=footnotesize,labelfont=bf]{caption}
\usepackage{cases}
\usepackage{tikz}
\usepackage{algorithm,algpseudocode}
\usepackage{rotating}
\usepackage{blkarray}
\usetikzlibrary{matrix,calc,arrows}
\usepackage[author={Lorenzo}]{pdfcomment}
\usepackage{soul,xcolor}
\usepackage{verbatim}
\usepackage{graphicx}
\usepackage{subcaption}
\usepackage{algorithm}
\usepackage{pifont}
\usepackage{multirow}

\restylefloat{table}
\theoremstyle{plain}
\newtheorem{thm}{Theorem}[section]

\newtheorem{lem}[thm]{Lemma}
\newtheorem{prop}[thm]{Proposition}
\theoremstyle{definition}

\theoremstyle{remark}
\newtheorem{remark}{Remark}

\newcommand{\eremk}{\hbox{}\hfill\rule{0.8ex}{0.8ex}}

\newcommand{\curl}{\text{rot}}
\newcommand{\rot}{\operatorname{rot}}
\newcommand{\curlbold}{\textbf{curl}}

\newcommand{\Hdiv}{H(\div,\Omega)}
\newcommand{\HdivE}{H(\div,\E)}
\newcommand{\HrotE}{H(\rot,\E)}
\newcommand{\HcurlE}{\HrotE}
\renewcommand{\div}{\operatorname*{div}}
\newcommand{\f}{f}
\newcommand{\n}{\mathbf n}
\newcommand{\nE}{\n_\E}
\renewcommand{\a}{a}
\newcommand{\an}{a_n}
\newcommand{\anomeganu}{a_n^{\omega_\nu}}
\newcommand{\aomeganu}{a^{\omega_\nu}}
\newcommand{\aE}{\a^\E}
\newcommand{\anE}{\aE_n}
\newcommand{\atilde}{\widetilde \a}
\newcommand{\atilden}{\widetilde \a_n}
\newcommand{\atildeE}{\atilde^\E}
\newcommand{\atildenE}{\atilden^\E}
\renewcommand{\b}{b}

\newcommand{\Sigmabold}{\boldsymbol \Sigma}
\newcommand{\SigmaboldgN}{\Sigmabold_{\gN}}
\newcommand{\Sigmaboldz}{\Sigmabold_{0}}
\newcommand{\Sigmaboldn}{\Sigmabold_n}
\newcommand{\SigmaboldngN}{\Sigmabold_{n,\gN}}
\newcommand{\Sigmaboldnz}{\Sigmabold_{n,0}}
\newcommand{\SigmaboldnE}{\Sigmaboldn(\E)}
\newcommand{\Vcaln}{\mathcal V_n}
\newcommand{\V}{V}
\newcommand{\Vtilde}{\widetilde V}
\newcommand{\VtildegD}{\Vtilde_{\gD}}
\newcommand{\Vtildez}{\Vtilde_{0}}
\newcommand{\Vn}{\V_n}
\newcommand{\Vnstar}{\Vn^*}
\newcommand{\VnE}{\Vn(\E)}
\newcommand{\Vtilden}{\Vtilde_n}
\newcommand{\VtildengD}{\Vtilde_{n,\gD}}
\newcommand{\Vtildenz}{\Vtilde_{n,0}}
\newcommand{\VtildenE}{\Vtilden(\E)}
\newcommand{\sigmabold}{\boldsymbol \sigma}
\newcommand{\taubold}{\boldsymbol \tau}
\newcommand{\taun}{\mathcal T_n}
\newcommand{\E}{K}
\newcommand{\e}{e}
\newcommand{\vtilden}{\widetilde v_n}
\newcommand{\utilden}{\widetilde u_n}
\newcommand{\vn}{v_n}
\newcommand{\un}{u_n}
\newcommand{\sigmaboldn}{\sigmabold_n}
\newcommand{\sigmaboldnnu}{\sigmaboldn^\nu}

\newcommand{\tauboldn}{\taubold_n}
\newcommand{\tauboldnnu}{\tauboldn^{\nu}}
\newcommand{\rnu}{r^{\nu}}
\newcommand{\rnnu}{r_n^{\nu}}
\newcommand{\rn}{r_n}
\newcommand{\omeganu}{\omega_\nu}
\newcommand{\phitildenu}{\widetilde \varphi_\nu}
\newcommand{\qn}{q_n}
\newcommand{\qnnu}{q_n^{\nu}}

\newcommand{\tauboldtilden}{\widetilde{\taubold}_n}
\newcommand{\tauboldboundaryn}{\taubold_n^{\partial \E}}
\newcommand{\p}{p}
\newcommand{\pE}{\p_\E}
\newcommand{\EE}{\mathcal E ^\E}
\newcommand{\boldalpha}{\boldsymbol \alpha}
\newcommand{\malphaE}{m_{\boldalpha}^\E}
\newcommand{\malphaboldE}{\mathbf m_{\alpha}^\E}

\newcommand{\Pinablap}{\widetilde\Pi^\nabla_\p}

\newcommand{\Piboldzp}{\boldsymbol\Pi^0_\p}

\newcommand{\qp}{q_\p}
\newcommand{\qppo}{q_{\p+1}}
\newcommand{\qboldp}{\mathbf q_\p}

\newcommand{\qpmo}{q_{\p-1}}
\newcommand{\qpmt}{q_{\p-2}}
\newcommand{\SE}{S^\E}
\newcommand{\StildeE}{\widetilde S^\E}

\newcommand{\alphatilde}{\widetilde \alpha}
\newcommand{\h}{h}
\newcommand{\hE}{\h_\E}
\newcommand{\he}{\h_\e}
\newcommand{\Pitildez}{\widetilde \Pi^0_{\p-2}}
\newcommand{\Gcal}{\boldsymbol{\mathcal G}}

\newcommand{\Pizpmo}{\Pi_{\p-1}^0}

\renewcommand{\k}{\kappa}
\newcommand{\xbold}{\mathbf x}
\newcommand{\xE}{\xbold_\E}
\newcommand{\Ibold}{\mathbf I}
\newcommand{\GammaD}{\Gamma_D}
\newcommand{\GammaN}{\Gamma_N}
\newcommand{\gD}{g_D}
\newcommand{\gN}{g_N}
\newcommand{\utilde}{\widetilde u}
\newcommand{\vtilde}{\widetilde v}

\newcommand{\varphitilde}{\widetilde \varphi}
\newcommand{\varphibold}{\boldsymbol \varphi}
\newcommand{\En}{\mathcal {E}_{n}}
\newcommand{\Enomeganu}{\En^{\omeganu}}
\newcommand{\EnI}{\En^I}
\newcommand{\EnB}{\En^B}
\newcommand{\Nun}{\mathcal {V}_{n}}
\newcommand{\NunI}{\Nun^I}
\newcommand{\NunB}{\Nun^B}
\newcommand{\NuE}{\mathcal V^\E}
\newcommand{\ctaubold}{c_{\taubold}}
\newcommand{\ctaubolde}{\ctaubold^\e}
\newcommand{\ctauboldGammaD}{\ctaubold^{\GammaD}}
\newcommand{\cB}{c_B}
\newcommand{\cOmega}{c_\Omega}
\newcommand{\phitilde}{\widetilde \varphi}

\newcommand{\cE}{c^{\E}}

\newcommand{\NE}{N^\E}
\newcommand{\etahyp}{\eta_{\mathfrak{eq}}}
\newcommand{\etahypE}{\eta_{\mathfrak{eq}, \E}}
\newcommand{\etaloc}{\eta_{\mathfrak{flux}}}
\newcommand{\etalocE}{\eta_{\mathfrak{flux}, \E}}

\newcommand{\taunomeganu}{\taun^{\omeganu}}
\newcommand{\gammatilde}{\widetilde \gamma}
\newcommand{\pbold}{\mathbf \p}
\newcommand{\NumEl}{N_{\taun}}

\newcommand{\etaRes}{\eta_{\mathfrak{res}}}
\newcommand{\etaResE}{\eta_{\mathfrak{res},\E}}
\newcommand{\Ihyp}{I_{\mathfrak{eq}}}
\newcommand{\Ires}{I_{\mathfrak{res}}}

\newcommand{\etahypbar}{\overline{\etahyp}}
\newcommand{\betan}{\beta_n}

\newcommand{\NsigmaboldnE}{N^{\Sigmaboldn}}
\newcommand{\NVtildeE}{N^{\Vtilden}}
\newcommand{\RT}{\mathbb{RT}_n}
\newcommand{\Qn}{Q_n}

\setcounter{secnumdepth}{4}
\setcounter{tocdepth}{4}

\begin{tiny}
\author{
\normalsize{
}}
\end{tiny}

\date{}

\title{{\textbf{\large{Adaptive virtual element methods with equilibrated fluxes}}}}
\date{}
\author{{F. Dassi \thanks{Dip. di Matematica e Applicazioni,  Universit\`a degli Studi di Milano-Bicocca, Italy (franco.dassi@unimib.it, lorenzo.mascotto@unimib.it)}\quad
J. Gedicke \thanks{Institut f\"ur Numerische Simulation, Universit\"at Bonn, 53115 Bonn (gedicke@ins.uni-bonn.de)} \quad
L. Mascotto\footnotemark[1]\; \thanks{Fakult\"at f\"ur Mathematik, Universit\"at Wien, 1090 Vienna, Austria (lorenzo.mascotto@univie.ac.at)}}}

\begin{document}
\maketitle
\begin{abstract}
\noindent
We present an $\h\p$-adaptive virtual element method (VEM) based on the hypercircle method of Prager and Synge for the approximation of solutions to diffusion problems.
We introduce a reliable and efficient a posteriori error estimator, which is computed by solving an auxiliary global mixed problem.
We show that the mixed VEM satisfies a discrete inf-sup condition with inf-sup constant independent of the discretization parameters.
Furthermore, we construct a stabilization for the mixed VEM with explicit bounds in terms of the local degree of accuracy of the method.
The theoretical results are supported by several numerical experiments, including a comparison with the residual a posteriori error estimator.
The numerics exhibit the $\p$-robustness of the proposed error estimator.
In addition, we provide a first step towards the localized flux reconstruction in the virtual element framework,
which leads to an additional reliable a posteriori error estimator that is computed by solving local (cheap-to-solve and parallelizable) mixed problems.
We provide theoretical and numerical evidence that the proposed local error estimator suffers from a lack of efficiency.
\medskip

\noindent
\textbf{AMS subject classification}: 65N12, 65N30, 65N50.

\medskip\noindent
\textbf{Keywords}: virtual element method, hypercircle method, equilibrated fluxes, $\h\p$-adaptivity, polygonal meshes
\end{abstract}

\section{Introduction}  \label{section:introduction}
Polygonal/polyhedral methods have several advantages over more standard technologies based on triangular/tetrahedral meshes.
For instance, when refining a mesh adaptively, the use of general shaped elements allows for the presence of hanging nodes and interfaces.
This simplifies the construction of hierarchies of meshes.
Amongst the various polytopal methods, the virtual element method (VEM) has received an increasing attention over the last years; see~\cite{VEMvolley}.

Adaptivity in the VEM has been applied to several problems: general elliptic problems in primal~\cite{cangianigeorgulispryersutton_VEMaposteriori, berrone2017residual, ManziniBeirao_VEMresidualaposteriori}
and mixed formulations~\cite{mixedVEMapos}, Steklov eigenvalue problems~\cite{MoraRiveraRodriguez_aposSteklov}, the elasticity equations~\cite{mora2017apost}, the $\h\p$-version of the VEM~\cite{hpVEMapos},
recovery-based VEM~\cite{chi2019simple}, superconvergent gradient recovery~\cite{superconvergentVEM_M3AS}, discrete fracture network flow simulations~\cite{berrone2019reliable},
parabolic problems with moving meshes~\cite{cangiani2020adaptive},
and anisotropic discretizations~\cite{antonietti2020anisotropic, weisser2019anisotropic}.
In all these references, residual error estimators were analyzed,
whereas equilibrated error estimators have not been investigated so far.

The hypercircle method for the computation of equilibrated error estimators was introduced by Prager and Synge in~\cite{prager1947approximations}; see also~\cite{aubin1971some}.
The idea behind it consists in constructing an error estimator based on the approximations of the primal \emph{and} the mixed formulations of the problem.
In the finite element framework, a combined error of the primal and mixed formulations is equal to a term involving the gradient of the solution to the primal method \emph{and} the equilibrated flux solution to the mixed method,
up to oscillation terms.

Braess and Sch\"oberl~\cite{braess2008equilibrated_curl}, and later Ern and Vohral{\'\i}k~\cite{ern2015polynomial} provided a major improvement to the hypercircle method.
They designed an error estimator based on equilibrated fluxes
using the solution to the primal method and a combination of discrete solutions to local (cheap-to-solve and parallelizable) mixed problems.

In the seminal works~\cite{braess2009equilibrated, braess2008equilibrated_curl}, Braess, Sch\"oberl, and collaborators proved that the equilibrated error estimator is reliable and has an efficiency that is independent of the polynomial degree~$\p$.
As discussed by Melenk and Wohlmuth~\cite[Theorem 3.6]{MelenkWohlmuth_hpFEMaposteriori}, this is not the case for the residual error estimator.
The $\p$-robustness of the equilibrated error estimator is proven in the discontinuous Galerkin (dG) setting as well; see, e.g., \cite{ern2015polynomial} and the references therein.
Hence, they are very well suited for $\h\p$-adaptivity; see, e.g., \cite{dolejsi2016hp}.

Local flux reconstruction techniques have been applied to various problems, e.g., parabolic problems~\cite{ErnSmearsVohralik2017parabolic},
reaction diffusion problems~\cite{SmearsVohralik_singularly}, the Helmholtz equation~\cite{ChFreErnVohralik_Helmholtz_equilibrated,congreve2019robust} and magnetostatic problems~\cite{equilibrated_magnetostatic}.
\medskip

This paper represents the first attempt to combine the hypercircle method with the VEM.
In particular, we want to dovetail the geometric flexibility of the VEM with the robustness properties of the hypercircle method and the flux reconstruction.
We analyze the hypercircle method for the~$\h$-, $\p$-, and~$\h\p$-versions of the VEM.

The structure and contents of the paper are as follows.
In Section~\ref{section:VEM}, we introduce the VEM for the primal and mixed formulations of a two dimensional diffusion problem.
Despite the construction of the two methods is well understood~\cite{VEMvolley, Brezzi-Falk-Marini, BBMR_generalsecondorder}, there are two issues that we want to address, which have not been covered in the literature so far.
We show that 
\begin{itemize}
\item the mixed formulation of the VEM satisfies a discrete inf-sup condition with inf-sup constant independent of the degree of accuracy of the method;
\item we construct a stabilization of the mixed VEM with stability bounds that are explicit in terms of such degree of accuracy.
\end{itemize}
In Section~\ref{section:apos}, we introduce an equilibrated error estimator and prove its reliability and efficiency.
Such an error estimator consists of two terms. One is similar to the FEM equilibrated error estimator; the other involves two stabilization terms, typical of the VEM framework.
Due to the several variational crimes of the VEM, we observe a loss of the $\p$-robustness due to the presence of the stabilization terms,
as well as the loss of the ``constant free'' nature of the reliability bound.
Numerical experiments are presented in Section~\ref{section:nr}. Amongst them, we show that the proposed error estimator is $\p$-robust, differently from the residual error estimator of~\cite{hpVEMapos}.
Moreover, we display the performance of the $\h$- and the $\h\p$-hypercircle method based on the Melenk-Wohlmuth's $\h\p$-refining strategy presented in~\cite{MelenkWohlmuth_hpFEMaposteriori}.

In Section~\ref{section:localization-mixed}, we present a first step towards the analysis of the local flux reconstruction in the VEM framework:
we discuss how to design a reliable error estimator using local VE flux reconstructions.
Amongst the various technical tools needed in the analysis, we provide
\begin{itemize}
\item the design of a high-order virtual element partition of unity, which differs from the standard one introduced in~\cite{XVEM_2019, Helmholtz-VEM};
\item the design of local mixed VEMs, satisfying an equilibration condition on fluxes and a residual-type equation.
\end{itemize}
Numerical results with this new error estimator are the topic of Section~\ref{section:nr:local}.
Here, we check that the equilibration condition is fulfilled, which guarantees reliability. However, notably for high-order methods, we have theoretical and numerical evidence that the local error estimator suffers from a lack of efficiency.
This paves the way to other approaches where a deeper analysis on the design of local mixed problems has to be performed.
We draw some conclusions in Section~\ref{section:conclusion}.

\paragraph*{Notation.}
Throughout the paper, we employ a standard notation for Sobolev spaces. Given a measurable open set~$D \subset \mathbb R^2$ and~$s \in \mathbb N$,
$L^2(D)$ and~$H^s(D)$ denote the standard Lebesgue and Sobolev spaces endowed with inner products~$(\cdot, \cdot)_{s,D}$ and seminorms~$\vert \cdot \vert_{s,D}$.
We set the Sobolev norm of order~$s$ as
\[
\Vert \cdot \Vert _{s,D} : =
\Vert \cdot \Vert_{0, D}
+ \sum_{\ell=1}^s \vert \cdot \vert_{\ell, D}.
\]
The case~$s=1$ is special and, when no confusion occurs, we shall write
\[
a^D(\cdot, \cdot) = (\cdot, \cdot)_{1,D}
:= (\nabla \cdot,\nabla\cdot)_{0,D}.
\]
We define fractional order Sobolev spaces via interpolation theory~\cite{Triebel},
whereas we define Sobolev negative order spaces by duality as
\[
H^{-1}(D) := [H^1_0(D)]^*, \quad \quad H^{-\frac{1}{2}} (\partial D) := [H^{\frac{1}{2}}(\partial D)]^*,
\]
and endow them with the norms
\begin{equation} \label{negative:norms}
\Vert v \Vert_{-1,D} := \sup_{w \in H^1_0(D),\, w \ne 0} \frac{(v,w)_{0,D}}{\vert w \vert_{1,D}},\quad \quad \Vert v \Vert_{-\frac{1}{2}, \partial D} := \sup_{w \in H^{\frac{1}{2}}(\partial D),\, w \ne 0 } \frac{(v,w)_{0,\partial D}}{\Vert w \Vert _{\frac{1}{2}, \partial D}}.
\end{equation}
Recall the definition of the differential operators
\[
\div = \partial _x +\partial_y, \quad \quad \quad \rot = \partial_y - \partial_x ,
\]
and introduce the~$H(\div)$ and~$H(\curl)$ spaces
\[H(\div,D) := \left \{ \taubold \in [L^2(D)]^2  \mid \div \taubold \in L^2(D)  \right \}, \quad H(\rot,D) := \left \{ \taubold \in [L^2(D)]^2  \mid \rot \taubold \in L^2(D)    \right \}.
\]
For all~$\ell \in \mathbb N$, $\mathbb P_\ell(D)$ denotes the space of polynomials of degree at most~$\ell$ over~$D$.
We shall employ the multi-indices~$\boldalpha \in \mathbb N^2$ to describe the basis elements of~$\mathbb P_\ell (D)$. To this purpose, we shall use the natural bijection~$\mathbb N \leftrightarrow \mathbb N_0^2$ given by
\begin{equation} \label{bijection:multi}
1\leftrightarrow (0,0), \quad 2\leftrightarrow (1,0), \quad 3 \leftrightarrow (0,1),\quad 4 \leftrightarrow (2,0), \quad 5 \leftrightarrow (1,1), \quad 6 \leftrightarrow (0,2), \quad \dots
\end{equation}
As a matter of style, we employ the following notation.
Given two positive quantities~$a$ and~$b$, we write~$a \lesssim b$ if there exists a positive constant~$C$ such that~$a \le C \, b$.
We write~$a \approx b$ if~$a \lesssim b$ and~$b \lesssim a$ are valid at the same time.

\paragraph*{The model problem.}
Let~$\Omega \subset \mathbb R^2$ be a polygonal domain with boundary~$\partial \Omega$ split into~$\partial\Omega = \GammaD \cup \GammaN$ with~$\GammaD \ne \emptyset$ and~$\GammaD \cap \GammaN = \emptyset$.
Denote the outward unit normal vector on~$\GammaN$ by~$\n$. Let~$\k$ be a smooth scalar function such that there exist two positive constants~$k_* < k^*$ satisfying
\begin{equation} \label{ass:k}
0 < k_*  \le  \k(\xbold)  \le k^* < +\infty \quad\quad \text{for almost all } \xbold \in \Omega.
\end{equation}
\emph{The primal formulation.}
Let $\f \in L^2(\Omega)$, $\gD\in H^{\frac{1}{2}}(\GammaD)$, and~$\gN\in H^{-\frac{1}{2}}(\GammaN)$.
We aim to approximate the solution to the problem: find~$\utilde$ such that
\begin{equation} \label{strong:primal}
\begin{cases}
-\div (\k\, \nabla \utilde) = \f & \text{in } \Omega\\
\utilde = \gD 			& \text{on } \GammaD \\
\n \cdot (\k \nabla \utilde) = \gN	& \text{on } \GammaN. \\
\end{cases}
\end{equation}
Define the spaces
\[
\VtildegD := \{ \vtilde \in H^1(\Omega)\mid \vtilde = \gD \text{ on } \GammaD   \} ,	\quad\quad  \Vtildez:= \{ \vtilde \in H^1(\Omega) \mid \vtilde = 0 \text{ on } \GammaD   \} ,
\]
and the bilinear form
\begin{equation} \label{notation:strong}
\atilde (\utilde, \vtilde) := (\k\nabla \utilde, \nabla \vtilde)_{0,\Omega} \quad\forall \utilde,\,  \vtilde  \in H^1(\Omega). 
\end{equation}
The weak formulation of problem~\eqref{strong:primal} reads
\begin{equation} \label{primal:formulation}
\begin{cases}
\text{find } \utilde \in \VtildegD \text{ such that}\\
\atilde(\utilde, \vtilde) = (\f, \vtilde)_{0,\Omega} + (\gN, \vtilde)_{0,\GammaN}   \quad \forall v\in \Vtildez.
\end{cases}
\end{equation}
The term~$(\gN, \vtilde)_{0,\GammaN}$ has to be understood as a duality pairing between~$H^{-\frac{1}{2}}(\GammaN)$ and~$H^{\frac{1}{2}}(\GammaN)$.
\medskip

\noindent
\emph{The mixed formulation.}
Define the spaces
\[
\begin{split}
&\SigmaboldgN := \{ \taubold \in \Hdiv \mid \n \cdot \taubold = \gN \text{ on } \GammaN \}, \\
& \Sigmaboldz := \{ \taubold \in \Hdiv \mid \n \cdot \taubold = 0 \text{ on } \GammaN \} ,\quad\quad \V: = L^2(\Omega) ,\\
\end{split}
\]
and the bilinear forms
\begin{equation} \label{bilinear:b}
\begin{split}
& \a(\sigmabold, \taubold) := \int_\Omega \k^{-1} \sigmabold \cdot \taubold \quad \forall \sigmabold,\, \taubold \in \Hdiv, \\
&  \b(\taubold, v) := -\int_\Omega \div(\taubold) v\quad \forall \taubold \in \Hdiv,\, \forall v \in \V.
\end{split}
\end{equation}
We point out that~$\VtildegD$ and~$\SigmaboldgN$ are not vector spaces in general.

Consider the mixed formulation of problem~\eqref{primal:formulation}. In strong formulation, it consists in finding~$\sigmabold$ and~$u$ such that
\[
\begin{cases}
\k^{-1}\sigmabold=-\nabla u 	& \text{in } \Omega\\
\div \sigmabold = \f 			& \text{in } \Omega\\
u = \gD 					& \text{on } \GammaD\\
\n\cdot \sigmabold = -\gN			& \text{on } \GammaN,\\
\end{cases}
\]
whereas, in weak formulation, it reads
\begin{equation} \label{mixed:formulation}
\begin{cases}
\text{find } (\sigmabold, u) \in \SigmaboldgN \times \V \text{ such that}\\ 
\a(\sigmabold, \taubold)  +\b(\taubold, u) = -(\gD, \n \cdot \taubold)_{0,\GammaD} \quad \forall \taubold\in \Sigmaboldz \\
\b(\sigmabold, v) = (-\f,v)_{0,\Omega} \quad \forall v\in \V.
\end{cases}
\end{equation}
The term~$(\gD,  \n \cdot \taubold)_{0,\GammaD}$ has to be understood as a duality pairing between~$H^{\frac{1}{2}}(\GammaD)$ and~$H^{-\frac{1}{2}}(\GammaD)$.

The well-posedness of~\eqref{primal:formulation} and~\eqref{mixed:formulation} is a consequence of a lifting argument, and the Lax-Milgram lemma and the standard inf-sup theory, respectively; see, e.g., \cite{BrezziFortin}.

\begin{remark}
Formulations~\eqref{primal:formulation} and~\eqref{mixed:formulation} are equivalent.
Moreover, given their solutions~$\utilde$ and~$\sigmabold$, the following identity is valid:
\begin{equation} \label{strong:identity}
\nabla \widetilde u = -\k^{-1} \sigmabold.
\end{equation}
\end{remark}

\begin{remark} \label{remark:tilde}
As a matter of style, we shall employ the following notation in the remainder of the paper.
We shall use a~$\sim$ whenever referring to functions, spaces, etc. associated with the primal formulation.
No~$\sim$ is employed as for the mixed formulation.
\end{remark}

\section{Virtual element discretization} \label{section:VEM}
In this section, we introduce the VEM for both the primal~\eqref{primal:formulation} and the mixed~\eqref{mixed:formulation} formulations.
More precisely, we introduce the notation and certain assumptions for the polygonal meshes and the data of the problems in Section~\ref{subsection:meshes}.
Next, we describe the virtual element methods for the discretization of the primal~\eqref{primal:formulation} and mixed~\eqref{mixed:formulation} formulations in Sections~\ref{section:VEM:primal} and~\ref{section:VEM:mixed}, respectively.
Section~\ref{subsection:stabilizations} deals with the construction of explicit stabilizations, whereas we prove the well-posedness of the two methods in Section~\ref{subsection:well}.

\subsection{Polygonal meshes} \label{subsection:meshes}
Consider~$\{\taun\}_{n \in \mathbb N}$ a sequence of decompositions of~$\Omega$ into polygons with straight edges.
Hanging nodes are dealt with as standard nodes.
Given a mesh~$\taun$, we denote its set of vertices, boundary vertices, and internal vertices by~$\Nun$, $\NunB$, and~$\NunI$, respectively.
Furthermore, we denote its set of edges, boundary edges, and internal edges by~$\En$, $\EnB$, and~$\EnI$, respectively.

With each~$\E \in \taun$, we associate~$\NuE$ its set of vertices and~$\EE$ its set of edges.
We denote  its diameter and its centroid by~$\hE$ and~$\xE$,
whereas~$\nE$ denotes the outward pointing, normal vector of~$\E$.
Finally, for all edges~$\e \in \En$, we fix once and for all~$\n_\e$, the unit normal vector associated with~$\e$ and denote the length of~$\e$ by~$\he$.

For all~$n \in \mathbb N$, $\taun$ is a conforming polygonal decomposition, i.e., every internal edge belongs to the intersection of the boundary of two neighbouring elements.
Also, $\taun$ is conforming with respect to the Dirichlet and Neumann boundaries.
In other words, for all boundary edges~$\e \in \EnB$, either~$\e \subset \GammaD$ or~$\e \subset \GammaN$.

We assume the following properties on the meshes and the data of problems~\eqref{primal:formulation} and~\eqref{mixed:formulation}: for all~$n \in \mathbb N$,
\begin{itemize}
\item[(\textbf{G1})] every~$\E \in \taun$ is star-shaped with respect to a ball of radius greater than or equal to~$\gamma \hE$, for a positive constant~$\gamma$;
\item[(\textbf{G2})] given~$\E \in \taun$, for all its edges~$\e \in \EE$, $\hE$ is smaller than or equal to~$\gammatilde \he$, for a positive constant~$\gammatilde$;
\item[(\textbf{K})] the diffusion parameter~$\k$ is piecewise constant over $\{\taun\}_n$;
\item[(\textbf{D})] the boundary data~$\gD$ and~$\gN$ are piecewise polynomials of a given degree~$\p \in \mathbb N$ over~$\GammaD$ and~$\GammaN$.
\end{itemize}
We employ the assumptions~(\textbf{G1}) and~(\textbf{G2}) in the analysis of the forthcoming sections.
Instead, we call for the assumptions~(\textbf{K}) and~(\textbf{D}) to simplify the analysis.
When no confusion occurs, we denote the piecewise divergence operator over~$\taun$ by~$\div$.

Assumptions (\textbf{G1})--(\textbf{G2}) can be generalized \cite{cao2018anisotropic, brennerVEMsmall, beiraolovadinarusso_stabilityVEM} to the case of small edges and anisotropic elements.
We stick to the current setting for the sake of simplicity.
Further, to prove explicit stability bounds, see Proposition~\ref{proposition:stab:mixed} below, we shall also demand the convexity of the elements.

\subsection{Virtual elements for the primal formulation} \label{section:VEM:primal}
Here, we introduce the virtual element discretization of problem~\eqref{primal:formulation}, mimicking what is done in~\cite{BBMR_generalsecondorder}.
We assume that degree of accuracy~$\p \in \mathbb N$ is uniform over all the elements; see Remark~\ref{remark:hp:primal} below for the variable degree case.

\paragraph*{Virtual element spaces.}
Given an element~$\E \in \taun$, we define the local nodal virtual element space on~$\E$ as
\[
\VtildenE :=\{ \vtilden \in C^0(\overline \E) \mid \vtilden{}_{|\e} \in \mathbb P_\p(\e) \text{ for all } \e\in \EE, \quad \Delta \vtilden \in \mathbb P_{\p-2}(\E)  \}.
\]
We observe that~$\mathbb P_\p(\E) \subseteq \Vtilden$.
Further, the functions in~$\Vtilden$ are available in closed form on~$\partial \E$ but not in~$\E$.

Given the multi-indices~$\boldalpha$ as in bijection~\eqref{bijection:multi}, let~$\{\malphaE\}_{\vert \boldalpha \vert = 0} ^{\p-2}$ be any basis of~$\mathbb P_{\p-2}(\E)$.
We assume that the basis elements~$\malphaE$ are invariant with respect to dilations and translations\footnote{Here and in what follows, invariance with respect to dilations and translations means that we consider polynomials that are shifted and scaled with respect to the barycenter and diameter of the element.}.
For all~$\vtilden \in \VtildenE$, consider the following set of linear functionals:
\begin{itemize}
\item the point values of~$\vtilden$ at the vertices of~$\E$;
\item the point values of~$\vtilden$ at the~$\p-1$ internal Gauss-Lobatto nodes of each  edge of~$\E$;
\item (scaled) moments
\begin{equation} \label{internal:moments:primal}
\frac{1}{\vert \E \vert} \int_\E \vtilden\, \malphaE \quad \quad \forall \vert \boldalpha \vert =0,\dots,\p-2.
\end{equation}
We are going to discuss possible choices of the polynomial basis in Sections~\ref{section:nr} and~\ref{section:nr:local}.
\end{itemize}
\begin{prop} 
For all~$\E \in \taun$, the above set of linear functionals is a set of unisolvent degrees of freedom for~$\VtildenE$.
\end{prop}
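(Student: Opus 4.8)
The plan is to prove unisolvence by the standard two-step argument: first check that the number of proposed linear functionals equals $\dim \VtildenE$, and then show that a function $\vtilden \in \VtildenE$ annihilated by all of them is necessarily zero. Since both the dimension count and the vanishing argument hinge on controlling $\vtilden$ on $\partial\E$ and on the Laplacian $\Delta\vtilden$, I would organize everything around the boundary/interior splitting built into the definition of the space.

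\emph{Dimension count.} First I would compute $\dim \VtildenE$. A function in $\VtildenE$ is determined by its trace on $\partial\E$ — which, being a continuous piecewise polynomial of degree $\le \p$ on the $\#\EE$ edges, lives in a space of dimension $\p \cdot \#\NuE$ (each edge contributes $\p+1$ coefficients, but the $\#\NuE$ vertices are shared, giving $(\p+1)\#\EE - \#\NuE = \p\,\#\NuE$) — together with the choice of $\Delta\vtilden \in \mathbb P_{\p-2}(\E)$, which has dimension $\binom{\p}{2} = \tfrac{\p(\p-1)}{2}$. Indeed, given any boundary datum $g \in C^0(\partial\E)$ that is edgewise in $\mathbb P_\p$ and any polynomial $\phi \in \mathbb P_{\p-2}(\E)$, the Dirichlet problem $\Delta\vtilden = \phi$ in $\E$, $\vtilden = g$ on $\partial\E$ has a unique solution, so the map $\vtilden \mapsto (\vtilden|_{\partial\E}, \Delta\vtilden)$ is a linear isomorphism onto the product of these two spaces. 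Hence $\dim\VtildenE = \p\,\#\NuE + \tfrac{\p(\p-1)}{2}$. On the other side, the proposed functionals number $\#\NuE$ (vertex values) $+ (\p-1)\#\EE = (\p-1)\#\NuE$ (internal edge nodes) $+ \#\{\boldalpha : |\boldalpha| \le \p-2\} = \tfrac{\p(\p-1)}{2}$ (internal moments), which sums to exactly $\p\,\#\NuE + \tfrac{\p(\p-1)}{2}$. The counts match.

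\emph{Injectivity.} It then suffices to show that if $\vtilden \in \VtildenE$ has all vertex values, all edge Gauss–Lobatto values, and all moments against $\malphaE$ equal to zero, then $\vtilden \equiv 0$. On each edge $\e$, $\vtilden|_\e$ is a polynomial of degree $\le \p$ vanishing at the two endpoints and at the $\p-1$ internal Gauss–Lobatto nodes — that is $\p+1$ distinct roots — so $\vtilden|_\e \equiv 0$; since this holds on every edge, $\vtilden = 0$ on $\partial\E$. Now set $\phi := \Delta\vtilden \in \mathbb P_{\p-2}(\E)$. Integrating by parts and using $\vtilden = 0$ on $\partial\E$,
\[
\int_\E |\nabla\vtilden|^2 = -\int_\E \vtilden\, \Delta\vtilden = -\int_\E \vtilden\, \phi = 0,
\]
where the last equality holds because $\phi$ is a linear combination of the basis elements $\malphaE$ (up to the scaling factor $1/|\E|$), and the moments~\eqref{internal:moments:primal} of $\vtilden$ against each $\malphaE$ vanish. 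Therefore $\nabla\vtilden = 0$, so $\vtilden$ is constant on $\E$, and since it vanishes on $\partial\E$ it is identically zero.

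I expect the only genuinely delicate point to be the dimension count — specifically the justification that $\vtilden \mapsto (\vtilden|_{\partial\E}, \Delta\vtilden)$ is an isomorphism, which relies on well-posedness of the Poisson problem on the star-shaped domain $\E$ (guaranteed under assumption (\textbf{G1})) and on the observation that the boundary trace space of $\VtildenE$ is precisely the space of continuous edgewise-$\mathbb P_\p$ functions. The edgewise root-counting and the integration-by-parts step are routine; the scaling/invariance hypotheses on $\{\malphaE\}$ play no role in unisolvence and are there only for later quantitative estimates. One should also note that the duality pairing issues at the boundary are absent here since everything is on a single polygon with polynomial data.
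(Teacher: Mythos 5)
Your proof is correct and is essentially the standard argument that the paper delegates to \cite{BBMR_generalsecondorder}: count the functionals against $\dim\VtildenE$ via the isomorphism $\vtilden \mapsto (\vtilden|_{\partial\E},\Delta\vtilden)$, then kill the trace edge by edge through root counting and the interior part through integration by parts against the vanishing moments. The only cosmetic remark is that well-posedness of the auxiliary Dirichlet problem needs only that $\E$ is a Lipschitz polygon, not assumption (\textbf{G1}).
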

\begin{proof}
See~\cite[Section~4.1]{BBMR_generalsecondorder}.
\end{proof}
The global virtual element space with no boundary conditions is obtained by merging the local spaces continuously:
\[
\Vtilden := \left \{ \vtilden \in \mathcal C^0(\overline\Omega) \mid \vtilden{}_{|\E}  \in \VtildenE \; \forall \E \in \taun \right\}.
\]
We incorporate the Dirichlet boundary conditions in the space by imposing the degrees of freedom associated with the edges in~$\GammaD$.
We define the discrete trial and test spaces
\[
\begin{split}
\VtildengD 	& := \left\{ \vtilden \in \Vtilden \mid \vn= \gD \text{ on }\e \text{ if } \e \subset \GammaD    \right\}, \\
\Vtildenz    	& := \left\{ \vtilden \in \Vtilden \mid \vn = 0 \text{ on }\e \text{ if } \e \subset \GammaD   \right\}. \\
\end{split}
\]
We associate with each global space a set of unisolvent degrees of freedom, obtained by an $H^1$ conforming coupling of their local counterparts.

\paragraph*{Projectors.}
For all~$\E \in \taun$, by means of the degrees of freedom, we can compute the local $H^1$ projector $\Pinablap : \VtildenE \rightarrow \mathbb P_\p(\E)$ defined as
\begin{equation} \label{projection:nabla}
\atildeE(\vtilden - \Pinablap \vtilden, \qp) = 0 \quad \forall \qp \in \mathbb P_\p(\E),\quad \quad \int_{\partial \E} (\vtilden - \Pinablap\vtilden) =0 \quad \forall \vtilden \in \Vtilden.
\end{equation}
Furthermore, for all~$\E \in \taun$ and~$\p\ge 2$, we introduce the local~$L^2(\E)$ projector $\Pitildez : \Vtilden(\E) \rightarrow \mathbb P_{\p-2}(\E)$ defined as
\[
(\vtilden - \Pitildez \vtilden, \qpmt)_{0,\E} = 0\quad \quad \forall \vtilden\in \VtildenE,\, \forall \qpmt \in \mathbb P_{\p-2}(\E).
\]
This projector is computable via the bubble degrees of freedom~\eqref{internal:moments:primal}. 

\paragraph*{Discrete bilinear forms and right-hand side.}
The functions in~$\Vtilden$ are known on the skeleton of the mesh only.
Consequently, for all~$\utilden$ and~$\vtilden$ in~$\Vtilden$, it is not possible to compute the bilinear form~$\atilde(\utilden,\vtilden)$~\eqref{notation:strong} explicitly.
Thence, following the VEM gospel~\cite{BBMR_generalsecondorder}, we split the global bilinear form~$\atilde(\cdot, \cdot)$ into a sum of local contributions:
\[
\atilde(\utilde, \vtilde) = \sum_{\E \in \taun} \int_\E \k  \nabla \utilde \cdot \nabla \vtilde =: \sum_{\E \in \taun} \atildeE (\utilde_{|\E}, \vtilde_{|\E})  \quad\quad \forall \utilde, \, \vtilde \in \Vtilde.
\]
We allow for the following variational crime in the design of the local bilinear form.
Let~$\StildeE : \ker(\Pinablap) \times \ker(\Pinablap) \rightarrow \mathbb R$ be any bilinear form computable via the degrees of freedom and satisfying
\begin{equation} \label{local_stab:primal}
\alphatilde_* \atildeE( \vtilden, \vtilden) \le \StildeE(\vn,\vn) \le \alphatilde^* \atildeE( \vtilden, \vtilden) \quad\quad \forall \E \in \taun, \, \forall \vtilden \in \ker (\Pinablap).
\end{equation}
The constants~$0< \alphatilde_* \le \alphatilde^*< +\infty$ depend possibly on the geometry of the polygonal decomposition through the parameter~$\gamma$ in the assumptions (\textbf{G1}) and (\textbf{G2}), the degree of accuracy~$\p$, and~$\k$, but must be independent of~$\hE$.

Introducing the local discrete bilinear forms
\[
\atildenE(\utilden, \vtilden) := (\k \nabla \Pinablap \utilden, \nabla \Pinablap  \vtilden) + \StildeE( (I-\Pinablap) \utilden, (I-\Pinablap) \vtilden )  \quad\quad \forall \utilden,\, \vtilden\in \VtildenE,
\]
we define the global discrete bilinear form
\[
\atilden(\utilden, \vtilden) := \sum_{\E \in \taun} \atildenE(\utilden{}_{|\E}, \vtilden{}_{|\E})	 		\quad\quad \forall \utilden,\, \vtilden \in \Vtilden.
\]
As discussed in~\cite{VEMvolley}, the bilinear form~$\atilden(\cdot, \cdot)$ is coercive and continuous with constants $\min(k_*, \min_{\E \in \taun} \alphatilde_*)$ and~$\max(k^*, \max_{\E \in \taun} \alphatilde^*)$.
Explicit choices of the stabilization in~\eqref{local_stab:primal} are detailed in Section~\ref{subsection:stabilizations}.

As for the treatment of the right-hand side in the case~$\p=1$, we refer the reader to~\cite{VEMvolley} for details,
whilst, for~$\p \ge 2$, we approximate the right-hand side~$(\f,\vtilden)_{0,\Omega}$ perpetrating the following variational crime:
\[
\langle \f, \vtilden \rangle_n : = \sum_{\E \in \taun} \langle \f, \vtilden{}_{|\E} \rangle_{n,\E} := \sum_{\E \in \taun} (\f, \Pitildez \vtilden{}_{|\E})_{0,\E} \quad\quad \forall \vtilden \in \Vtilden.
\]

\paragraph*{The virtual element method for the primal formulation.}
The virtual element method tailored for the approximation of the problem in primal formulation~\eqref{primal:formulation} reads
\begin{equation} \label{VEM:primal}
\begin{cases}
\text{find } \utilden \in \VtildengD \text{ such that}\\
\atilden(\utilden, v) = \langle \f, \vtilden\rangle_{n} + (\gN, \vtilden)_{0,\GammaN} \quad \forall \vtilden \in \Vtildenz.
\end{cases}
\end{equation}

\begin{remark}  \label{remark:hp:primal}
So far, we have discussed the construction of virtual elements with uniform degree of accuracy over all the elements.
Indeed, the flexibility of the virtual element framework allows for the construction of global spaces with variable degrees of accuracy.
Let~$\taun$ be a mesh with~$\NumEl$ elements.
Consider~$\pbold \in \mathbb N^{\NumEl}$ and associate with each~$\E \in \taun$ a degree of accuracy~$\pE$.
To each internal edge, we associate the maximum of the degrees of accuracy of the two neighbouring elements, whereas, to each boundary edge, we associate the degree of accuracy of the only neighbouring element.
The definition and cardinality of the bulk and edge degrees of freedom is performed accordingly.
We refer to~\cite[Section 3]{hpVEMcorner} for a thorough presentation of the variable degree virtual element spaces case.
\eremk
\end{remark}

\subsection{Virtual elements for the mixed formulation} \label{section:VEM:mixed}
In this section, we discuss the virtual element discretization of the mixed formulation~\eqref{mixed:formulation}; see also~\cite{Brezzi-Falk-Marini}.
For the case of piecewise analytic diffusivity tensor~$\k$, we refer the reader to~\cite{BBMR_generalsecondorder, HdivHcurlVEM}.
We consider a uniform degree of accuracy~$\p \in \mathbb N$ over all the elements; see Remark~\ref{remark:hp:mixed} below for the variable degree of accuracy case.

\paragraph*{Virtual element spaces.}
We define the virtual element spaces for both the primal and the flux variables.
As for the primal variable space, we define~$\Vn \subset \V$ as the space of piecewise discontinuous polynomials of degree~$\p-1$ over~$\taun$, i.e.,
\[
\Vn :=  \mathcal S^{\p-1, -1} (\Omega, \taun) .
\]
Given the multi-indices~$\boldalpha$ as in bijection~\eqref{bijection:multi},
let~$\{\malphaE \}_{\vert \boldsymbol \alpha \vert=0}^{\p-1}$ be any basis of~$\mathbb P_{\p-1}(\E)$.
We assume that the basis elements~$\malphaE$ are invariant with respect to dilations and translations.
A set of unisolvent degrees of freedom is provided by scaled moments: given~$\vn\in \VnE$,
\[
\frac{1}{\vert \E \vert} \int_\E \vn \malphaE \quad \quad \forall \vert \boldalpha \vert=0,\dots,\p-1.
\]
The construction of the flux spaces is as follows. On each element~$\E$, we define the local space
\begin{equation} \label{local:mixed:VES}
\begin{split}
\SigmaboldnE := \big\{ \tauboldn \in \HdivE  \cap \HcurlE \mid  	& \n \cdot \tauboldn{}_{|\e} \in \mathbb P_\p(\e) \text{ for all } \e\in \EE, \\
				&  \div \tauboldn \in \mathbb P_{\p-1}(\E), \; \rot \tauboldn \in \mathbb P_{\p-1}(\E)    \big\}.  \\
\end{split}
\end{equation}
Observe that~$[\mathbb P_{\p}(\E)]^2 \subseteq \SigmaboldnE$ and that the functions in~$\SigmaboldnE$ are neither known in closed form on~$\partial \E$ nor inside the element~$\E$.

For all~$\p \in \mathbb N$, define
\[
\Gcal_{\p}(\E) := \nabla \mathbb P_{\p + 1}(\E).
\]
For all $\E \in \taun$, let~$\{ \malphaboldE \}_{\alpha =1 }^{\dim(\Gcal_\p(\E))}$ be a basis of~$\Gcal_\p(\E)$.
We assume that the basis elements~$\malphaboldE$ are invariant with respect to dilations and translations.
An explicit construction of the basis for the polynomial space~$\Gcal_{\p}(\E)$ can be found, e.g., in~\cite[Proposition 2.1]{VEMbricksMixed}.
Further, introduce $\{ \malphaE  \}_{\vert \boldalpha \vert =0}^{\p}$, a basis of~$\mathbb P_{\p}(\E)$.
We assume that the basis elements are invariant with respect to dilations and translations.

Consider the following set of linear functionals: given~$\tauboldn \in \SigmaboldnE$,
\begin{itemize}
\item for all edges~$\e \in \EE$, the evaluation at the $\p+1$ Gauss nodes~$\{\nu_j^\e\}_{j=0}^\p$ of~$\e$
\begin{equation} \label{edgeDOF:flux}
(\n \cdot \tauboldn) (\nu_j) \quad\quad \forall j = 0,\dots,\p;
\end{equation}
\item the gradient-like moments
\begin{equation} \label{gradientDOF:flux}
\frac{1}{\vert \E \vert}  \int_\E \tauboldn \cdot \malphaboldE \quad\quad \forall \alpha = 1, \dots, \dim(\Gcal_{\p-2}(\E));
\end{equation}
\item the rotor-like moments
\begin{equation} \label{orthogonalDOF:flux}
\frac{\hE}{\vert \E \vert}  \int_\E \rot(\tauboldn) \malphaE \quad\quad \forall \vert \boldalpha \vert = 0,\dots , \p-1.
\end{equation}
\end{itemize}

\begin{prop} \label{prop:unisolvency}
The set of linear functionals in~\eqref{edgeDOF:flux}--\eqref{orthogonalDOF:flux} is a set of unisolvent degrees of freedom for~$\SigmaboldnE$, for all~$\E \in \taun$.
\end{prop}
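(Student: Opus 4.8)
The plan is to establish the two facts that together yield unisolvency: that the cardinality $N$ of the set of functionals~\eqref{edgeDOF:flux}--\eqref{orthogonalDOF:flux} equals $\dim \SigmaboldnE$, and that $\mathbf 0$ is the only $\tauboldn \in \SigmaboldnE$ annihilated by all of them; together these say that the functional map $\SigmaboldnE \to \mathbb R^N$ is an isomorphism. The counting is immediate: \eqref{edgeDOF:flux} contributes $(\p+1)\,\#\EE$ functionals, \eqref{gradientDOF:flux} contributes $\dim(\Gcal_{\p-2}(\E)) = \dim \mathbb P_{\p-1}(\E)-1$ (recall $\Gcal_{\p-2}(\E) = \nabla \mathbb P_{\p-1}(\E)$), and \eqref{orthogonalDOF:flux} contributes $\dim \mathbb P_{\p-1}(\E)$, so that $N = (\p+1)\,\#\EE + 2\dim \mathbb P_{\p-1}(\E) - 1$.

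To compute $\dim \SigmaboldnE$ I would analyse the linear map $T \colon \SigmaboldnE \to \mathbb P_{\p-1}(\E) \times \mathbb P_{\p-1}(\E) \times \prod_{\e \in \EE} \mathbb P_\p(\e)$ defined by $T(\tauboldn) := \big( \div \tauboldn,\ \rot \tauboldn,\ (\n \cdot \tauboldn{}_{|\e})_{\e \in \EE} \big)$. It is injective by the harmonic-function argument used in the vanishing step below. Its range is exactly the hyperplane $\{(r,s,(g_\e)_\e) : \int_\E r = \sum_{\e \in \EE} \int_\e g_\e\}$: the condition is necessary by the divergence theorem, and sufficient because, given such a triple, one solves the (compatible) Neumann problem $\Delta \phi = r$ in $\E$, $\n \cdot \nabla \phi = g_\e$ on each $\e$, and a homogeneous Dirichlet Laplace problem for $\xi$ so that its rotated gradient $\rotbold \xi$ has $\div(\rotbold \xi) = 0$, $\rot(\rotbold \xi) = s$, $\n \cdot \rotbold \xi = 0$ on $\partial \E$; then $\tauboldn := \nabla \phi + \rotbold \xi \in \SigmaboldnE$ and $T(\tauboldn) = (r,s,(g_\e)_\e)$. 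Hence $\dim \SigmaboldnE = \dim T(\SigmaboldnE) = 2\dim \mathbb P_{\p-1}(\E) + (\p+1)\,\#\EE - 1 = N$.

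For the vanishing step, let $\tauboldn \in \SigmaboldnE$ be annihilated by all the functionals. On each $\e \in \EE$, $\n \cdot \tauboldn{}_{|\e} \in \mathbb P_\p(\e)$ vanishes at the $\p+1$ Gauss nodes by~\eqref{edgeDOF:flux}, hence identically, so $\n \cdot \tauboldn = 0$ on $\partial \E$. Since $\rot \tauboldn \in \mathbb P_{\p-1}(\E)$ and \eqref{orthogonalDOF:flux} tests it against a basis of $\mathbb P_{\p-1}(\E)$, it follows that $\rot \tauboldn = 0$, and $\E$ being simply connected by~(\textbf{G1}) we may write $\tauboldn = \nabla \phi$ with $\phi \in H^1(\E)$. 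For any $q \in \mathbb P_{\p-1}(\E)$, integration by parts together with $\n \cdot \tauboldn = 0$ on $\partial \E$ gives $\int_\E (\div \tauboldn)\, q = -\int_\E \tauboldn \cdot \nabla q$, whose right-hand side vanishes for non-constant $q$ by~\eqref{gradientDOF:flux} (testing $\tauboldn$ against a basis of $\Gcal_{\p-2}(\E) = \nabla \mathbb P_{\p-1}(\E)$) and for constant $q$ by the divergence theorem together with $\n \cdot \tauboldn = 0$. As $\div \tauboldn \in \mathbb P_{\p-1}(\E)$, this forces $\div \tauboldn = 0$, so $\phi$ is harmonic with vanishing Neumann datum, hence constant and $\tauboldn = \mathbf 0$. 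Combined with $\dim \SigmaboldnE = N$, this proves unisolvency.

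The counting and the vanishing chain of implications are routine; the step requiring genuine care is the surjectivity of $T$, which rests on the solvability of the auxiliary Neumann and Dirichlet Laplace problems on the possibly non-convex polygon $\E$ and on a clean edgewise reading of the normal and tangential traces at the corners of $\partial \E$. These are precisely the technical ingredients already developed for mixed virtual element spaces in~\cite{Brezzi-Falk-Marini, VEMbricksMixed, HdivHcurlVEM}, and the corresponding arguments carry over to the present space without essential modification.
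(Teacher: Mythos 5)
Your proof is correct and follows essentially the same route as the paper: the paper likewise reduces unisolvency to the matching of dimensions plus the kernel argument, showing that a function annihilated by all the functionals has vanishing normal trace, divergence (via integration by parts against the gradient moments), and rotor, and hence solves a div--rot problem with zero data. The only difference is that you additionally prove the dimension identity $\dim\Sigmaboldn(\E)=N$ via the trace-and-source map $T$, whereas the paper simply asserts it; your argument for that step is the standard one and is sound.
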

\begin{proof}
Since the dimension of~$\SigmaboldnE$ equals the number of linear functionals~\eqref{edgeDOF:flux}--\eqref{orthogonalDOF:flux},
it is enough to show that given~$\tauboldn \in \Sigmaboldn$ with functionals equals to zero is zero. In particular, we show that~$\tauboldn$ solves a div-rot problem with zero data.

Clearly, we have~$\tauboldn{}_{|\partial \E} =0$, thanks to the boundary degrees of freedom~\eqref{edgeDOF:flux}.
Moreover, $\div \tauboldn = 0$. In fact, thanks to the definition of the degrees of freedom~\eqref{edgeDOF:flux}--\eqref{gradientDOF:flux}, an integration by parts yields
\begin{equation} \label{divergence:explicit}
\int_\E \div(\tauboldn) \qpmo = -\int_\E \tauboldn \cdot \nabla \qpmo + \int_{\partial \E} \taubold\cdot \n \, \qpmo = 0  \quad \quad \forall \qpmo \in \mathbb P_{\p-1}(\E).
\end{equation}
Eventually, thanks to rotor moments~\eqref{orthogonalDOF:flux}, we get~$\rot \tauboldn = 0$.
\end{proof}
Define the jump operator~$\llbracket \cdot \rrbracket _{\e}$ across an edge~$\e$ as follows.
If~$\e$ is an internal edge shared by the elements~$\E_1$ and~$\E_2$ with outward unit normal vectors~$\n_{\E_1}$ and~$\n_{\E_2}$, respectively, and given~$\n_\e$ the global unit normal vector associated with~$\e$,
set
\[
\llbracket \sigmaboldn \rrbracket _{\e} = (\n_\e\cdot \n_{\E_1})\n_\e \cdot \sigmabold_{\E_1} + (\n_\e\cdot \n_{\E_2})  \n_\e \cdot \sigmabold_{\E_2}.
\]
Instead, if~$\e$ is a boundary edge, set~$\llbracket \sigmaboldn \rrbracket_\e = \n_\e \cdot \sigmaboldn $.
For all~$\E\in \taun$ and~$\e \in \EE$, we have that $\n_\e\cdot\nE{} = \pm 1$ depending on the choice of~$\n_\e$.

We define the global space~$\Sigmaboldn$ without boundary conditions by coupling the normal components at the internal interfaces between elements sharing an edge:
\[
\Sigmaboldn := \left \{  \sigmaboldn \in H(\div, \Omega) \mid \sigmaboldn{}_{|\E} \in \Sigmaboldn(\E) \; \forall \E \in \taun, \quad \llbracket \sigmaboldn  \rrbracket _{\e} =0\; \forall \e \in  \EnI    \right \}.
\]
We incorporate the boundary condition~$\gN$ in the space by imposing the degrees of freedom associated with the edges on the Neumann part of the boundary~$\GammaN$.
We set the discrete trial and test spaces for the fluxes space as
\[
\begin{split}
\SigmaboldngN 	& := \left\{ \tauboldn \in \Sigmaboldn \mid \n  \cdot \tauboldn  = \gN \text{ on } \e \text{ if } \e\subset \GammaN    \right\}, \\
\Sigmaboldnz  	& := \left\{ \tauboldn \in \Sigmaboldn \mid \n \cdot \tauboldn  = 0 \text{ on } \e \text{ if } \e \subset \GammaN   \right\}. \\
\end{split}
\]
With each global space, we associate a set of unisolvent degrees of freedom obtained by coupling the boundary degrees of freedom.

\begin{remark} \label{remark:divergence-mixed}
Although the functions in~$\SigmaboldnE$ are not available in closed form inside the elements, their divergence is computable from the degrees of freedom~\eqref{edgeDOF:flux} and~\eqref{gradientDOF:flux} \emph{explicitly}; see~\eqref{divergence:explicit}.
Thence, we are able to compute exactly the bilinear form~$\b(\tauboldn, \vn)$~\eqref{bilinear:b} for all~$\tauboldn\in \Sigmaboldn$ and~$\vn \in \Vn$.
\eremk
\end{remark}

\paragraph*{Projector.}
We introduce  the~$L^2(\E)$ vector projector~$\Piboldzp : \SigmaboldnE \rightarrow \Gcal_{\p}(\E)$ into the gradient of polynomials on each element~$\E \in \taun$:
\begin{equation} \label{projector:mixed}
\aE( \tauboldn - \Piboldzp \tauboldn, \qboldp) = 0 \quad\quad \forall \tauboldn\in \SigmaboldnE,\, \forall \qboldp \in \Gcal_{\p}(\E).
\end{equation}
This projector is computable from the degrees of freedom in~\eqref{edgeDOF:flux}--\eqref{orthogonalDOF:flux}.
To see this, we observe that
\[
\aE(\tauboldn, \nabla \qppo) = -\aE(\div\tauboldn, \qppo)  + (\tauboldn \cdot \n, \qppo)_{0,\partial \E}  \quad \forall \tauboldn \in \SigmaboldnE, \, \forall \qppo \in \mathbb P_{\p+1}(\E).
\]
The terms on the right-hand side are computable using an integration by parts and the degrees of freedom~\eqref{edgeDOF:flux} and~\eqref{gradientDOF:flux}.
Indeed, both the divergence and the normal components over the mesh skeleton of functions in virtual element spaces are known explicitly; see also~\eqref{divergence:explicit}.

The projector~$\Piboldzp$ can be generalized to a projector into spaces of gradients of polynomials with arbitrary degree, i.e., $[\mathbb P_{\widetilde \p}(\E)]^2$ for all~$\widetilde\p \in \mathbb N$.

\paragraph*{Discrete bilinear forms and right-hand side.}
As for the bilinear form~$\a(\cdot,\cdot)$, we proceed similarly to what is done for the primal formulation in Section~\ref{section:VEM:primal}. First, consider the splitting
\[
\a(\sigmabold, \taubold) = \sum_{\E \in \taun} \int_\E \k^{-1} \sigmabold \cdot \taubold =: \sum_{\E \in \taun}  \aE (\sigmabold_{|\E}, \taubold_{|\E})  \quad\quad \forall \sigmabold,\, \taubold \in \Sigmabold.
\]
The projector~$\Piboldzp$ in~\eqref{projector:mixed} allows us to construct a computable discrete bilinear form mimicking~$a(\cdot, \cdot)$.
For all~$\E \in \taun$, consider any bilinear form~$\SE : \ker(\Piboldzp) \times \ker(\Piboldzp) \rightarrow \mathbb R$, computable via the degrees of freedom and satisfying
\begin{equation} \label{local_stab:mixed}
\alpha_* \aE (\tauboldn, \tauboldn) \le \SE(\tauboldn,\tauboldn) \le \alpha^* \aE (\tauboldn, \tauboldn) .
\end{equation}
The constants~$0< \alpha_* \le \alpha^*< +\infty$ depend possibly on the geometry of the polygonal decomposition through the parameter~$\gamma$ in the assumptions (\textbf{G1}) and (\textbf{G2}), the degree of accuracy~$\p$, and~$\k$, but must be independent of the size~$\hE$ of element~$\E$.

Introducing the local discrete bilinear forms
\[
\anE(\sigmaboldn, \tauboldn) := \aE(\Piboldzp \sigmaboldn, \Piboldzp \tauboldn) + \SE( (\Ibold-\Piboldzp) \sigmaboldn, (\Ibold-\Piboldzp) \tauboldn)  \quad\quad \forall \sigmaboldn,\, \tauboldn \in \Sigmaboldn(\E),
\]
we define the global discrete bilinear form
\[
\an(\sigmaboldn, \tauboldn) := \sum_{\E \in \taun} \anE(\sigmaboldn{}_{|\E}, \tauboldn{}_{|\E})	 \quad\quad \forall \sigmaboldn,\, \tauboldn \in \Sigmaboldn.
\]
Following~\cite{Brezzi-Falk-Marini}, the global discrete bilinear form~$\an(\cdot,\cdot)$ is coercive and continuous with constants~$\min(k^*{}^{-1}, \min_{\E \in \taun} \alpha_*)$ and~$\max(k_*{}^{-1}, \max_{\E \in \taun} \alpha^*)$ on the discrete kernel
\begin{equation} \label{discrete:kernel}
\mathcal K_n = \left\{ \tauboldn \in \Sigmaboldn \mid \b(\tauboldn, \vn) = 0 \text{ for all } \vn \in \Vn   \right\}.
\end{equation}
The discrete kernel~$\mathcal K_n$ is contained in the continuous kernel~$\mathcal K$, which is defined as
\[
\mathcal K := \left\{ \taubold \in \Sigmabold \mid \b(\taubold, v) = 0 \text{ for all } v \in V   \right\}.
\]
As for the right-hand side~$(\f, \vn)_{0,\Omega}$, we recall that~$\vn$ is a piecewise polynomial.
Hence, the right-hand side can be approximated at any precision with sufficiently accurate quadrature formulas.

\paragraph*{The virtual element method for the mixed formulation.}  The virtual element method tailored for the approximation of the problem in mixed form~\eqref{mixed:formulation} reads
\begin{equation} \label{VEM:mixed}
\begin{cases}
\text{find } (\sigmaboldn, \un) \in \SigmaboldngN \times \Vn \text{ such that}\\ 
\an(\sigmaboldn, \tauboldn)  +\b(\tauboldn, \un) = -(\gD, \n \cdot \tauboldn)_{0,\GammaD} \quad \forall \tauboldn\in \Sigmaboldnz \\
\b(\sigmaboldn, \vn) = (-\f,\vn)_{0,\Omega} \quad \forall \vn\in \Vn.
\end{cases}
\end{equation}
The definition of the trial and test spaces, and the second equation in~\eqref{VEM:mixed} entail that
\begin{equation} \label{identity:div}
\div \sigmaboldn = \Pizpmo \f \quad \quad \text{in } \Omega.
\end{equation}

\begin{remark}  \label{remark:hp:mixed}
So far, we have discussed the construction of virtual elements with a uniform degree of accuracy over all the elements.
As already highlighted in Remark~\ref{remark:hp:primal}, the flexibility of the virtual element framework allows for the construction of global spaces with variable degrees of accuracy.
Since the construction of spaces with variable degree of accuracy follows along the same lines as of the primal formulation case, we omit the details of the construction.
\eremk
\end{remark}

\subsection{Providing explicit stabilizations} \label{subsection:stabilizations}
Here, we address the issue of providing computable stabilizations with bounds on the constants~$\alphatilde_*$, $\alphatilde^*$, $\alpha_*$, and~$\alpha^*$ that are explicit in terms of degree of accuracy~$\p$;
see~\eqref{local_stab:primal} and~\eqref{local_stab:mixed}, respectively.
We pose ourselves in the situation of a uniform degree of accuracy over all the elements.
The variable case can be tackled as in~\cite{hpVEMcorner}; see also Remarks~\ref{remark:hp:primal} and~\ref{remark:hp:mixed} for further comments on this aspect.

\paragraph*{Theoretical stabilizations.}
A stabilization $\SE(\cdot,\cdot)$ for the primal formulation with explicit bounds on the stabilization constants~$\alphatilde_*$ and~$\alphatilde^*$ can be found in~\cite[Section 4]{hpVEMcorner}.
Assuming that~$\k=1$, such stabilization reads
\begin{equation} \label{stab:primal}
\StildeE(\utilden,\vtilden) = \frac{\p^2}{\hE^2} (\Pitildez \utilden, \Pitildez \vtilden)_{0,\E} + \frac{\p}{\hE} (\utilden, \vtilden)_{0,\partial \E}.
\end{equation}
The following bounds were proven in~\cite[Theorem~2]{hpVEMcorner}:
\begin{equation} \label{stability:bounds:primal}
\alphatilde_*(\p) \gtrsim p^{-5}, \quad \quad \alphatilde^*(\p) \lesssim \p^2.
\end{equation}
Such bounds are extremely crude. As numerically investigated in~\cite[Section~4.1]{hpVEMcorner}, the dependence on~$\p$ is much milder in practice.

It is possible to modify the stabilization in~\eqref{stab:primal} to the instance of variable~$\k$:
\[
\StildeE(\utilden,\vtilden) = \frac{\hE^2}{\p^2} (\k \Pitildez \utilden, \Pitildez \vtilden)_{0,\E} + \frac{\hE}{\p} (\k \utilden, \vtilden)_{0,\partial \E}.
\]
Exploiting the assumptions on~$\k$ in~\eqref{ass:k}, the bounds on the stability constants become
\begin{equation} \label{bounds:stab_primal}
\alphatilde_*(\p) \gtrsim k_* p^{-5}, \quad \quad \alphatilde^*(\p) \lesssim k^*\p^2.
\end{equation}

Next, we focus on an explicit choice for the stabilization~$\SE(\cdot,\cdot)$ in~\eqref{local_stab:mixed} in the mixed VEM~\eqref{VEM:mixed}.
For all~$\E \in \taun$ and~$\sigmaboldn$ and~$\tauboldn$ in the kernel of the projector~$\Piboldzp$, we define the stabilization
\begin{equation} \label{stab:mixed}
\begin{split}
\SE (\sigmaboldn, \tauboldn) 	& = \hE (\k^{-1} \n \cdot \sigmaboldn, \n \cdot \tauboldn  )_{0,\partial \E} \\
& \quad + \hE^2 (\k^{-1} \div\, \sigmaboldn, \div\, \tauboldn)_{0,\E} + \hE^2 (\k^{-1}  \rot\, \sigmaboldn, \rot\, \tauboldn)_{0,\E}.
\end{split}
\end{equation}
Thanks to the choice of the degrees of freedom~\eqref{edgeDOF:flux}--\eqref{orthogonalDOF:flux}, the stabilization~$\SE$ is explicitly computable.
Recall the following four lemmata from~\cite{kvrivzek1984validity, monk2003finite, ncHVEM, hpVEMcorner}, which will be instrumental in the analysis of the stabilization in~\eqref{stab:mixed}.
\begin{lem} \label{lemma:technical1}
For all convex Lipschitz domains~$\E \subset\mathbb R^2$ with diameter~$1$ and~$\taubold \in \HdivE \cap \HcurlE$ with~$\nE \cdot \taubold =0$ on~$\partial \E$, the following bound is valid:
\[
\Vert \taubold \Vert_{0,\E} \lesssim \left( \Vert \div \taubold \Vert_{0,\E} +  \Vert \rot \taubold \Vert_{0,\E}     \right).
\]
\end{lem}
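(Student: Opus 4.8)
Lemma \ref{lemma:technical1} is a classical div-rot estimate (sometimes attributed to the Friedrichs-type inequalities for vector fields; see Girault-Raviart). The plan is to reduce it to a well-known elliptic regularity statement via a Helmholtz-type decomposition.

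First I would decompose $\taubold \in H(\div,\E) \cap H(\rot,\E)$ with $\n\cdot\taubold = 0$ on $\partial\E$. Since $\E$ is convex (hence simply connected and Lipschitz), I would seek $\taubold = \nabla\psi + \mathbf{curl}\,\phi$, where $\mathbf{curl}\,\phi := (\partial_y\phi, -\partial_x\phi)$. The scalar potential $\psi$ is chosen to solve the Neumann problem $\Delta\psi = \div(\taubold)$ in $\E$, $\partial_n\psi = \n\cdot\taubold = 0$ on $\partial\E$, which is solvable because $\int_\E \div(\taubold) = \int_{\partial\E} \n\cdot\taubold = 0$; normalize $\int_\E \psi = 0$. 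By $H^2$-regularity on convex domains, $\Vert\psi\Vert_{2,\E} \lesssim \Vert\div(\taubold)\Vert_{0,\E}$, and in particular $\Vert\nabla\psi\Vert_{0,\E} \lesssim \Vert\div(\taubold)\Vert_{0,\E}$. The remainder $\taubold - \nabla\psi$ is divergence-free with vanishing normal trace, so on a simply connected domain it equals $\mathbf{curl}\,\phi$ for some $\phi \in H^1(\E)$ that is constant on $\partial\E$ — i.e., $\phi \in H^1_0(\E)$ after subtracting that constant. Then $\rot(\mathbf{curl}\,\phi) = -\Delta\phi = \rot(\taubold) - \rot(\nabla\psi) = \rot(\taubold)$, so $\phi$ solves $-\Delta\phi = \rot(\taubold)$ in $\E$, $\phi = 0$ on $\partial\E$; a standard Poincaré/$H^1_0$ estimate gives $\Vert\nabla\phi\Vert_{0,\E} = \Vert\mathbf{curl}\,\phi\Vert_{0,\E} \lesssim \Vert\rot(\taubold)\Vert_{0,\E}$.

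Combining the two pieces via the triangle inequality and the $L^2$-orthogonality (or simply boundedness) of the decomposition yields
\[
\Vert\taubold\Vert_{0,\E} \le \Vert\nabla\psi\Vert_{0,\E} + \Vert\mathbf{curl}\,\phi\Vert_{0,\E} \lesssim \Vert\div(\taubold)\Vert_{0,\E} + \Vert\rot(\taubold)\Vert_{0,\E},
\]
which is the claimed bound; since $\E$ has diameter $1$, all the Poincaré and regularity constants depend only on the shape of $\E$ and not on any scaling parameter.

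The main obstacle is the justification of the Helmholtz-type decomposition together with the two elliptic estimates with constants depending only on the domain: one needs the $H^2$ shift for the Neumann Laplacian on a \emph{convex} Lipschitz domain (Grisvard), and the representation of a divergence-free field with zero normal trace as $\mathbf{curl}$ of an $H^1_0$ potential on a simply connected domain. Both are classical, so in practice I would simply cite \cite{kvrivzek1984validity, monk2003finite}, where precisely this inequality (with the convexity hypothesis used to get the regularity constant) is established, rather than reprove it.
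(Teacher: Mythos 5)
Your proposal is correct. Note that the paper's own ``proof'' is a one-line citation of~\cite[Theorem~4.4]{kvrivzek1984validity}, which is exactly where you land in your final sentence, so in substance you end up in the same place; the difference is that you also supply a genuine self-contained argument via the Helmholtz decomposition $\taubold = \nabla\psi + \curlbold\,\phi$, and that argument is sound: the Neumann problem for $\psi$ is compatible because $\int_\E \div(\taubold) = \int_{\partial\E}\n\cdot\taubold = 0$, the remainder is divergence-free with vanishing normal trace and hence equals $\curlbold\,\phi$ with $\phi\in H^1_0(\E)$ on a simply connected domain, and the two potentials are controlled by $\Vert\div(\taubold)\Vert_{0,\E}$ and $\Vert\rot(\taubold)\Vert_{0,\E}$ respectively. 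One small refinement: you do not actually need the convex-domain $H^2$ shift for $\psi$. Since only $\Vert\nabla\psi\Vert_{0,\E}$ is required, testing the Neumann problem with $\psi$ and using the Poincar\'e--Wirtinger inequality gives $\Vert\nabla\psi\Vert_{0,\E}\lesssim\Vert\div(\taubold)\Vert_{0,\E}$ directly, and likewise the $H^1_0$ energy estimate handles $\phi$; so your decomposition in fact proves the $L^2$ bound for any simply connected Lipschitz domain with uniformly bounded Poincar\'e constant, whereas the cited Theorem~4.4 is the stronger $H^1$-norm estimate for which convexity genuinely matters. This is consistent with (and slightly sharpens) the remark after Proposition~\ref{proposition:stab:mixed} that convexity is invoked only through this lemma.
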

\begin{proof}
See~\cite[Theorem 4.4]{kvrivzek1984validity}.
\end{proof}

\begin{lem} \label{lemma:technical2}
For all simply connected and bounded Lipschitz domains~$\E \subset\mathbb R^2$ with diameter~$1$ and~$\taubold \in \HdivE \cap \HcurlE$ with~$\div \taubold=0$ and~$ \n \cdot \taubold \in L^2(\Omega)$,
the following bound is valid:
\[
\Vert \taubold \Vert_{0,\E} \lesssim \left(  \Vert \rot (\taubold) \Vert_{0,\E} + \Vert \n \cdot \taubold  \Vert_{0,\partial \E}     \right).
\]
\end{lem}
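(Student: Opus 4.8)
The plan is to reduce Lemma~\ref{lemma:technical2} to a scalar elliptic regularity estimate by constructing a suitable potential. Since $\E$ is simply connected and $\div(\taubold)=0$ in $\E$, there exists a stream function $\psi \in H^1(\E)$, unique up to an additive constant, such that $\taubold = \rotbold \psi := (\partial_y \psi, -\partial_x \psi)$ (the vector rotational), with the normalization $\int_\E \psi = 0$. Under this identification one has $\rot(\taubold) = -\Delta \psi$ and $\n \cdot \taubold = \partial_t \psi$, the tangential derivative of $\psi$ along $\partial \E$; in particular the hypothesis $\n \cdot \taubold \in L^2(\partial \E)$ says exactly that the Dirichlet trace $\psi|_{\partial\E}$ lies in $H^1(\partial \E)$. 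Thus $\psi$ solves the Poisson problem $-\Delta \psi = \rot(\taubold)$ in $\E$ with Dirichlet datum $\psi|_{\partial\E}$ whose tangential derivative is controlled in $L^2(\partial\E)$.

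Next I would invoke the $H^{3/2}$-regularity (or, equivalently, an $H^1$-estimate for $\nabla\psi$) for the Poisson problem on a bounded Lipschitz domain with this mixed-regularity data: one gets
\[
\Vert \nabla \psi \Vert_{0,\E} \lesssim \Vert \rot(\taubold) \Vert_{0,\E} + \Vert \partial_t \psi \Vert_{0,\partial \E},
\]
where the implicit constant depends only on $\E$ (and since $\mathrm{diam}(\E)=1$ the estimate is scale-correct). The precise functional-analytic statement one needs is the one recorded, e.g., in~\cite{monk2003finite, bernardi2001error}: for the Laplacian on a Lipschitz domain the solution operator maps $L^2(\E)\times H^1(\partial\E)$ continuously into $H^1(\E)\cap$ (roughly) $H^{3/2}$, and one can bound $\Vert\nabla\psi\Vert_{0,\E}$ by the $L^2$-norm of the source plus the $H^1(\partial\E)$-seminorm (i.e. $\Vert\partial_t\psi\Vert_{0,\partial\E}$) of the boundary trace, after the normalization $\int_\E\psi=0$ removes the kernel. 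Combining this with $\vert \taubold \vert = \vert \nabla \psi \vert$ pointwise, and with $\rot(\taubold) = -\Delta\psi$ and $\n\cdot\taubold = \partial_t\psi$, yields
\[
\Vert \taubold \Vert_{0,\E} = \Vert \nabla \psi \Vert_{0,\E} \lesssim \Vert \rot(\taubold) \Vert_{0,\E} + \Vert \n\cdot\taubold \Vert_{0,\partial\E},
\]
which is the claim.

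The main obstacle is making the stream-function construction and the boundary-regularity step rigorous on a general bounded simply connected Lipschitz domain rather than a smooth or convex one: existence of $\psi \in H^1(\E)$ with $\taubold = \rotbold\psi$ is the vector-calculus fact that a divergence-free $L^2$ field on a simply connected domain is a rotational, but one must verify that the tangential-trace identity $\n\cdot\taubold = \partial_t\psi$ holds in the appropriate (dual) sense and that "$\n\cdot\taubold\in L^2(\partial\E)$" genuinely upgrades $\psi|_{\partial\E}$ from $H^{1/2}(\partial\E)$ to $H^1(\partial\E)$. Once that trace regularity is in hand, the elliptic estimate is classical and quotable from the cited references~\cite{monk2003finite, bernardi2001error, kvrivzek1984validity}; no convexity is needed here (unlike in Lemma~\ref{lemma:technical1}), since we only bound $\Vert\nabla\psi\Vert_{0,\E}$ and not a full second-derivative norm.
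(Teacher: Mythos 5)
Your proposal is correct, but it is genuinely more than what the paper does: the paper's entire proof is the one-line citation ``this is the two dimensional counterpart of \cite[Corollary 3.51]{monk2003finite}'', i.e.\ it quotes a known Friedrichs-type inequality for fields in $\HdivE\cap\HcurlE$ with $L^2$ normal trace, whereas you actually reconstruct the standard 2D proof of that inequality. Your route --- write $\taubold=\rotbold\psi$ using simple connectedness and $\div(\taubold)=0$, identify $\rot(\taubold)=\pm\Delta\psi$ and $\n\cdot\taubold=\partial_t(\psi|_{\partial\E})$, and then split $\psi$ into a zero-trace part (controlled by $\Vert\Delta\psi\Vert_{0,\E}$ via Poincar\'e, with constant scaling correctly since $\mathrm{diam}(\E)=1$) and a harmonic part with $H^1(\partial\E)$ trace --- is sound, and you correctly identify the two delicate points: the distributional trace identity $\n\cdot\taubold=\partial_t\psi$ (which is what upgrades $\psi|_{\partial\E}$ to $H^1(\partial\E)$ under the hypothesis $\n\cdot\taubold\in L^2(\partial\E)$) and the fact that no convexity is needed. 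The one inaccuracy is in the attribution of the elliptic estimate $\Vert\nabla\psi_g\Vert_{0,\E}\lesssim\Vert\partial_t\psi\Vert_{0,\partial\E}$ for the harmonic part: this is the solvability of the regularity problem for the Laplacian on Lipschitz domains (Jerison--Kenig, Verchota), with constant depending on the Lipschitz character of $\E$; it is not contained in \cite{bernardi2001error}, which concerns polynomial inverse inequalities, and is only adjacent to the material in \cite{monk2003finite}. That is a citation slip, not a mathematical gap; with the correct reference your argument is a complete and self-contained alternative to the paper's appeal to Monk, and it makes explicit why the $\Vert\taubold\Vert_{0,\E}$ term that usually appears on the right-hand side of such Friedrichs inequalities can be dropped here (the harmonic-field obstruction vanishes on simply connected domains).
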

\begin{proof}
This is the two dimensional counterpart of~\cite[Corollary 3.51]{monk2003finite}.
\end{proof}

\begin{lem} \label{lemma:technical3}
Let~$\E$ be a polygon with diameter~$1$. Assume that its edges have a length~$\approx 1$.
Then, for all piecewise polynomials~$\qp$ over~$\partial \E$, the following polynomial inverse inequality is valid:
\begin{equation} \label{Bernardi:estimate}
\Vert \qp \Vert_{0,\partial \E} \lesssim \p^{\frac32} \Vert \qp \Vert_{-\frac12,\partial \E}.
\end{equation}
\end{lem}
\begin{proof}
See~\cite[proof of Theorem~$3.2$]{ncHVEM}.
\end{proof}

\begin{lem} \label{lemma:technical4}
Let~$\E$ be a polygon with diameter~$1$. Assume that its edges have a length~$\approx 1$.
Then, for all~$\qp \in \mathbb P_\p(\E)$, the following polynomial inverse inequality is valid:
\begin{equation} \label{inverse:negative}
\Vert \qp \Vert_{0,\E} \lesssim \p^2 \Vert \qp \Vert_{-1,\E} := \p^2 \sup_{\Phi \in H^1_0(\E), \Phi \ne 0} \frac{(\qp, \Phi)_{0,\E}}{\vert \Phi \vert_{1,\E}}.
\end{equation}
\end{lem}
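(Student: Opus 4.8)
The plan is to exhibit a near‑optimal competitor in the supremum defining~$\Vert\cdot\Vert_{-1,\E}$; the key ingredients will be two~$\p$-explicit polynomial inverse inequalities, one of which is the interior companion of Lemma~\ref{lemma:technical3}. Assume~$\qp\neq0$ (otherwise the statement is trivial). Let~$b_\E$ be a polynomial bubble of~$\E$, namely the product of the affine functions vanishing on the edges of~$\E$, normalized so that~$0\le b_\E\le1$ and~$\vert\nabla b_\E\vert\lesssim1$; recall that, being a mesh element, $\E$ has a bounded number of edges by~(\textbf{G1})--(\textbf{G2}), so~$b_\E$ has bounded degree and~$b_\E\in H^1_0(\E)$. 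Take the test function~$\Phi:=b_\E\,\qp\in H^1_0(\E)$. Since~$(\qp,\Phi)_{0,\E}=\int_\E b_\E\,\qp^2=\Vert b_\E^{1/2}\qp\Vert_{0,\E}^2$, choosing~$w=\Phi$ in~\eqref{negative:norms} gives
\[
\Vert\qp\Vert_{-1,\E}\;\ge\;\frac{(\qp,\Phi)_{0,\E}}{\vert\Phi\vert_{1,\E}}\;=\;\frac{\Vert b_\E^{1/2}\qp\Vert_{0,\E}^2}{\vert\Phi\vert_{1,\E}}.
\]

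It remains to bound~$\vert\Phi\vert_{1,\E}$ suitably. I will use the interior Bernardi-type inverse inequality
\[
\Vert\qp\Vert_{0,\E}\;\lesssim\;\p\,\Vert b_\E^{1/2}\qp\Vert_{0,\E}
\]
and the weighted~$L^2$ Markov inequality
\[
\int_\E b_\E^{2}\,\vert\nabla\qp\vert^2\;\lesssim\;\p^2\int_\E b_\E\,\qp^2.
\]
Granting these, writing~$\nabla\Phi=\qp\,\nabla b_\E+b_\E\,\nabla\qp$ and using~$\vert\nabla b_\E\vert\lesssim1$, then the weighted Markov inequality, then the Bernardi-type inequality,
\[
\vert\Phi\vert_{1,\E}^2\;\lesssim\;\Vert\qp\Vert_{0,\E}^2+\p^2\,\Vert b_\E^{1/2}\qp\Vert_{0,\E}^2\;\lesssim\;\p^2\,\Vert b_\E^{1/2}\qp\Vert_{0,\E}^2;
\]
inserting this into the lower bound above and applying the Bernardi-type inequality once more yields
\[
\Vert\qp\Vert_{-1,\E}\;\gtrsim\;\frac{\Vert b_\E^{1/2}\qp\Vert_{0,\E}^2}{\p\,\Vert b_\E^{1/2}\qp\Vert_{0,\E}}\;=\;\frac{\Vert b_\E^{1/2}\qp\Vert_{0,\E}}{\p}\;\gtrsim\;\p^{-2}\,\Vert\qp\Vert_{0,\E},
\]
which is the asserted estimate.

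Both auxiliary inverse inequalities can be reduced to the reference triangle: subdividing~$\E$ into a bounded number of shape-regular triangles~$T$, the zero extension of a function in~$H^1_0(T)$ lies in~$H^1_0(\E)$ with unchanged~$H^1$-seminorm, so~$\Vert\qp\Vert_{-1,\E}\ge\Vert\qp\Vert_{-1,T}$ while~$\Vert\qp\Vert_{0,\E}^2=\sum_T\Vert\qp\Vert_{0,T}^2$; hence it suffices to argue on a single shape-regular triangle, with~$b_\E$ the cubic element bubble. On a triangle the Bernardi-type inequality follows, as for Lemma~\ref{lemma:technical3}, from Gauss--Lobatto quadrature and Christoffel-function estimates (cf.~\cite{bernardi2001error, hpVEMcorner}). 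The weighted Markov inequality rests on the one-dimensional Sturm--Liouville identity~$-\big((1-x^2)^2\,P_k'\big)'=k(k+3)\,(1-x^2)\,P_k$ for the Jacobi polynomials~$P_k=P_k^{(1,1)}$, which are orthogonal with respect to the weight~$1-x^2$: expanding~$\qp=\sum_{k\le\p}c_k\,P_k$ and integrating by parts (the off-diagonal terms~$\int_{-1}^1(1-x^2)^2\,P_j'\,P_k'$, $j\neq k$, vanishing by the same identity and the vanishing of~$(1-x^2)^2$ at~$\pm1$) gives~$\int_{-1}^1(1-x^2)^2\,\qp'^2=\sum_{k\le\p}c_k^2\,k(k+3)\int_{-1}^1(1-x^2)\,P_k^2\le\p(\p+3)\int_{-1}^1(1-x^2)\,\qp^2$, and the two-dimensional version follows by localizing near the edges of the triangle, combined with~$b_\E^2\le b_\E$. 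I expect the weighted Markov inequality to be the main obstacle: estimating~$\vert\Phi\vert_{1,\E}$ merely through the ordinary polynomial inverse inequality~$\vert\Phi\vert_{1,\E}\lesssim\p^2\Vert\qp\Vert_{0,\E}$ would only deliver~$\Vert\qp\Vert_{0,\E}\lesssim\p^4\Vert\qp\Vert_{-1,\E}$, so the gain to the sharp power~$\p^2$ hinges precisely on pairing the weight~$b_\E^2$ on the gradient with the weight~$b_\E$ on the function.
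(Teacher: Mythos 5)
The paper does not prove this lemma: it defers entirely to \cite[Theorem~5]{hpVEMcorner}. Your argument is a correct, self-contained reconstruction along the standard route (and, as far as I can tell, essentially the route of the cited reference): test the negative norm with the bubble-weighted polynomial $\Phi=b\,\qp$, so that the numerator becomes $\Vert b^{1/2}\qp\Vert_{0}^{2}$, and win the sharp power $\p^{2}$ by controlling $\vert b\,\qp\vert_{1}$ through Melenk--Wohlmuth-type \emph{weighted} inverse estimates instead of the plain Markov inequality -- your closing remark about why the naive bound only yields $\p^{4}$ is exactly the right diagnosis. The reduction to a single shape-regular triangle by zero extension of test functions is clean, and the one-dimensional Sturm--Liouville computation is correct: $-\big((1-x^{2})^{2}P_k'\big)'=k(k+3)(1-x^{2})P_k$ for $P_k=P_k^{(1,1)}$ does give $\int_{-1}^{1}(1-x^{2})^{2}\qp'^{2}\le\p(\p+3)\int_{-1}^{1}(1-x^{2})\qp^{2}$, and the final chain $\Vert\qp\Vert_{-1,\E}\gtrsim\p^{-1}\Vert b^{1/2}\qp\Vert_{0}\gtrsim\p^{-2}\Vert\qp\Vert_{0}$ is sound.

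Two points should be tightened. First, the passage from the one-dimensional weighted Markov inequality to its two-dimensional version on the triangle, $\int_T b_T^{2}\vert\nabla\qp\vert^{2}\lesssim\p^{2}\int_T b_T\,\qp^{2}$, is asserted rather than proved: ``localizing near the edges combined with $b_T^{2}\le b_T$'' does not by itself produce the estimate, since the triangle bubble is not a tensor product and the Duffy map distorts both the gradient and the weight. The honest fix is either to cite the statement $\Vert\nabla(b_T\,\qp)\Vert_{0,T}\lesssim\p\,\Vert b_T^{1/2}\qp\Vert_{0,T}$ directly (it is one of the weighted inverse estimates of Melenk and Wohlmuth \cite{MelenkWohlmuth_hpFEMaposteriori}), or to carry out the tensorized argument on the reference square and transfer it with care. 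Second, your opening definition of the polygonal bubble as the product of the affine functions vanishing on the edges of~$\E$ is sign-indefinite on non-convex elements, which assumptions (\textbf{G1})--(\textbf{G2}) do not exclude; this is harmless only because your final argument lives entirely on the triangles of the subdivision, so the polygonal bubble should simply be dropped from the write-up.
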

\begin{proof}
See~\cite[Theorem~5]{hpVEMcorner}.
\end{proof}

We prove the following result.
\begin{prop} \label{proposition:stab:mixed}
For all convex~$\E\in \taun$, the bilinear form~$\SE(\cdot,\cdot)$ in~\eqref{stab:mixed} is such that the following bounds on the constants~$\alpha_*$ and~$\alpha^*$ in~\eqref{local_stab:mixed} are valid:
\begin{equation} \label{bounds:stab_mixed}
\alpha_* \gtrsim (k^*){}^{-1} ,\quad\quad \alpha^*\lesssim (k_*)^{-1} \p^7.
\end{equation}
\end{prop}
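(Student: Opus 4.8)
The strategy is to work on the reference configuration where $\E$ has diameter $1$ — the stated lemmata all assume this — and then track the powers of $\hE$ and $\k$ by the usual scaling arguments, using assumptions (\textbf{G1})--(\textbf{G2}) to control the geometry and~\eqref{ass:k} to replace $\k$ by the constants $k_*,k^*$. Fix $\E$ convex and $\tauboldn$ in the kernel of the $L^2$ projector. The lower bound $\SE(\tauboldn,\tauboldn)\gtrsim(k^*)^{-1}\aE(\tauboldn,\tauboldn)$ is the easy direction: since $\aE(\tauboldn,\tauboldn)=\int_\E\k^{-1}|\tauboldn|^2\le (k_*)^{-1}\Vert\tauboldn\Vert_{0,\E}^2$, I would invoke a trace/Poincar\'e-type inequality to bound $\Vert\tauboldn\Vert_{0,\E}^2$ (hence $\aE(\tauboldn,\tauboldn)$) by a constant times the three terms appearing in $\SE$ — but here one must be careful that $\tauboldn$ does \emph{not} have vanishing normal trace, so Lemma~\ref{lemma:technical1} does not apply directly; instead one splits $\tauboldn$ into a part with the given normal trace and divergence (controlled via a lifting, or directly via Lemma~\ref{lemma:technical2}) plus a remainder with zero normal trace (controlled via Lemma~\ref{lemma:technical1}). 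Scaling back to $\E$ of diameter $\hE$ converts $\Vert\tauboldn\Vert_{0,\E}^2$ on the left against $\hE\Vert\n\cdot\tauboldn\Vert_{0,\partial\E}^2+\hE^2\Vert\div\tauboldn\Vert_{0,\E}^2+\hE^2\Vert\rot\tauboldn\Vert_{0,\E}^2$ on the right with matching homogeneity, and the $\k$-weights produce the factor $(k^*)^{-1}$.

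**The upper bound.** This is where the $\p^6$ enters, and it is the main obstacle. I need $\SE(\tauboldn,\tauboldn)\lesssim(k_*)^{-1}\p^6\,\aE(\tauboldn,\tauboldn)$, i.e., after scaling to diameter $1$ and absorbing $\k$ into $k_*,k^*$, I must bound each of
\[
\Vert\n\cdot\tauboldn\Vert_{0,\partial\E}^2,\qquad \Vert\div\tauboldn\Vert_{0,\E}^2,\qquad \Vert\rot\tauboldn\Vert_{0,\E}^2
\]
by $\p^6\Vert\tauboldn\Vert_{0,\E}^2$. For the boundary term: $\n\cdot\tauboldn$ is a piecewise polynomial of degree $\p$ on $\partial\E$, so by Lemma~\ref{lemma:technical3} it suffices to bound $\Vert(\n\cdot\tauboldn)\be^{1/2}\Vert_{0,\partial\E}$; lifting the boundary bubble-weighted polynomial to an $H^1_0(\E)$ function (a standard polynomial extension with norm blow-up like $\p$) and integrating by parts against $\tauboldn$ converts this into a volume pairing, producing at most two extra powers of $\p$ beyond the one from Lemma~\ref{lemma:technical3} — giving $\p^2\cdot\p^2=\p^4$, hence $\p^6$ after a further $\p^2$ comes in. For the volume terms: $\div\tauboldn$ and $\rot\tauboldn$ are polynomials of degree $\p-1$ in $\E$, so by Lemma~\ref{lemma:technical4}, $\Vert\div\tauboldn\Vert_{0,\E}\lesssim\p^2\Vert\div\tauboldn\Vert_{-1,\E}$ and likewise for $\rot$; then $\Vert\div\tauboldn\Vert_{-1,\E}=\sup_{\Phi\in H^1_0(\E)}(\div\tauboldn,\Phi)_{0,\E}/|\Phi|_{1,\E}=\sup(\tauboldn,\nabla\Phi)_{0,\E}/|\Phi|_{1,\E}\le\Vert\tauboldn\Vert_{0,\E}$ after integration by parts (the boundary term drops because $\Phi\in H^1_0$), and similarly $\Vert\rot\tauboldn\Vert_{-1,\E}\le\Vert\tauboldn\Vert_{0,\E}$. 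So each volume term costs $\p^4$, i.e. $\le\p^4\Vert\tauboldn\Vert_{0,\E}^2$.

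**Assembling and the $\p$-count.** Combining, $\SE(\tauboldn,\tauboldn)\lesssim(\p^4+\p^4+\p^4)\Vert\tauboldn\Vert_{0,\E}^2\lesssim\p^4\Vert\tauboldn\Vert_{0,\E}^2$ on the reference element if the boundary term also yields $\p^4$; the degradation to $\p^6$ comes from the fact that in the boundary estimate the $H^1_0$ extension of the bubble-weighted polynomial of degree $\p$ over $\partial\E$ carries a norm growth that, when combined with Lemma~\ref{lemma:technical3}'s factor $\p$ and then Lemma~\ref{lemma:technical4}'s factor $\p^2$ applied to the resulting volume polynomial, accumulates to $\p^{1}\cdot\p^{2}\cdot\p^{?}$; I would carry this count explicitly to land exactly at $\p^6$, and this bookkeeping — getting the extension-operator power right and making sure the bubble does not cost more than necessary — is the delicate part. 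Finally, converting back to the physical element $\E$: the $\hE$-powers in the definition~\eqref{stab:mixed} of $\SE$ are chosen precisely so that every term has the homogeneity of $\Vert\tauboldn\Vert_{0,\E}^2$, and $\aE(\tauboldn,\tauboldn)\ge(k^*)^{-1}\Vert\tauboldn\Vert_{0,\E}^2$; hence $\SE(\tauboldn,\tauboldn)\lesssim\p^6\Vert\tauboldn\Vert_{0,\E}^2\le\p^6 k^*(k^*)^{-1}\ldots$ — more precisely, pulling out $k_*^{-1}$ from $\SE$ and $\Vert\tauboldn\Vert_{0,\E}^2\le k^*\aE(\tauboldn,\tauboldn)$ is replaced by $\Vert\tauboldn\Vert_{0,\E}^2\le k^*\,\aE(\tauboldn,\tauboldn)$ — wait, rather one uses $\aE(\tauboldn,\tauboldn)\ge(k^*)^{-1}\Vert\tauboldn\Vert^2_{0,\E}$ so $\Vert\tauboldn\Vert_{0,\E}^2\le k^*\aE(\tauboldn,\tauboldn)$, and combined with the $k_*^{-1}$ weight in $\SE$ this gives the claimed $\alpha^*\lesssim(k_*)^{-1}\p^6$ only after noting $k^*$ is bounded; the clean way is to bound $\SE$ directly by $(k_*)^{-1}$ times the unweighted stabilization and the unweighted stabilization by $\p^6\Vert\tauboldn\Vert_{0,\E}^2$ and $\Vert\tauboldn\Vert_{0,\E}^2\le k^*\aE(\tauboldn,\tauboldn)$, absorbing the resulting $k^*/k_*$-type dependence into the generic constant as permitted by the statement since the constants may depend on $\k$. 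I expect the dominant difficulty to be the careful tracking of $\p$-powers in the boundary term and ensuring the claimed exponent $6$ is attained rather than exceeded.
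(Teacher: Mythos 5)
Your overall skeleton matches the paper's proof: reduce to $\hE=1$ and $\k=1$ by scaling; for the lower bound, decompose $\tauboldn$ into a zero-normal-trace part and a complementary part and apply Lemmata~\ref{lemma:technical1} and~\ref{lemma:technical2}; for the volume terms of the upper bound, use Lemma~\ref{lemma:technical4} to pass to the $H^{-1}(\E)$ norm and then integrate by parts against $\Phi\in H^1_0(\E)$ to get $\Vert\div\tauboldn\Vert_{0,\E}+\Vert\rot\tauboldn\Vert_{0,\E}\lesssim\p^2\Vert\tauboldn\Vert_{0,\E}$. Two remarks on the lower bound: in your splitting you attach ``the given normal trace \emph{and} divergence'' to the piece treated by Lemma~\ref{lemma:technical2}, but that lemma requires $\div=0$; the paper assigns the divergence and rotor to the zero-normal-trace piece (handled by Lemma~\ref{lemma:technical1}) and keeps the boundary piece divergence- and rotor-free (handled by Lemma~\ref{lemma:technical2}). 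This is easily repaired but as written the lemma hypotheses are not met.

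The genuine gap is in the boundary term of the upper bound, which is exactly the term that produces $\p^6$. Your proposed route — lift the bubble-weighted trace to an $H^1_0(\E)$ function and integrate by parts — cannot work as stated, since an $H^1_0(\E)$ extension vanishes on $\partial\E$ and the boundary pairing you need would disappear; you would need an $H^1(\E)$ extension with controlled norm, and you explicitly leave the resulting $\p$-bookkeeping undone (``I would carry this count explicitly to land exactly at $\p^6$''). The paper's mechanism is different and self-contained: Lemma~\ref{lemma:technical3} together with the definition~\eqref{negative:norms} of the dual norm yields the one-dimensional inverse estimate $\Vert\n\cdot\tauboldn\Vert_{0,\partial\E}\lesssim\p\,\Vert\n\cdot\tauboldn\Vert_{-\frac12,\partial\E}$; the continuity of the normal trace map $H(\div,\E)\to H^{-\frac12}(\partial\E)$ (\cite[Theorem 3.24]{monk2003finite}) then gives $\Vert\n\cdot\tauboldn\Vert_{0,\partial\E}\lesssim\p\left(\Vert\tauboldn\Vert_{0,\E}+\Vert\div\tauboldn\Vert_{0,\E}\right)$, and feeding in the already-established $\Vert\div\tauboldn\Vert_{0,\E}\lesssim\p^2\Vert\tauboldn\Vert_{0,\E}$ produces $\p^3\Vert\tauboldn\Vert_{0,\E}$, hence $\p^6$ after squaring; the two volume terms only contribute $\p^4$. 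Without this (or an equivalent, fully quantified extension argument) your proof does not establish the exponent $6$, so the upper bound remains unproved as written.
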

\begin{proof}
Throughout the proof, we assume that~$\hE=1$. The general case follows from a scaling argument.
Besides, it suffices to prove the statement for~$\k =1$. The general case follows from~\eqref{ass:k}.
\medskip

First, we show the bound on~$\alpha_*$.
Given~$\tauboldn \in \Sigmaboldn(\E)$, we define the functions~$\tauboldtilden$ and~$\tauboldboundaryn$ as
\begin{equation} \label{splitting:1}
\nE \cdot \tauboldtilden  = 0, \quad \div \tauboldtilden = \div \tauboldn,\quad \rot \tauboldtilden = \rot \tauboldn
\end{equation}
and
\begin{equation} \label{splitting:2}
\nE \cdot \tauboldboundaryn  =  \nE \cdot \tauboldn , \quad \div \tauboldboundaryn = 0,\quad \rot \tauboldboundaryn = 0.
\end{equation}
Clearly, $\tauboldn = \tauboldtilden + \tauboldboundaryn$.
Using Lemmata~\ref{lemma:technical1} and~\ref{lemma:technical2}, we deduce
\[
\begin{split}
\Vert \tauboldn \Vert_{0,\E}
& \le \Vert \tauboldtilden \Vert_{0,\E} + \Vert \tauboldboundaryn \Vert_{0,\E} \\
& \lesssim \Vert \rot \tauboldboundaryn \Vert_{0,\E} + \Vert \nE \cdot \tauboldboundaryn \Vert_{0,\partial \E} + \Vert \rot \tauboldtilden \Vert_{0, \E} + \Vert \div \tauboldtilden \Vert_{0, \E}. \\
\end{split}
\]
Identities~\eqref{splitting:1} and~\eqref{splitting:2} imply
\[
\Vert \tauboldn \Vert_{0,\E} \lesssim \Vert \nE \cdot \tauboldn \Vert_{0,\partial \E} + \Vert \div \tauboldn \Vert_{0,\E} + \Vert \rot \tauboldn \Vert_{0,\E},
\]
which is the desired bound.
\medskip

Next, we deal with the bound on~$\alpha^*$. It suffices to show a bound for the terms on the right-hand side of~\eqref{stab:mixed} by some constant depending on~$\p$ times~$\Vert \tauboldn \Vert_{0,\E}$.
Since~$\nE \cdot \taubold$ is a piecewise polynomial on~$\partial \E$, 
we use Lemma~\ref{lemma:technical3} and get
\begin{equation} \label{inverse:inequality:1/2}
\Vert \nE \cdot  \tauboldn \Vert_{0,\partial \E} \lesssim \p^{\frac32} \, \Vert \nE \cdot  \tauboldn \Vert_{-\frac{1}{2}, \partial \E}.
\end{equation}
Using~\eqref{inverse:inequality:1/2} and the inequality~\cite[Theorem 3.24]{monk2003finite}, we obtain
\[
\Vert \nE \cdot  \tauboldn \Vert_{0,\partial \E} \lesssim \p^{\frac32} \left( \Vert \tauboldn \Vert_{0,\E} + \Vert \div \tauboldn \Vert_{0,\E}   \right).
\]
We show an upper bound on the divergence term, i.e., the second term appearing on the right-hand side of~\eqref{stab:mixed}. 
Use Lemma~\ref{lemma:technical4} substituting~$\qp$ with~$\div \tauboldn$ in~\eqref{inverse:negative}, and an integration by parts, to get
\[
\begin{split}
\Vert \div \tauboldn \Vert_{0,\E} 	& \lesssim \p^2 \Vert \div (\tauboldn) \Vert_{-1,\E} := \p^2 \sup_{\Phi \in H^1_0(\E), \Phi \ne 0} \frac{(\div \tauboldn, \Phi)_{0,\E}}{\vert \Phi \vert_{1,\E}}\\
						& = \p^2 \sup_{\Phi \in H^1_0(\E), \Phi \ne 0} \frac{(\tauboldn, \nabla \Phi)_{0,\E}}{\vert \Phi \vert_{1,\E}} \le \p^2 \Vert \tauboldn \Vert_{0,\E}.
\end{split}
\]
This concludes the proof of the upper bound on the first two terms on the right-hand side of~\eqref{stab:mixed}.

Finally, we use Lemma~\ref{lemma:technical4} substituting~$\qp$ with~$\rot \tauboldn$ to show the upper bound on the third term on the right-hand side of~\eqref{stab:mixed}.
Denoting the vectorial rotor by \textbf{curl}, we can write
\[
\begin{split}
\Vert \rot \tauboldn \Vert_{0,\E} 	&\lesssim \p^2 \Vert \rot \tauboldn \Vert_{-1,\E} := \p^2 \sup_{\Phi \in H^1_0(\E), \Phi \ne 0} \frac{(\rot \tauboldn, \Phi)_{0,\E}}{\vert \Phi \vert_{1,\E}}\\
							& = \p^2 \sup_{\Phi \in H^1_0(\E), \Phi \ne 0} \frac{(\tauboldn, \curlbold\, \Phi)_{0,\E}}{\vert \Phi \vert_{1,\E}}  \lesssim \p^2 \Vert \tauboldn \Vert_{0,\E}, \\ 
\end{split}
\]
whence the assertion follows.
\end{proof}
Note that the assumption on the convexity of the elements  in Proposition~\ref{proposition:stab:mixed} is needed only to apply Lemma~\ref{lemma:technical1}.
As for the $\h$-version of the method, the stabilization~$\SE$ in~\eqref{stab:mixed} can be employed as well.

\paragraph*{Practical stabilizations.}
In the numerical experiments of Sections~\ref{section:nr} and~\ref{section:nr:local}, we shall not employ only stabilizations~\eqref{stab:primal} and~\eqref{stab:mixed}.
Rather, we suggest to use variants of the so-called D-recipe stabilization; see~\cite{VEM3Dbasic}.
In fact, as analyzed in~\cite{fetishVEM, fetishVEM3D}, the D-recipe leads to an extremely robust performance of method~\eqref{VEM:primal}, and is straightforward to implement.

We employ the following stabilization for the primal formulation:  given~$\NVtildeE := \dim(\VtildenE)$,
for all~$\E \in \taun$, given~$\{ \varphitilde_j \}_{j=1} ^{\NVtildeE}$ the canonical basis of the local space~$\VtildenE$,
\begin{equation} \label{practical:stab:primal}
\StildeE(\varphitilde_j, \varphitilde_\ell) =  \max \left( \vert \k \vert , (\k  \nabla \Pinablap \varphitilde_j, \nabla \Pinablap \varphitilde_\ell  )_{0,\E}    \right)    \delta_{j,\ell}    \quad \quad \forall j,\ell=1,\dots, \NVtildeE,
\end{equation}
where $\vert \cdot \vert$ denotes the evaluation of a given scalar function at the barycenter of~$\E$.

Here, $\delta_{j, \ell}$ denotes the Kronecker delta, whereas the projector~$\Pinablap$ is defined in~\eqref{projection:nabla}.
\medskip

As for the D-recipe stabilization for the mixed formulation, we employ the following: given $\NsigmaboldnE := \dim(\SigmaboldnE)$,
for all~$\E \in \taun$, given~$\{ \varphibold_j \}_{j=1} ^{\NsigmaboldnE}$ the canonical basis of the local space~$\SigmaboldnE$,
\begin{equation} \label{practical:stab:mixed}
\SE(\varphibold_j, \varphibold_\ell) =  \max \left( \vert \k^{-1} \vert \; \hE^2, (\k^{-1} \Piboldzp \varphibold_j, \Piboldzp \varphibold_\ell  )_{0,\E}   \right)    \delta_{j,\ell}    \quad \quad \forall j,\ell=1,\dots,\NsigmaboldnE,
\end{equation}
where the projector~$\Piboldzp$ is defined in~\eqref{projector:mixed}.

\begin{remark}
The difficulty in providing $\p$-explicit bounds for the two practical stabilizations is related to the fact that we need to keep track of the dependence in terms of~$\p$, which is possible to do when recovering via integration by parts some polynomial terms.
For the practical stabilization for the primal formulation, this could be done under more assumptions on the polynomial basis used in the definition of the bulk degrees of freedom~\eqref{internal:moments:primal}, as for the primal formulation; see~\cite[Theorem~$2.3$]{pVEMmultigrid} for more details.
Instead, for the practical stabilization for the mixed formulation, we do not know how to prove $\p$-explicit bounds.
\end{remark}

\medskip
In Section~\ref{section:nr} below, we show the numerical experiments for the adaptive method employing the practical stabilizations.
However, we present some numerics comparing the practical and theoretical stabilizations for the $\p$-version a priori mixed VEM.
Moreover, we numerically demonstrate that the convexity assumption in Proposition~\ref{proposition:stab:mixed} is only a theoretical artefact with no effect whatsoever on the practical behaviour of the method.

\subsection{Well-posedness of the two virtual element methods} \label{subsection:well}
In this section, we prove the well-posedness of the primal and mixed methods~\eqref{VEM:primal} and~\eqref{VEM:mixed}.

The well-posedness of the primal method~\eqref{VEM:primal} follows from the continuity and coercivity of the discrete bilinear form~$\atilden(\cdot, \cdot)$,
a lifting argument,
the continuity of the discrete right-hand side, and the Lax-Milgram lemma.

As for the mixed VEM formulation~\eqref{VEM:mixed},
in addition to the usual lifting argument,
we need two ingredients in order to prove the well-posedness of the method.
The first one is the continuity and the coercivity of the bilinear form~$\an(\cdot, \cdot)$ on the discrete kernel~\eqref{discrete:kernel}.
The second one is the validity of the inf-sup condition for the bilinear form~$\b(\cdot, \cdot)$ with explicit bounds on the inf-sup constant in terms of~$\h$ and~$\p$.
The remainder of the section is devoted to prove such an inf-sup condition.
\begin{thm} \label{theorem:inf-sup}
There exists a constant~$\betan > 0$ independent of the discretization parameters, such that for all~$\vn \in \Vn$ there exists $\tauboldn \in \Sigmaboldnz$ satisfying
\begin{equation} \label{discrete:inf-sup}
\b(\vn,\tauboldn) \ge \betan \Vert \tauboldn \Vert_{\div,\Omega} \Vert \vn \Vert _{0,\Omega}.
\end{equation}
The constant~$\betan$ is known in closed form:
\[
\betan = \left( \cOmega \frac{\vert \Omega \vert^{\frac{1}{2}}}{\vert \GammaD \vert^{\frac{1}{2}}} +\cOmega + 1  \right)^{-1},
\]
where~$\cOmega$ is a constant depending only on the shape of~$\Omega$, which will be detailed in the proof.
\end{thm}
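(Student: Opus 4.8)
The plan is to construct the required $\tauboldn$ explicitly by solving an auxiliary continuous problem and then exploiting the fact that the discrete space $\Sigmaboldnz$ contains, via its divergence-conforming nature, enough fluxes to realize any prescribed piecewise polynomial divergence. First I would recall the classical trick behind the continuous inf-sup condition for $\b(\cdot,\cdot)$: given $\vn \in \Vn \subset L^2(\Omega)$, solve the auxiliary Neumann-type problem $-\Delta \phi = \vn$ in $\Omega$ with $\phi = 0$ on $\GammaD$ and $\partial_\n \phi = 0$ on $\GammaN$ (here $\GammaD \ne \emptyset$ guarantees well-posedness), and set $\taubold := -\nabla \phi \in \Hdiv$ with $\n \cdot \taubold = 0$ on $\GammaN$, so that $\taubold \in \Sigmaboldz$ and $\div(\taubold) = -\vn$, hence $\b(\taubold, \vn) = \Vert \vn \Vert_{0,\Omega}^2$. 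Elliptic regularity/stability gives $\Vert \taubold \Vert_{\div, \Omega} \le \cOmega' \Vert \vn \Vert_{0,\Omega}$ for a shape-dependent constant. The point of the closed-form constant is to track this bound carefully: $\Vert \taubold \Vert_{0,\Omega} = \vert \phi \vert_{1,\Omega}$ is controlled by a Poincar\'e-type inequality (whose constant involves $\vert \Omega \vert^{1/2} / \vert \GammaD \vert^{1/2}$ through a trace/Friedrichs argument), $\Vert \div(\taubold) \Vert_{0,\Omega} = \Vert \vn \Vert_{0,\Omega}$ exactly, so assembling the full $H(\div)$ norm yields the three additive contributions $\cOmega \vert\Omega\vert^{1/2}/\vert\GammaD\vert^{1/2}$, $\cOmega$, and $1$.

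The second ingredient is to transfer this to the discrete level. Since $\div(\SigmaboldnE) \supseteq \mathbb P_{\p-1}(\E)$ and the normal components match across internal edges, the space $\Sigmaboldnz$ surjects onto $\Vn$ under $\div$; equivalently, $\Vn = \div(\Sigmaboldnz)$. The cleanest route is: given $\vn$, take the continuous $\taubold$ above, then use the degrees of freedom \eqref{edgeDOF:flux}--\eqref{orthogonalDOF:flux} to define an interpolant, or — more robustly — directly invoke that for any $\vn \in \Vn$ there is $\tauboldn \in \Sigmaboldnz$ with $\div(\tauboldn) = -\vn$ and $\Vert \tauboldn \Vert_{\div,\Omega} \lesssim \Vert \vn \Vert_{0,\Omega}$ with the \emph{same} constant, because one can build $\tauboldn$ elementwise by a stable right-inverse of the divergence on each $\E$ (using a Raviart--Thomas-like minimal-norm lifting on $\Sigmaboldn(\E)$) with matching normal traces, and then a partition-of-unity/summation argument over $\taun$ recovers the global bound; alternatively one mimics the continuous construction discretely. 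Then $\b(\tauboldn, \vn) = -\int_\Omega \div(\tauboldn)\vn = \Vert \vn \Vert_{0,\Omega}^2 \ge \betan \Vert \tauboldn \Vert_{\div,\Omega}\Vert \vn \Vert_{0,\Omega}$ follows once $\Vert \tauboldn \Vert_{\div,\Omega} \le \betan^{-1} \Vert \vn \Vert_{0,\Omega}$.

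I expect the main obstacle to be showing that the \emph{discrete} flux can be chosen with the same constant $\betan$ as the continuous one, i.e., that passing from $\taubold \in \Sigmaboldz$ to $\tauboldn \in \Sigmaboldnz$ costs nothing in the $H(\div)$ norm and nothing in the $\p$- and $\h$-dependence. The natural fix is to avoid interpolation altogether: keep $\div(\tauboldn) = -\vn$ exactly (possible since $\vn$ is already a piecewise polynomial of degree $\p-1$, so no projection error appears), and choose $\tauboldn$ to be the \emph{minimal $L^2$-norm} element of $\Sigmaboldnz$ with that divergence and zero tangential/rotor data — this minimal-norm flux is dominated in $L^2$ by the continuous minimal-norm (gradient) flux $\taubold$ because the relevant constrained minimization over the larger continuous set gives a lower value; hence $\Vert \tauboldn \Vert_{0,\Omega} \le \Vert \taubold \Vert_{0,\Omega} \le \cOmega \vert\Omega\vert^{1/2}/\vert\GammaD\vert^{1/2}\,\Vert\vn\Vert_{0,\Omega}$. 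Combining with $\Vert\div(\tauboldn)\Vert_{0,\Omega}=\Vert\vn\Vert_{0,\Omega}$ and the triangle inequality in the graph norm $\Vert\cdot\Vert_{\div,\Omega} = (\Vert\cdot\Vert_{0,\Omega}^2 + \Vert\div(\cdot)\Vert_{0,\Omega}^2)^{1/2} \le \Vert\cdot\Vert_{0,\Omega} + \Vert\div(\cdot)\Vert_{0,\Omega}$ gives exactly the stated $\betan$. The remaining care is the precise extraction of $\cOmega$ from the Poincar\'e--Friedrichs inequality on $\Vtildez$-type spaces with the factor $\vert\Omega\vert^{1/2}/\vert\GammaD\vert^{1/2}$; this is a standard computation which I would carry out by testing the auxiliary problem with $\phi$ itself and bounding the boundary term via a trace inequality.
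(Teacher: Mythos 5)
Your continuous construction and the way you assemble the three contributions to $\betan^{-1}$ are consistent with the paper's intent, but the step on which the whole theorem hinges --- producing a \emph{discrete} flux $\tauboldn \in \Sigmaboldnz$ with $\div(\tauboldn)=\vn$ (up to sign) and $\Vert \tauboldn\Vert_{0,\Omega} \le \cOmega \vert\Omega\vert^{1/2}\vert\GammaD\vert^{-1/2}\Vert\vn\Vert_{0,\Omega}$ --- is not established, and your main argument for it is backwards. You compare the minimal-$L^2$-norm element of the \emph{discrete} constraint set with the minimal-norm element of the \emph{continuous} constraint set; since the discrete set is contained in the continuous one, minimizing over it gives a \emph{larger} value, i.e.\ $\Vert\taubold\Vert_{0,\Omega} \le \Vert\tauboldn\Vert_{0,\Omega}$, the opposite of the inequality you assert. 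The fallback you sketch (elementwise stable right-inverses of the divergence with matching normal traces plus a partition-of-unity summation) is not carried out, and it is exactly the kind of argument whose constants typically degrade with $\p$ and with the element geometry; since $\h$- and $\p$-independence of $\betan$ is the entire content of the theorem, this cannot be left as a black box.

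The paper sidesteps interpolation altogether by exploiting the definition of the local spaces $\Sigmaboldn(\E)$ in \eqref{local:mixed:VES}: their elements \emph{are} solutions of div-rot systems with polynomial data. One therefore defines $\tauboldn$ directly inside $\Sigmaboldnz$ by prescribing $\n\cdot\tauboldn=0$ on all internal edges and on $\GammaN$, $\div(\tauboldn)=\vn$, $\rot(\tauboldn)=0$, and a \emph{constant} normal trace $\ctaubold = \vert\GammaD\vert^{-1}\int_\Omega\vn$ on $\GammaD$, chosen so that the compatibility condition $\int_{\partial\Omega}\n\cdot\tauboldn=\int_\Omega\div(\tauboldn)$ holds; no projection or interpolation error ever appears. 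A Friedrichs-type inequality then gives $\Vert\tauboldn\Vert_{0,\Omega}\le\cOmega\left(\Vert\n\cdot\tauboldn\Vert_{0,\partial\Omega}+\Vert\div(\tauboldn)\Vert_{0,\Omega}\right)$, and the boundary term equals $\vert\GammaD\vert^{1/2}\vert\ctaubold\vert\le\vert\Omega\vert^{1/2}\vert\GammaD\vert^{-1/2}\Vert\vn\Vert_{0,\Omega}$, which is precisely where the first summand in $\betan^{-1}$ comes from --- not from a Poincar\'e inequality for an auxiliary potential $\phi$ as in your sketch. If you want to salvage your plan, replace the minimal-norm comparison by this direct div-rot construction in the virtual element space.
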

\begin{proof}
With each~$\vn \in \Vn$, we associate a function~$\tauboldn \in \Sigmaboldnz$ as follows:
for each~$\E \in \taun$, introduce a local~$\tauboldn$ satisfying
\[
\begin{cases}
\div \tauboldn = \vn 				    & \text{in } \E\\
\rot \tauboldn = 0 					    & \text{in } \E\\
\nE \cdot  \tauboldn  = \ctaubolde       & \text{on } \e \quad \forall \e \in \EE,\; \e \not \subset \GammaD \cup \GammaN \\
\nE \cdot  \tauboldn  = \ctauboldGammaD  & \text{on } \e \quad \forall \e \subset\GammaD\\
\nE \cdot  \tauboldn  = 0                & \text{on } \e \quad \forall \e \subset\GammaN.
\end{cases}
\]
Above, $\ctauboldGammaD$ is defined as the following global constant over~$\GammaD$:
\begin{equation} \label{ctau}
\ctaubold = \frac{1}{\vert \GammaD \vert} \int_{\Omega} \vn,
\end{equation}
whereas~$\ctaubolde$ are piecewise constant functions over the interior skeleton such that two properties are satisfied:
(\emph{i}) the compatability conditions of the above problems are satisfied;
(\emph{ii}) $\ctaubolde$ is single valued on each interior edge~$\e$.
It is possible to fix such constant values as an easy consequence of Gauss' formula for graphs.

This second property entails that we can define a global~$\tauboldn$ in~$\Sigmaboldnz$ as the solution to the global div-rot problem
\begin{equation} \label{div-rot:problem}
\begin{cases}
\div \tauboldn = \vn 				& \text{in } 	\Omega\\
\rot \tauboldn = 0 					& \text{in } 	\Omega\\
\nE \cdot  \tauboldn  = \ctaubold	& \text{on } 	\GammaD\\
\nE \cdot  \tauboldn  = 0	        & \text{on } 	\GammaN.\\
\end{cases}
\end{equation}
Applying~\cite[Remark p.~367]{costabel1990remark} and the Friedrichs' inequality~\cite[Corollary~3.51]{monk2003finite} yields
\[
\Vert \tauboldn \Vert_{0,\Omega} \le \cOmega \left(   \Vert \n \cdot  \tauboldn  \Vert_{0,\partial \Omega} + \Vert \div \tauboldn \Vert_{0,\Omega}   \right),
\]
where~$\cOmega$ is a positive constant depending only on~$\Omega$.

We deduce
\[
\begin{split}
\Vert \tauboldn \Vert_{0,\Omega} 	& \le \cOmega \left( \Vert \n \cdot  \tauboldn  \Vert_{0,\GammaD} + \Vert \div \tauboldn  \Vert_{0,\Omega}    \right)  \overset{\eqref{div-rot:problem}}{=} \cOmega \left( \vert \GammaD \vert^{\frac{1}{2}} \vert \ctaubold \vert + \Vert \div \tauboldn \Vert_{0,\Omega}    \right)\\
& \overset{\eqref{ctau}}{=} \cOmega \left(  \frac{1}{\vert \GammaD \vert^{\frac{1}{2}}} \left\vert  \int_\Omega \vn  \right\vert  + \Vert \div \tauboldn \Vert_{0,\Omega}   \right)\\
& \overset{\eqref{div-rot:problem}}{=} \cOmega \left(  \frac{1}{\vert \GammaD \vert^{\frac{1}{2}}} \left\vert  \int_\Omega \div \tauboldn \right\vert  + \Vert \div \tauboldn \Vert_{0,\Omega}   \right)
\le \cOmega \left( \frac{\vert \Omega \vert^{\frac{1}{2}}}{\vert \GammaD \vert^{\frac{1}{2}}} +1  \right)  \Vert \div \tauboldn  \Vert_{0,\Omega}.\\
\end{split}
\]
This implies
\begin{equation} \label{bound:denominator}
\Vert \tauboldn \Vert_{\div, \Omega} \le \left( \cOmega \frac{\vert \Omega \vert^{\frac{1}{2}}}{\vert \GammaD \vert^{\frac{1}{2}}} + \cOmega +1     \right) \Vert \div \tauboldn \Vert_{0,\Omega} =: \betan^{-1} \Vert \div \tauboldn \Vert_{0,\Omega}.
\end{equation}
Note that
\[
\Vert \vn \Vert_{0,\Omega} = \frac{(\vn,\vn)_{0,\Omega}}{\Vert \vn \Vert_{0,\Omega}} \overset{\eqref{div-rot:problem}}{=} \frac{(\vn, \div \tauboldn)_{0,\Omega}}{\Vert \div \tauboldn \Vert_{0,\Omega}}.
\]
Applying~\eqref{bound:denominator} to this identity, we deduce the discrete inf-sup condition
\[
\Vert \vn \Vert_{0,\Omega} \le \frac{1}{\betan} \frac{(\vn, \div \tauboldn)_{0,\Omega}}{\Vert \tauboldn \Vert_{\div, \Omega}}.
\]
\end{proof}
The discrete inf-sup condition~\eqref{discrete:inf-sup}, together with the definition of the continuity of the discrete bilinear forms~$\an(\cdot, \cdot)$ and~$\b(\cdot, \cdot)$,
and the coercivity of~$\an(\cdot, \cdot)$ on the discrete kernel~$\mathcal K_n$~\eqref{discrete:kernel}, is sufficient to prove the well-posedness of method~\eqref{VEM:mixed}; see~\cite{BrezziFortin}.
In fact, a lifting argument allows us to seek solutions in the discrete space~$\Sigmaboldnz$, i.e., solutions with zero normal trace on~$\GammaN$.

\begin{remark} \label{remark:inf_sup:BFM}
We have proved Theorem~\ref{theorem:inf-sup} assuming that~$\GammaD \ne \emptyset$, which is an assumption stipulated in Section~\ref{section:introduction}.
The case of pure Neumann boundary conditions has to be dealt with slightly differently. The primal virtual element space has to be endowed with a zero average constraint.
In order to prove the inf-sup condition one should proceed as in \cite[Theorem~4.2 and Corollary~4.3]{Brezzi-Falk-Marini}.
Besides, one ought to prove that the best approximant in mixed virtual element spaces converges optimally in terms of~$\h$ and~$\p$ to a target function, so that the inf-sup constant is $\p$-robust.
This can be indeed proven by defining the best approximation in virtual element spaces as the degrees of freedom interpolant of the continuous function to approximate.
However, whatever boundary conditions we pick, the discrete inf-sup constant is $\p$-robust.
To the best of our knowledge, this is not the case in the discontinuous Galerkin setting for mixed problems; see, e.g., \cite[Section~4.2]{SchoetzauWihler_hpMixed}.
Of course, the price to pay is the $\p$ dependence in the stability estimates; see~\eqref{stability:bounds:primal} and Proposition~\ref{proposition:stab:mixed}.
\eremk
\end{remark}

\section{The hypercircle method for the VEM} \label{section:apos}
The aim of the present section is to construct an equilibrated a posteriori error estimator and to prove lower and upper bounds of such error estimator in terms of the exact error. 
In Section~\ref{subsection:identity}, we show an identity, which is the basic tile of the a posteriori error analysis and exhibit the equilibrated error estimator.
Next, in Sections~\ref{subsection:upper_bound} and~\ref{subsection:lower_bound}, we show its reliability and efficiency.

Throughout, we assume that each local space has a fixed degree of accuracy~$\p$.
The case of variable degree of accuracy is dealt with by substituting~$\p$ with a local~$\pE$ on each element~$\E$ of~$\taun$.
This is reflected in estimates~\eqref{global:A} and~\eqref{global:B}, as well as in the definition of the two stabilizations in~\eqref{practical:stab:primal} and~\eqref{practical:stab:mixed}.
In the residual estimator setting it is mandatory to demand that neighbouring elements have comparable degree of accuracy, see~\cite[equation (4)]{hpVEMapos}.
This condition is not needed in the analysis contained in the present paper.
For more details on the variable degree case, see also Remarks~\ref{remark:hp:primal} and~\ref{remark:hp:mixed}.

\subsection{The equilibrated a posteriori error estimator} \label{subsection:identity}
Given~$\utilde$, $\sigmabold$, $\utilden$, and $\sigmaboldn$ the solutions to~\eqref{primal:formulation}, \eqref{mixed:formulation}, \eqref{VEM:primal}, and~\eqref{VEM:mixed}, respectively,
we observe that
\begin{equation} \label{identity:contazzi}
\Vert \k^{\frac{1}{2}} (\nabla \utilde - \nabla \utilden) \Vert^2_{0,\Omega} + \Vert \k^{-\frac{1}{2}}(\sigmabold - \sigmaboldn) \Vert^2_{0,\Omega} = \Vert \k^{\frac{1}{2}}\nabla \utilden + \k^{-\frac{1}{2}}\sigmaboldn \Vert^2_{0,\Omega} -2 \int_\Omega \nabla (\utilde - \utilden) \cdot (\sigmabold - \sigmaboldn).
\end{equation}
In order to show this identity, we observe that~\eqref{strong:identity} entails
\[
\begin{split}
& \Vert \k^{\frac{1}{2}}\nabla \utilden + \k^{-\frac{1}{2}}\sigmaboldn \Vert^2_{0,\Omega} -2 \int_\Omega \nabla (\utilde - \utilden) \cdot (\sigmabold - \sigmaboldn)\\
&= \int_\Omega \k^{\frac{1}{2}} (\nabla \utilden - \nabla \utilde) \cdot \left( \k^{\frac{1}{2}} \nabla \utilden + \k^{-\frac{1}{2}} \sigmaboldn  \right) + \int_\Omega (  \k^{-\frac{1}{2}} \sigmaboldn + \k^{\frac{1}{2}}\nabla \utilde) \cdot \left( \k^{\frac{1}{2}} \nabla \utilden + \k^{-\frac{1}{2}} \sigmaboldn  \right)\\
& \quad - 2 \int_\Omega \k^{\frac{1}{2}} \nabla (\utilde - \utilden) \cdot  \k^{-\frac{1}{2}} (\sigmabold - \sigmaboldn)\\
& = \int_\Omega \k^{\frac{1}{2}} (\nabla \utilde - \nabla \utilden) \cdot \left( -\k^{\frac{1}{2}} \nabla \utilden - \k^{-\frac{1}{2}} \sigmaboldn + \k^{\frac{1}{2}} \nabla \utilde + \k^{-\frac{1}{2}} \sigmaboldn    \right)\\
& \quad + \int_\Omega \k^{-\frac{1}{2}} (\sigmabold - \sigmaboldn) \cdot \left( -\k^{\frac{1}{2}} \nabla \utilden - \k^{-\frac{1}{2}} \sigmaboldn + \k^{-\frac{1}{2}} \sigmabold + \k^{\frac{1}{2}} \nabla \utilden    \right)\\
& = \int_\Omega \k (\nabla \utilde - \nabla \utilden) \cdot  (\nabla \utilde - \nabla\un) + \int_\Omega \k^{-1} (\sigmabold - \sigmaboldn) \cdot (\sigmabold - \sigmaboldn),
\end{split}
\]
which is~\eqref{identity:contazzi}.

We rewrite the last term on the right-hand side of~\eqref{identity:contazzi} using an elementwise integration by parts: for all~$\E \in \taun$,
\[
- \int_\E \nabla (\utilde - \utilden) \cdot (\sigmabold - \sigmaboldn) = \int_\E (\utilde - \utilden) \,\div(\sigmabold - \sigmaboldn) - \int_{\partial \E} (\utilde - \utilden) \, \n \cdot(\sigmabold - \sigmaboldn) .
\]
We reshape the first term on the right-hand side.
Recalling that~$\div \sigmabold = \f$ and~$\div \sigmaboldn = \Pizpmo \f$, see~\eqref{identity:div},
we write
\[
\int_\E (\utilde - \utilden) \,(\div(\sigmabold - \sigmaboldn)) = \int_\E (u-\utilden) (\f-\Pizpmo \f) .
\]
We collect all the contributions and deduce that, for every piecewise discontinuous polynomial~$\qpmo$ of degree~$\p-1$ over~$\taun$,
\begin{equation} \label{equation:star}
\begin{split}
&\Vert \k^{\frac{1}{2}}\nabla (\utilde-\utilden) \Vert_{0,\Omega}^2 + \Vert \k^{-\frac{1}{2}} (\sigmabold - \sigmaboldn) \Vert^2_{0,\Omega} \\
& = \Vert \k^{\frac{1}{2}}\nabla \utilden + \k^{-\frac{1}{2}}\sigmaboldn \Vert_{0,\Omega}^2 + 2\int_\Omega (\utilde - \utilden - \qpmo) (\f - \Pizpmo \f)  \\
& \quad - 2 \sum_{\E \in \taun} \int_{\partial \E} (\utilde - \utilden) \n \cdot(\sigmabold- \sigmaboldn).
\end{split}
\end{equation}
The internal interface contributions appearing in the last term on the right-hand side of~\eqref{equation:star} are zero. The virtual element spaces have been tailored so that this property is fulfilled.
Furthermore, the boundary contributions disappear thanks to the assumption (\textbf{D}).

Thus, we write
\begin{equation} \label{important_equation:apos}
\begin{split}
&\Vert \k^{\frac{1}{2}}\nabla (\utilde-\utilden) \Vert_{0,\Omega}^2 + \Vert \k^{-\frac{1}{2}} (\sigmabold - \sigmaboldn) \Vert^2_{0,\Omega} \\
& = \Vert \k^{\frac{1}{2}}\nabla \utilden + \k^{-\frac{1}{2}}\sigmaboldn \Vert_{0,\Omega}^2 + 2\int_\Omega (\utilde - \utilden - \qpmo) (\f - \Pizpmo \f) . \\
\end{split}
\end{equation}
In the two forthcoming Sections~\ref{subsection:upper_bound} and~\ref{subsection:lower_bound},
we show upper and lower bounds on the right-hand side of~\eqref{important_equation:apos}. This will give rise to a natural choice for the equilibrated error estimator.
Henceforth, we refer to the square root of the left-hand side of~\eqref{important_equation:apos} as to the exact error of the method.
In particular, the exact error is the square root of the sum of the square of the error of the primal~\eqref{VEM:primal} and mixed~\eqref{VEM:mixed} methods.

\paragraph*{The error estimator.}
Since~$\Vert \k^{\frac{1}{2}}\nabla \utilden + \k^{-\frac{1}{2}}\sigmaboldn \Vert_{0,\Omega}$ is not computable, we propose the local
\begin{equation} \label{local_EE}
\begin{split}
\etahypE ^2	& := \Vert \k^{\frac{1}{2}} \nabla \Pinablap \utilden + \k^{-\frac{1}{2}} \Piboldzp \sigmaboldn \Vert^2_{0,\E} \\
				& \quad +  \left[ \StildeE((I-\Pinablap)\utilden, (I-\Pinablap)\utilden)  + \SE( (\Ibold-\Piboldzp)\sigmaboldn, (\Ibold-\Piboldzp)\sigmaboldn )  \right],\\
\end{split}
\end{equation}
and global error estimators
\begin{equation} \label{global_EE}
\etahyp^2 : = \sum_{\E \in \taun} \etahypE^2.
\end{equation}

\subsection{Reliability} \label{subsection:upper_bound}
From~\eqref{important_equation:apos}, we deduce the following upper bound:
\[
\begin{split}
& \Vert \k^{\frac{1}{2}}\nabla (\utilde - \utilden) \Vert^2_{0,\Omega} 	+ \Vert \k^{-\frac{1}{2}} (\sigmabold - \sigmaboldn) \Vert^2_{0,\Omega} \\
& \le 2 \big( \Vert \k^{\frac{1}{2}} (\nabla \utilden - \nabla \Pinablap \utilden) \Vert^2_{0,\Omega} + \Vert \k^{\frac{1}{2}}  \nabla \Pinablap \utilden + \k^{-\frac{1}{2}}\Piboldzp \sigmaboldn \Vert^2_{0,\Omega} + \Vert \k^{-\frac{1}{2}} (\Piboldzp \sigmaboldn - \sigmaboldn) \Vert^2_{0,\Omega} \big) \\
& \quad  +2 \int_\Omega (\utilde - \utilden - \qpmo) (\f - \Pizpmo \f) \quad \quad \forall \qpmo \in \mathcal S^{\p-1,-1} (\Omega, \taun).
\end{split}
\]
Set~$\qpmo$ as the piecewise~$L^2$ projection of~$u- \utilden$ over~$\taun$. Using standard~$\h$- and~$\p$- approximation estimates, we deduce
\[
\begin{split}
\int_\Omega 	& (\utilde - \utilden - \qpmo) (\f - \Pizpmo \f)  \le \sum_{\E \in \taun} \Vert \utilde - \utilden - \qpmo \Vert_{0,\E} \Vert \f - \Pizpmo \f \Vert_{0,\E} \\
			& \le \max_{\E \in \taun} \cB(\E)   \sum_{\E \in \taun} \frac{\hE}{\p} \vert \utilde - \utilden \vert_{1,\E} \Vert \f - \Pizpmo \f \Vert_{0,\E}\\
			& \le \max_{\E \in \taun} \cB(\E)  \vert \utilde - \utilden \vert_{1,\Omega} \left( \sum_{\E \in \taun} \frac{\hE^2}{\p^2} \Vert \f - \Pizpmo \f \Vert_{0,\E} ^2  \right)^{\frac{1}{2}}, \\
\end{split}
\]
where~$\cB(\E)$ is the positive constant appearing in the~$\h$- and~$\p$-approximation estimates of, e.g., \cite[Lemma~4.5]{babuskasurihpversionFEMwithquasiuniformmesh}.

Young's inequality entails
\[
\begin{split}
& \int_\Omega	(\utilde - \utilden - \qpmo) (\f - \Pizpmo \f)  \\
& \le 4 \varepsilon  \left( \max_{\E \in \taun} \cB(\E)  \right)^2 \vert \utilde - \utilden \vert_{1,\Omega}^2 + \frac{1}{4 \varepsilon}  \sum_{\E \in \taun} \frac{\hE^2}{\p^2} \Vert \f - \Pizpmo \f \Vert_{0,\E} ^2   \\
& \le 4 k_*^{-\frac{1}{2}} \varepsilon  \left( \max_{\E \in \taun} \cB(\E)  \right)^2 \Vert \k^{\frac{1}{2}} \nabla (\utilde - \utilden) \Vert_{0,\Omega}^2 + \frac{1}{4 \varepsilon}  \sum_{\E \in \taun} \frac{\hE^2}{\p^2} \Vert \f - \Pizpmo \f \Vert_{0,\E} ^2  \\
\end{split}
\]
for all~$\varepsilon >0$, where~$c_B(\E)$ denotes the best $\h\p$-approximation constant on element~$\E$.

Recalling~\eqref{strong:identity} and setting
\[ \varepsilon = \frac{k_*^{\frac{1}{2}}}{16 (\max_{\E \in \taun} c_B(\E) )^2},\]
we obtain
\[
\begin{split}
& \Vert \k^{\frac{1}{2}}\nabla (\utilde - \utilden) \Vert^2_{0,\Omega} 	+ \Vert \k^{-\frac{1}{2}} (\sigmabold - \sigmaboldn) \Vert^2_{0,\Omega} \\
& \le 4 \left(  \Vert \k^{\frac{1}{2}} (\nabla \utilden -  \nabla \Pinablap\utilden) \Vert^2_{0,\Omega}   + \Vert \k^{\frac{1}{2}}  \nabla \Pinablap\utilden + \k^{-\frac{1}{2}}\Piboldzp \sigmaboldn \Vert_{0,\Omega}^2         \right.\\
& \quad \quad \left. + \Vert \k^{-\frac{1}{2}} (\Piboldzp \sigmaboldn - \sigmaboldn ) \Vert_{0,\Omega}^2  \right) + 16 \frac{(\max_{\E \in \taun} \cB(\E))^2}{k_*^{\frac{1}{2}}} \sum_{\E \in \taun} \frac{\hE^2}{\p^2} \Vert \f - \Pizpmo \f \Vert^2_{0,\E}\\
& \le 4  \Vert \k^{\frac{1}{2}} \nabla \Pinablap \utilden + \k^{-\frac{1}{2}}\Piboldzp \sigmaboldn \Vert_{0,\Omega}^2 \\
& \quad + 4   \sum_{\E \in \taun} \max(\alphatilde_*^{-1}, \alpha_*^{-1}) \left[ \StildeE ( (I-\Pinablap) \utilden, (I-\Pinablap) \utilden)  + \SE( (\Ibold - \Piboldzp) \sigmaboldn , (\Ibold - \Piboldzp) \sigmaboldn   )     \right]\\
& \quad + 16  \frac{(\max_{\E \in \taun} \cB(\E))^2}{k_*^{\frac{1}{2}}} \sum_{\E \in \taun} \frac{\hE^2}{\p^2} \Vert \f - \Pizpmo \f \Vert^2_{0,\E} . \\
\end{split}
\]
Eventually, \eqref{identity:div} entails
\begin{equation} \label{reliability:global}
\begin{split}
&\Vert \k^{\frac{1}{2}}\nabla (\utilde - \utilden) \Vert^2_{0,\Omega} 	+ \Vert \k^{-\frac{1}{2}} (\sigmabold - \sigmaboldn) \Vert^2_{0,\Omega} \le  4 \Vert \k^{\frac{1}{2}} \nabla \Pinablap \utilden + \k^{-\frac{1}{2}}\Piboldzp \sigmaboldn \Vert_{0,\Omega}^2  \\
& \quad +  4 \sum_{\E \in \taun} \max(\alphatilde_*^{-1}, \alpha_*^{-1})  \left( \StildeE ( (I-\Pinablap) \utilden, (I-\Pinablap) \utilden)  + \SE( (\Ibold - \Piboldzp) \sigmaboldn , (\Ibold - \Piboldzp) \sigmaboldn   )     \right) \\
& \quad +  16 \frac{(\max_{\E \in \taun} \cB(\E))^2}{k_*^{\frac{1}{2}}} \sum_{\E \in \taun} \frac{\hE^2}{\p^2} \Vert \f - \div \sigmaboldn \Vert^2_{0,\E} . \\
\end{split}
\end{equation}
All the terms on the right-hand side are computable with the exception of the oscillation of the right-hand side. This term can be approximated at any precision employing a sufficiently accurate quadrature formula.
\medskip 

We have proven the following reliability result.
\begin{thm} \label{theorem:bounds:reliability}
Let the assumptions~(\textbf{G1}), (\textbf{G2}), (\textbf{K}), and (\textbf{D}) be valid.
Let~$\utilde$, and $u$ and~$\sigmabold$ be the solutions to~\eqref{primal:formulation} and~\eqref{mixed:formulation},
and~$\utilden$, and $\un$ and~$\sigmaboldn$ be the solutions to~\eqref{VEM:primal} and~\eqref{VEM:mixed}, respectively.
The following bound on the exact error in terms of the equilibrated error estimator and oscillation terms is valid:
\begin{equation} \label{global:A}
\begin{split}
&\Vert \k^{\frac{1}{2}}\nabla (\utilde - \utilden) \Vert^2_{0,\Omega} 	+ \Vert \k^{-\frac{1}{2}} (\sigmabold - \sigmaboldn) \Vert^2_{0,\Omega} \\
& \le 4 \sum_{\E \in \taun} \max(\alphatilde_*^{-1}, \alpha_*^{-1})  \etahypE^2 + 16 \frac{(\max_{\E \in \taun} \cB(\E))^2}{k_*^{\frac{1}{2}}} \sum_{\E \in \taun} \frac{\hE^2}{\p^2} \Vert \f - \div \sigmaboldn \Vert^2_{0,\E} . \\
\end{split}
\end{equation}
Bound~\eqref{global:A} is fully explicit in terms of~$\h$ and~$\p$ where the $\p$-dependence is also contained in the stabilization constants.
\end{thm}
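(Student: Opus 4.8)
The plan is to turn the Prager--Synge identity~\eqref{important_equation:apos} into a one-sided estimate by bounding its two right-hand side terms from above. The quantity $\Vert \k^{\frac12}\nabla\utilden + \k^{-\frac12}\sigmaboldn\Vert_{0,\Omega}^2$ is not computable, so I would insert the computable projections $\Pinablap\utilden$ and $\Piboldzp\sigmaboldn$, writing $\k^{\frac12}\nabla\utilden + \k^{-\frac12}\sigmaboldn = \k^{\frac12}(\nabla\utilden-\nabla\Pinablap\utilden) + (\k^{\frac12}\nabla\Pinablap\utilden + \k^{-\frac12}\Piboldzp\sigmaboldn) + \k^{-\frac12}(\sigmaboldn-\Piboldzp\sigmaboldn)$ and applying the triangle inequality. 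This produces, up to a fixed multiplicative constant, the computable term $\Vert \k^{\frac12}\nabla\Pinablap\utilden + \k^{-\frac12}\Piboldzp\sigmaboldn\Vert_{0,\Omega}^2$ together with the two projection defects $\Vert \k^{\frac12}(\nabla\utilden - \nabla\Pinablap\utilden)\Vert_{0,\Omega}^2$ and $\Vert \k^{-\frac12}(\sigmaboldn - \Piboldzp\sigmaboldn)\Vert_{0,\Omega}^2$.

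Next I would treat the oscillation cross-term $2\int_\Omega (\utilde - \utilden - \qpmo)(\f - \Pizpmo\f)$, which holds for every $\qpmo \in \mathcal S^{\p-1,-1}(\Omega,\taun)$, by choosing $\qpmo$ to be the elementwise $L^2$-projection of $\utilde - \utilden$. A Cauchy--Schwarz estimate on each $\E$ combined with the standard $\h\p$-approximation estimate with constant $\cB(\E)$ (e.g.\ \cite[Lemma~4.5]{babuskasurihpversionFEMwithquasiuniformmesh}) bounds $\Vert \utilde - \utilden - \qpmo\Vert_{0,\E}$ by $\cB(\E)\frac{\hE}{\p}\vert\utilde - \utilden\vert_{1,\E}$; summing over elements with Cauchy--Schwarz in the element index then yields the bound $(\max_\E\cB(\E))\,\vert\utilde-\utilden\vert_{1,\Omega}\big(\sum_\E \frac{\hE^2}{\p^2}\Vert\f-\Pizpmo\f\Vert_{0,\E}^2\big)^{1/2}$. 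Young's inequality with a parameter $\varepsilon>0$, together with $\vert\utilde - \utilden\vert_{1,\Omega}^2 \le k_*^{-1}\Vert\k^{\frac12}\nabla(\utilde-\utilden)\Vert_{0,\Omega}^2$, splits this into a term proportional to $\varepsilon (\max_\E\cB(\E))^2 k_*^{-1/2}\Vert\k^{\frac12}\nabla(\utilde-\utilden)\Vert_{0,\Omega}^2$ plus $\varepsilon^{-1}\sum_\E \frac{\hE^2}{\p^2}\Vert\f-\Pizpmo\f\Vert_{0,\E}^2$. With the explicit choice $\varepsilon = k_*^{1/2}/(16(\max_\E\cB(\E))^2)$ the coefficient of $\Vert\k^{\frac12}\nabla(\utilde-\utilden)\Vert_{0,\Omega}^2$ is strictly below $1$ (after accounting for the constant $2$ from~\eqref{important_equation:apos}), so that this contribution is absorbed into the left-hand side.

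It then remains to convert the projection defects into stabilization terms and collect constants. Since $(I-\Pinablap)\utilden \in \ker(\Pinablap)$ and $(\Ibold-\Piboldzp)\sigmaboldn \in \ker(\Piboldzp)$, the stabilization equivalences~\eqref{local_stab:primal} and~\eqref{local_stab:mixed} give elementwise $\Vert\k^{\frac12}\nabla(I-\Pinablap)\utilden\Vert_{0,\E}^2 \le \alphatilde_*^{-1}\StildeE((I-\Pinablap)\utilden,(I-\Pinablap)\utilden)$ and $\Vert\k^{-\frac12}(\Ibold-\Piboldzp)\sigmaboldn\Vert_{0,\E}^2 = \aE((\Ibold-\Piboldzp)\sigmaboldn,(\Ibold-\Piboldzp)\sigmaboldn) \le \alpha_*^{-1}\SE((\Ibold-\Piboldzp)\sigmaboldn,(\Ibold-\Piboldzp)\sigmaboldn)$. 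Adding the computable norm term recovers exactly the local estimator $\etahypE^2$ of~\eqref{local_EE} multiplied by $\max(\alphatilde_*^{-1},\alpha_*^{-1})$; summing over $\E \in \taun$ yields~\eqref{reliability:global}, and using $\div(\sigmaboldn) = \Pizpmo\f$ from~\eqref{identity:div} in the oscillation term produces~\eqref{global:A}. The bound is $\h$- and $\p$-explicit because the factor $\frac{\hE}{\p}$ is the genuine order of the $\h\p$-approximation estimate for $\qpmo$, while the $\p$-dependence hidden in $\alphatilde_*$ and $\alpha_*$ is quantified in~\eqref{bounds:stab_primal} and Proposition~\ref{proposition:stab:mixed}.

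The only genuinely delicate point is the $\varepsilon$-bookkeeping: one must check that, after distributing the constant from~\eqref{important_equation:apos}, the constant from the triangle split, and the $k_*$-weighting relating $\vert\cdot\vert_{1,\Omega}$ to $\Vert\k^{\frac12}\nabla\cdot\Vert_{0,\Omega}$, the coefficient multiplying $\Vert\k^{\frac12}\nabla(\utilde-\utilden)\Vert_{0,\Omega}^2$ on the right is strictly less than $1$, so that the absorption step is legitimate and leaves the stated factors $4$ and $16(\max_\E\cB(\E))^2 k_*^{-1/2}$. Everything else is routine triangle-inequality and stabilization-equivalence bookkeeping, and in fact the whole argument is already carried out in the discussion preceding the statement.
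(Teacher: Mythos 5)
Your proposal is correct and follows essentially the same route as the paper: the Prager--Synge identity~\eqref{important_equation:apos}, insertion of the projections $\Pinablap$ and $\Piboldzp$ via the triangle inequality, the choice of $\qpmo$ as the elementwise $L^2$-projection with the $\h\p$-approximation bound and Young's inequality with $\varepsilon = k_*^{1/2}/(16(\max_\E\cB(\E))^2)$ to absorb the gradient term, conversion of the projection defects into stabilization terms via $\alphatilde_*^{-1}$ and $\alpha_*^{-1}$, and finally $\div(\sigmaboldn)=\Pizpmo\f$. As you note, this is exactly the computation carried out in the discussion preceding the theorem, so there is nothing to add.
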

\begin{remark}
In the standard finite element setting, see, e.g., \cite{braess2009equilibrated}, the first term on the right-hand side of~\eqref{reliability:global} reads
\[
\Vert \k^{\frac{1}{2}}\nabla \utilden + \k^{-\frac{1}{2}}\sigmaboldn \Vert_{0,\Omega},
\]
whereas the second and the third vanish. The fourth term is a higher-order oscillation term.
\end{remark}

\subsection{Efficiency} \label{subsection:lower_bound}
Here, we show an upper bound on the local equilibrated error estimator~$\etahypE$~\eqref{local_EE} in terms of the exact error. In other words, we prove the efficiency of the local equilibrated error estimator.

First, we focus on the norm of the projected discrete solutions, i.e., the first term on the right-hand side of~\eqref{local_EE}.
For all polygons~$\E$, using~\eqref{strong:identity} and the stability of orthogonal projectors, we get
\[
\begin{split}
\Vert \k^{\frac{1}{2}} \nabla \Pinablap \utilden + \k^{-\frac{1}{2}} \Piboldzp \sigmaboldn \Vert_{0,\E}^2	& \le 2 \left( \Vert \k^{\frac{1}{2}} \nabla \utilde -  \nabla \Pinablap \utilden \Vert_{0,\E}^2 + \Vert \k^{-\frac{1}{2}} (\sigmabold - \Piboldzp \sigmaboldn) \Vert^2_{0,\E} \right)\\
												& \le 4 \big( \Vert \k^{\frac{1}{2}} (\nabla \utilde - \nabla \Pinablap \utilde) \Vert^2_{0,\E} + \Vert \k^{\frac{1}{2}}  \nabla \Pinablap (\utilde-\utilden)  \Vert _{0,\E}^2  \\
												& \quad \quad + \Vert \k^{-\frac{1}{2}} (\sigmabold - \Piboldzp \sigmabold) \Vert^2_{0,\E} + \Vert \k^{-\frac{1}{2}} \Piboldzp (\sigmabold - \sigmaboldn) \Vert^2_{0,\E} \big)\\
												& \le 4 \big( \Vert \k^{\frac{1}{2}} (\nabla \utilde - \nabla \Pinablap\utilde) \Vert^2_{0,\E} + \Vert \k^{\frac{1}{2}} \nabla (\utilde-\utilden)  \Vert _{0,\E}^2  \\
												& \quad \quad + \Vert \k^{-\frac{1}{2}} (\sigmabold - \Piboldzp \sigmabold ) \Vert^2_{0,\E} + \Vert \k^{-\frac{1}{2}} (\sigmabold - \sigmaboldn )\Vert^2_{0,\E} \big).\\
\end{split}
\]
Next, we deal with the stabilization terms, i.e., the second and third terms on the right-hand side of~\eqref{local_EE}. We begin with the stabilization term stemming from the discretization of the primal formulation:
\[
\begin{split}
\StildeE ( (I-\Pinablap) \utilden, 	& (I-\Pinablap) \utilden) \le \alphatilde^* \Vert \k^{\frac{1}{2}} \nabla (I-\Pinablap) \utilden \Vert^2_{0,\E}\\
						& \le 2 \alphatilde^* \left[ \Vert \k^{\frac{1}{2}} \nabla (\utilde-\utilden) \Vert^2_{0,\E} + \Vert \k^{\frac{1}{2}}\nabla (\utilde-\Pinablap \utilde) \Vert^2_{0,\E}+ \Vert \k^{\frac{1}{2}}\nabla \Pinablap (\utilde-\utilden) \Vert^2_{0,\E} \right]\\
						& \le 4 \alphatilde^* \Vert \k^{\frac{1}{2}}\nabla (\utilde - \utilden) \Vert^2_{0,\E} + 2 \alphatilde^* \Vert \k^{\frac{1}{2}}\nabla (\utilde - \Pinablap u) \Vert^2_{0,\E}.\\
\end{split}
\]
Analogously, we show an upper bound on the stabilization term stemming from the discretization of the mixed formulation:
\[
\SE ( (\Ibold-\Piboldzp) \sigmaboldn, (\Ibold-\Piboldzp) \sigmaboldn) \le 4 \alpha^*  \Vert \k^{-\frac{1}{2}}(\sigmabold - \sigmaboldn) \Vert^2_{0,\E} + 2 \alpha^*  \Vert \k^{-\frac{1}{2}} (\sigmabold - \Piboldzp \sigmabold) \Vert^2_{0,\E}.
\]
Collecting the three estimates, we get
\begin{equation} \label{efficiency:global}
\begin{split}
\etahypE^2 	& = \Vert \k^{\frac{1}{2}} \nabla \Pinablap \utilden + \k^{-\frac{1}{2}} \Piboldzp \sigmaboldn \Vert^2_{0,\E} \\
			&\quad  + \StildeE( (I-\Pinablap) \utilden, (I-\Pinablap) \utilden) + \SE ( (\Ibold-\Piboldzp) \sigmaboldn, (\Ibold-\Piboldzp) \sigmaboldn) \\
			& \le 4(1 +  \alphatilde^*) \Vert \k^{\frac{1}{2}}  \nabla (\utilde - \utilden) \Vert^2_{0,\E}  + 2 (2+ \alphatilde^* ) \Vert \k^{\frac{1}{2}}  \nabla (\utilde - \Pinablap \utilden) \Vert^2_{0,\E} \\
			& \quad + 4(1 + \alpha^*) \Vert \k^{-\frac{1}{2}} (\sigmabold - \sigmaboldn) \Vert^2_{0,\E}  + 2 (2+ \alpha^*) \Vert \k^{-\frac{1}{2}} ( \sigmabold  - \Piboldzp \sigmaboldn) \Vert^2_{0,\E} .\\
\end{split}
\end{equation}
We have proven the following efficiency result.
\begin{thm} \label{theorem:bounds:efficiency}
Let the assumptions~(\textbf{G1}), (\textbf{G2}), (\textbf{K}), and (\textbf{D}) be valid.
Let~$\utilde$, and $u$ and~$\sigmabold$ be the solutions to~\eqref{primal:formulation} and~\eqref{mixed:formulation}, respectively,
and~$\utilden$, and $\un$ and~$\sigmaboldn$ be the solutions to~\eqref{VEM:primal} and~\eqref{VEM:mixed}, respectively.
The following upper bound on the local equilibrated error estimator~$\etahypE$ in terms of the local exact error and best local polynomial approximation terms is valid: for every~$\E \in \taun$,
\begin{equation} \label{global:B}
\begin{split}
\etahypE^2 	& \le 4(1 +  \alphatilde^*) \Vert \k^{\frac{1}{2}}  \nabla (\utilde - \utilden) \Vert^2_{0,\E}  + 2 (2+ \alphatilde^* ) \Vert \k^{\frac{1}{2}}  \nabla (\utilde - \Pinablap \utilden) \Vert^2_{0,\E} \\
			& \quad + 4(1 + \alpha^*) \Vert \k^{-\frac{1}{2}} (\sigmabold - \sigmaboldn) \Vert^2_{0,\E}  + 2 (2+ \alpha^*) \Vert \k^{-\frac{1}{2}} ( \sigmabold  - \Piboldzp \sigmaboldn) \Vert^2_{0,\E} .\\
\end{split}
\end{equation}
Bound~\eqref{global:B} is fully explicit in terms of~$\h$ and~$\p$ where the $\p$-dependence is also contained in the stabilization constants.
\end{thm}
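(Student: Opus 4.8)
The plan is to bound separately each of the three contributions to $\etahypE^2$ in~\eqref{local_EE} and then add the resulting estimates. The only tools needed are the pointwise identity~\eqref{strong:identity}, namely $\nabla\utilde=-\k^{-1}\sigmabold$, the triangle inequality in the form $\Vert a+b\Vert_{0,\E}^2\le 2\Vert a\Vert_{0,\E}^2+2\Vert b\Vert_{0,\E}^2$, and the $\aE$- and $\atildeE$-stability of the projectors $\Piboldzp$ and $\Pinablap$ (so that $\Vert\k^{-\frac{1}{2}}\Piboldzp\taubold\Vert_{0,\E}\le\Vert\k^{-\frac{1}{2}}\taubold\Vert_{0,\E}$ and $\Vert\k^{\frac{1}{2}}\nabla\Pinablap v\Vert_{0,\E}\le\Vert\k^{\frac{1}{2}}\nabla v\Vert_{0,\E}$, using assumption~(\textbf{K}) that $\k$ is piecewise constant). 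No inf-sup or div-rot machinery enters.

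For the first term $\Vert\k^{\frac{1}{2}}\nabla\Pinablap\utilden+\k^{-\frac{1}{2}}\Piboldzp\sigmaboldn\Vert_{0,\E}^2$, I would insert $\pm\nabla\utilde$ in the first summand and, via~\eqref{strong:identity}, $\mp\k^{-1}\sigmabold$ in the second, so that after the elementary splitting the term is dominated by $\Vert\k^{\frac{1}{2}}(\nabla\utilde-\nabla\Pinablap\utilden)\Vert_{0,\E}^2+\Vert\k^{-\frac{1}{2}}(\sigmabold-\Piboldzp\sigmaboldn)\Vert_{0,\E}^2$. Splitting each of these once more by inserting $\pm\nabla\Pinablap\utilde$ (resp.\ $\pm\Piboldzp\sigmabold$) and applying projector stability to turn $\Vert\k^{\frac{1}{2}}\nabla\Pinablap(\utilde-\utilden)\Vert_{0,\E}$ into $\Vert\k^{\frac{1}{2}}\nabla(\utilde-\utilden)\Vert_{0,\E}$ (resp.\ $\Vert\k^{-\frac{1}{2}}\Piboldzp(\sigmabold-\sigmaboldn)\Vert_{0,\E}$ into $\Vert\k^{-\frac{1}{2}}(\sigmabold-\sigmaboldn)\Vert_{0,\E}$) yields a bound by the local exact error together with the best-approximation quantities $\Vert\k^{\frac{1}{2}}(\nabla\utilde-\nabla\Pinablap\utilde)\Vert_{0,\E}$ and $\Vert\k^{-\frac{1}{2}}(\sigmabold-\Piboldzp\sigmabold)\Vert_{0,\E}$.

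For the two stabilization terms I would use the upper inequalities in~\eqref{local_stab:primal} and~\eqref{local_stab:mixed}. Thus $\StildeE((I-\Pinablap)\utilden,(I-\Pinablap)\utilden)\le\alphatilde^*\atildeE((I-\Pinablap)\utilden,(I-\Pinablap)\utilden)=\alphatilde^*\Vert\k^{\frac{1}{2}}\nabla(I-\Pinablap)\utilden\Vert_{0,\E}^2$; writing $(I-\Pinablap)\utilden=(\utilde-\utilden)-(\utilde-\Pinablap\utilde)-\Pinablap(\utilde-\utilden)$, applying the triangle inequality and projector stability once more gives a bound in terms of $\Vert\k^{\frac{1}{2}}\nabla(\utilde-\utilden)\Vert_{0,\E}^2$ and $\Vert\k^{\frac{1}{2}}\nabla(\utilde-\Pinablap\utilde)\Vert_{0,\E}^2$. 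The flux stabilization $\SE((\Ibold-\Piboldzp)\sigmaboldn,(\Ibold-\Piboldzp)\sigmaboldn)$ is handled in the same way, now with $\aE$ and $\Piboldzp$, producing $\Vert\k^{-\frac{1}{2}}(\sigmabold-\sigmaboldn)\Vert_{0,\E}^2$ and $\Vert\k^{-\frac{1}{2}}(\sigmabold-\Piboldzp\sigmabold)\Vert_{0,\E}^2$. Collecting the three bounds and tracking the multiplicative constants gives exactly~\eqref{efficiency:global}, which is the claimed~\eqref{global:B}. The argument is entirely routine; the only point requiring mild care is the bookkeeping of the constants $4(1+\alphatilde^*)$, $2(2+\alphatilde^*)$, $4(1+\alpha^*)$, $2(2+\alpha^*)$ that emerge from the nested elementary inequalities, and there is no genuine obstacle.
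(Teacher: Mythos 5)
Your proposal is correct and follows essentially the same route as the paper's proof: the first term of $\etahypE^2$ is handled by inserting the vanishing quantity $\k^{\frac{1}{2}}\nabla\utilde+\k^{-\frac{1}{2}}\sigmabold$ from~\eqref{strong:identity}, splitting again via $\pm\nabla\Pinablap\utilde$ and $\pm\Piboldzp\sigmabold$, and invoking the (weighted) stability of the orthogonal projectors, while the two stabilization terms are bounded through the upper inequalities in~\eqref{local_stab:primal} and~\eqref{local_stab:mixed} followed by the same three-term decomposition. The bookkeeping of the constants matches~\eqref{efficiency:global}, so nothing is missing.
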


\section{Numerical results} \label{section:nr}
In this section, we introduce an adaptive algorithm and an $\h\p$-refinement strategy in order to show the performance of the hypercircle method and to compare it with that of the residual based a posteriori approach.

We ought to compare the equilibrated error estimator~$\etahyp$ in~\eqref{global_EE} with the exact error of the method; see the left-hand side of~\eqref{global:A}.
However, since functions in virtual element spaces are not known in closed-form but only through their degrees of freedom, we cannot compute the exact errors.
Rather, we compare the equilibrated error estimator with the following computable approximation of the error:
given the solutions~$\sigmabold$, $u$, and $\sigmaboldn$, $\un$ to mixed problem~\eqref{mixed:formulation} and VEM~\eqref{VEM:mixed}, respectively,
define the approximate error for mixed VEM~\eqref{VEM:mixed} as
\begin{equation} \label{computable:error:mixed}
\left(  \Vert \k^{\frac{1}{2}} (\nabla u -  \nabla \Pinablap\un)    \Vert^2_{0,\Omega}  + \Vert \k^{-\frac{1}{2}} (\sigmabold - \Piboldzp \sigmaboldn ) \Vert_{0,\Omega}^2   \right)^{\frac{1}{2}}.
\end{equation}
The approximate error~\eqref{computable:error:mixed} converges with the same $\h$- and~$\p$- convergence rate as the exact error.
To see this, it suffices to use arguments analogous to those, e.g., in~\cite[Section~5]{hpVEMapos}.

Since we shall compare the performance of the equilibrated error estimator with that of the residual error estimator, we introduce a computable approximation of the error for the primal formulation as well:
given the solutions~$\utilde$ and~$\utilden$ to the primal problem~\eqref{primal:formulation} and VEM~\eqref{VEM:primal}, respectively,
define the approximate error for primal VEM~\eqref{VEM:primal} as
\begin{equation} \label{computable:error:primal}
\Vert \k^{\frac{1}{2}} (\nabla \utilde -  \nabla \Pinablap\utilden)    \Vert_{0,\Omega}  .
\end{equation}
The approximate error~\eqref{computable:error:primal} converges with the same $\h$- and~$\p$- convergence rate as the exact error~$\Vert \k^{\frac{1}{2}} (\nabla u - \nabla \un) \Vert_{0,\Omega}$; see, e.g., \cite[Section~5]{hpVEMcorner}.

As stabilizations for the primal and mixed formulation, we use those defined in~\eqref{practical:stab:primal} and~\eqref{practical:stab:mixed}, respectively.
We fix shifted and scaled monomials~\cite[equation (4.4)]{VEMvolley} as a polynomial basis in the definition of the internal degrees of freedom~\eqref{internal:moments:primal} for the primal formulation.
As for the choice of polynomial bases for internal moments~\eqref{gradientDOF:flux} and~\eqref{orthogonalDOF:flux} of the mixed formulation, we use those described in~\cite[Proposition~2.1]{VEMbricksMixed}.

\begin{remark} \label{remark:high-poly}
For high polynomial degrees, these choices of the polynomial bases are not the most effective. Rather, we ought to consider some sort of orthogonalization of the polynomial bases,
as proposed in~\cite{fetishVEM} for the primal formulation.
Since this has not been investigated for the mixed formulation so far, we postpone the investigation of the effects of the choice of the polynomial bases on the performance of the method to future works.
\eremk
\end{remark}

\subsection{Experiments on the theoretical and practical stabilizations}
Before proceeding with the numerical experiments for the adaptive algorithm, we show that the $\p$-version of the method converges optimally in terms of~$\p$.
Moreover, we compare numerically the theoretical and practical stabilizations presented above.
As for the primal formulation stabilizations~\eqref{stab:primal} and~\eqref{practical:stab:primal}, this has already been investigated in~\cite[Section~2.2]{fetishVEM}.
Indeed, the method works extremely similarly even with a high degree of accuracy.

As for the mixed formulation stabilizations~\eqref{stab:mixed} and~\eqref{practical:stab:mixed}, we provide here numerical evidence that they deliver similar results.
To this aim, consider a coarse mesh with some nonconvex elements as that depicted in Figure~\ref{figure:comparison-stab-mixed} (left).
With the $\p$-version of mixed VEM~\eqref{VEM:mixed}, we approximate  the exact smooth solution
\begin{equation} \label{smooth-solution}
u(x,y) = \sin(\pi x) \sin(\pi y) \quad \text{ on } \Omega := (0,1)^2.
\end{equation}
In Figure~\ref{figure:comparison-stab-mixed} (right), we show the decay of error~\eqref{computable:error:mixed} versus the polynomial degree.

\begin{figure}  [h]
\centering
\begin{center}
\begin{tikzpicture}[scale=1.1]
\draw[black, thick, -] (0,0) -- (4,0) -- (4,4) -- (0,4) -- (0,0);
\draw[black, thick, -] (2,0) -- (2,4); \draw[black, thick, -] (0,2) -- (4,2);
\draw[black, thick, -] (0,1) -- (1,1.5); \draw[black, thick, -] (1,1.5) -- (2,1); \draw[black, thick, -] (2,1) -- (3,1.5); \draw[black, thick, -] (3,1.5) -- (4,1);
\draw[black, thick, -] (0,3) -- (1,3.5); \draw[black, thick, -] (1,3.5) -- (2,3); \draw[black, thick, -] (2,3) -- (3,3.5); \draw[black, thick, -] (3,3.5) -- (4,3);
 \draw[fill=white, draw=white] (0,-.45) circle (3pt);
\end{tikzpicture}
\quad \quad \quad
\includegraphics [angle=0, width=0.48\textwidth]{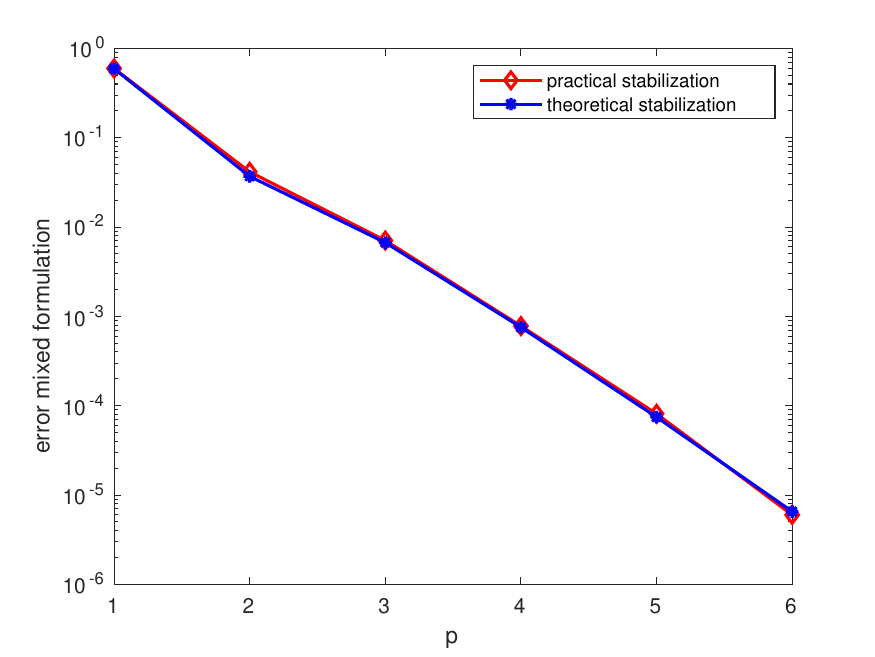}
\end{center}
\caption{(\emph{Left panel:}) a coarse mesh consisting of~$8$ elements, $4$ of which being nonconvex.
(\emph{Right panel:}) the $\p$-version of mixed VEM~\eqref{VEM:mixed} applied to the exact solution in~\eqref{smooth-solution}.
We employ theoretical~\eqref{stab:mixed} and practical~\eqref{practical:stab:mixed} stabilizations.}
\label{figure:comparison-stab-mixed}
\end{figure}
At the practical level, the two stabilizations deliver similar, indeed practically identical, results. Since the practical one is easier to implement and faster to run, we henceforth stick to it.
\medskip

\subsection{The test cases}
We test the performance of the virtual element method and the equilibrated error estimator on the following test cases.
\paragraph*{Test case 1.}
The first test case is defined on the L-shaped domain
\[
\Omega_1 = (-1,1)^2 \setminus \big \{ [0,1) \times(-1,0] \big\} .
\]
In polar coordinates centred at~$(0,0)$, the solution to this problem reads
\begin{equation} \label{u1}
 u_1(r,\theta) = r^{\frac{2}{3}} \sin \left (\frac{2}{3} \theta \right).
\end{equation}
The primal formulation of the problem we are interested in is such that we have: zero Dirichlet boundary conditions on the edges generating the re-entrant corner; suitable Neumann boundary conditions on all the other edges;
$\k=1$; zero right-hand side, since~$u_1$ is harmonic.

\paragraph*{Test case 2.}
The second test case is defined on the slit domain
\[
\Omega_2 := (-1,1)^2 \setminus \big\{ [0,1) \times \{0\}  \big \}.
\]
In polar coordinates centred at~$(0,0)$, the solution to this problem reads
\begin{equation} \label{u2}
u_2(r,\theta) = r^{\frac{1}{4}} \sin \left(\frac{1}{4} \theta \right).
\end{equation}
The primal formulation of the problem we are interested in is such that we have: suitable Neumann boundary conditions on the bottom edge of the slit;
Dirichlet boundary conditions on all the other edges; $\k=1$; zero right-hand side, since~$u_2$ is harmonic.
Note that the slit consists of two boundary edges: the bottom and the upper part of the slit.
\medskip

We depict the solutions to the two test cases in Figure~\ref{figure:test-cases}.
\medskip

\begin{figure}[h]
\centering
\includegraphics [angle=0, width=0.45\textwidth]{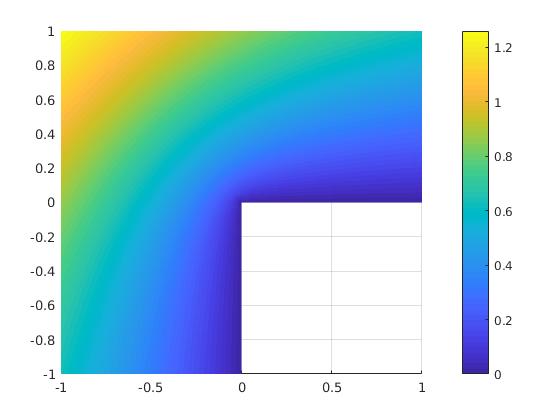}
\includegraphics [angle=0, width=0.45\textwidth]{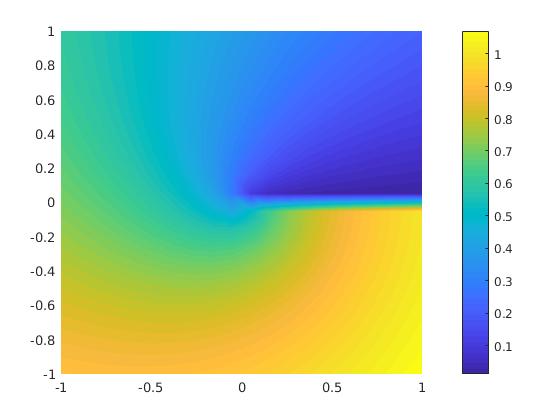}
\caption{Test cases. (\textit{Left panel:}) $u_1$ defined in~\eqref{u1}. (\textit{Right panel:}) $u_2$ defined in~\eqref{u2}.}
\label{figure:test-cases}
\end{figure}

The remainder of the section is organized as follows.
In Section~\ref{subsection:residual}, we recall the residual virtual element error estimator from~\cite{hpVEMapos}.
Section~\ref{subsection:efficiency} is devoted to analyze the behaviour of the effectivity index for the $\p$-version of VEM, when using the residual and the equilibrated error estimators.
The adaptive algorithm and the $\h\p$-refinements are described in Section~\ref{subsection:adaptive}, whereas the performance of the $\h\p$-adaptive algorithm is analyzed in Section~\ref{subsection:hp_hyper}.

\subsection{The residual error estimator} \label{subsection:residual}
In this section, we recall the residual error estimator derived in~\cite[Section~4]{hpVEMapos} for the $\h\p$-version of the virtual element method and its properties.
We assume that the diffusion coefficient~$\k$ is equal to~$1$, since this was the instance considered in~\cite{hpVEMapos}.

Given a mesh~$\taun$ and a distribution of degrees of accuracy as in Remarks~\ref{remark:hp:primal} and~\ref{remark:hp:mixed},
introduce the following local residual error estimators: given the solution~$\utilden$ to~\eqref{VEM:primal}, for all~$\E \in \taun$,
\[
\begin{split}
\etaResE^2 		&:=  \frac{\hE^2}{\p^2} \Vert \Delta \Pinablap \utilden + \Pitildez \f \Vert^2_{0,\E} + \frac{1}{2} \sum_{\e \in \EE,\, \e \not \subset \GammaD} \frac{\hE}{\p} \left\Vert \left\llbracket \n_\e \cdot \nabla \Pinablap \utilden \right\rrbracket    \right\Vert^2_{0,\e}\\
				& \quad + \SE( (I-\Pinablap) \utilden, (I-\Pinablap) \utilden).\\
\end{split}
\]
The global residual error estimator is defined as
\begin{equation} \label{residual:error_estimator}
\etaRes ^2 :=  \sum_{\E \in \taun} \etaResE^2.
\end{equation}
In~\cite[Theorem~1]{hpVEMapos}, the authors proved lower and upper bounds of the residual error estimator in terms of the error of the primal formulation.
Although such bounds are optimal in terms of the mesh size, they are suboptimal in terms of the degree of accuracy of the method. This resembles what happens in the finite element method framework;
see~\cite[Theorem 3.6]{MelenkWohlmuth_hpFEMaposteriori}.
The suboptimality is due to the use of polynomial inverse estimates when proving the efficiency.

\subsection{Effectivity index: residual versus equilibrated error estimators} \label{subsection:efficiency}
The aim of the present section is to investigate the behaviour of the effectivity indices of the residual~\eqref{residual:error_estimator} and equilibrated~\eqref{global_EE} error estimators.
In particular, we demonstrate the numerical $\p$-robustness of the latter.

We define the effectivity index of the two error estimators as follows:
\begin{equation} \label{efficiency:two_EE}
\begin{split}
& \Ihyp^2  := \frac{\etahyp^2}{ \Vert \k^{\frac{1}{2}} (\nabla u - \nabla \Pinablap \utilden) \Vert^2_{0,\Omega}}, \qquad
\Ires^2 := \frac{\etaRes^2}{\Vert \k^{\frac{1}{2}} (\nabla \utilde - \nabla \Pinablap \utilden) \Vert_{0,\Omega}^2}.
\end{split}
\end{equation}
We run the $\p$-version of the method with exact solution~$u_1$ defined in~\eqref{u1}.
As an underlying mesh, we fix a uniform Cartesian mesh with~$12$ elements.
The behaviour of efficiency indices~\eqref{efficiency:two_EE} is depicted in Figure~\ref{figure:efficiency_index}.

\begin{figure}  [h]
\centering
\includegraphics [angle=0, width=0.5\textwidth]{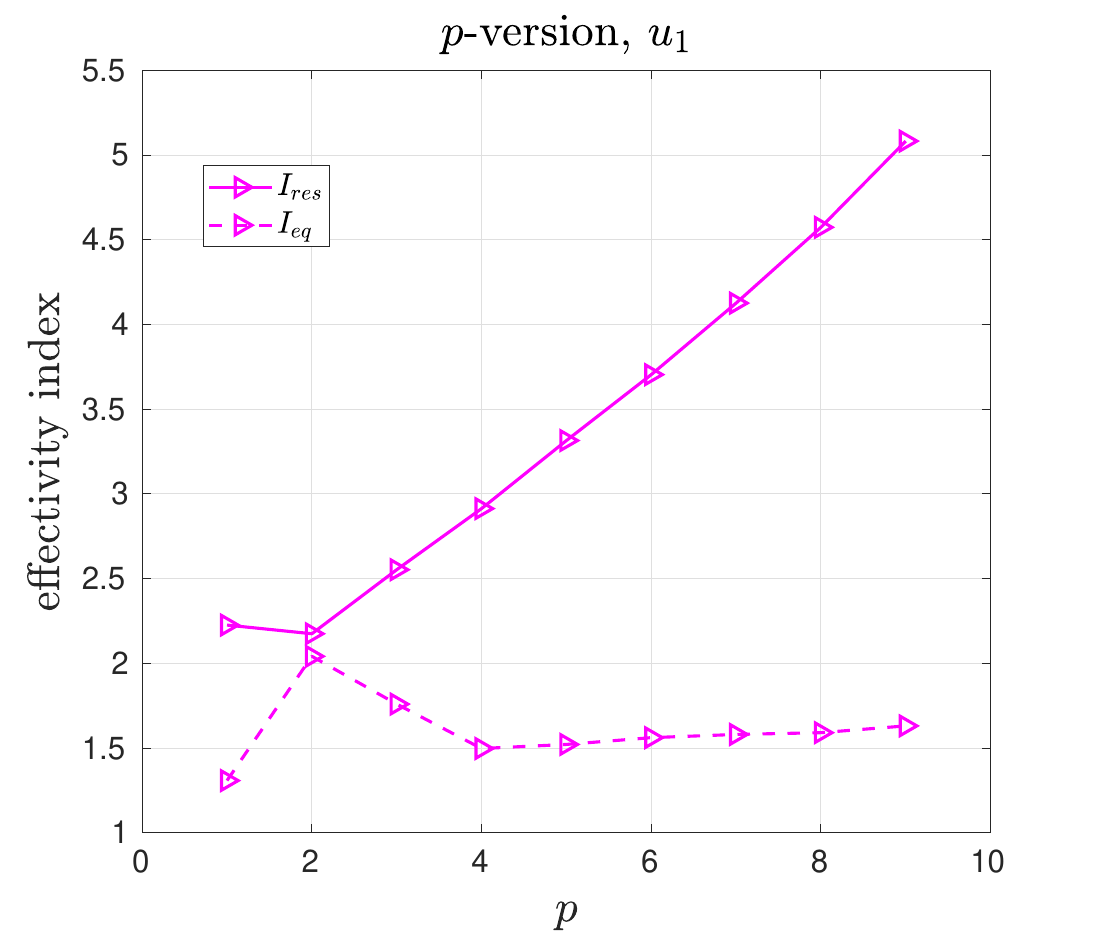}
\caption{Effectivity indices~$\Ires$ and~$\Ihyp$~\eqref{efficiency:two_EE} for the $\p$-version of the method,
using residual~$\etaRes$~\eqref{residual:error_estimator} and equilibrated~$\etahyp$~\eqref{global_EE} error estimators.
We consider the exact solution~$u_1$ in~\eqref{u1} and use a uniform Cartesian mesh consisting of~$12$ elements.}
\label{figure:efficiency_index}
\end{figure}

From Figure~\ref{figure:efficiency_index}, we observe that the effectivity index for the hypercircle method seems to be independent of~$\p$, differently from that of the residual error estimator.
On the one hand, this is in partial agreement with bounds~\eqref{reliability:global} and~\eqref{efficiency:global}. Here, no polynomial inverse estimates have been used.
On the other hand, the bounds depend on the stability constants of the method. As shown in~\eqref{bounds:stab_primal} and~\eqref{bounds:stab_mixed}, the stability constants might depend on~$\p$.
Notwithstanding, it seems that such bounds are crude, and the stability constants do not play a role in terms of~$\p$.
We performed analogous experiments on the test case~$u_2$ and obtained comparable results, which we omit for the sake of brevity.
Eventually, note that the effectivity index for the equilibrated error estimator remains close to~$1.5$.

\subsection{The adaptive algorithm and $\h\p$-adaptive mesh refinements} \label{subsection:adaptive}
In this section, we recall the structure of an adaptive algorithm, the meaning of $\h$- and $\p$-refinement, and how to choose between~$\h$- and $\p$-refinements.
The standard structure of an adaptive algorithm is
\begin{center}
\textbf{SOLVE} $\quad \longrightarrow \quad$ \textbf{ESTIMATE} $\quad \longrightarrow \quad$ \textbf{MARK} $\quad \longrightarrow \quad$ \textbf{REFINE}.
\end{center}
The remainder of this section is devoted to address the \emph{marking} and \emph{refining} steps. The latter consists in deciding whether to refine a marked element either in~$\h$ or in~$\p$.
Refining in~$\p$, the local space on an element~$\E$ means that local degree of accuracy~$\pE$ on~$\E$ is increased by one.
The design of the global space and its degrees of freedom is performed accordingly to Remarks~\ref{remark:hp:primal} and~\ref{remark:hp:mixed}.

We describe the $\h$-refinement in more details. Firstly, we anticipate that we shall employ Cartesian and triangular meshes, only.
This might seem idiosyncratic, as we claimed that we want an adaptive method working on general meshes.
However, the flexibility in employing polygons is exploited when refining squares or triangles and creating hanging nodes.
In the refining procedure, we define a geometric square as a geometrical entity with four straight edges. For instance, a polygon with five vertices and having two adjacent edges on the same line is a geometric square.
Analogously, we define a geometric triangle as a geometrical entity with three straight edges.
A geometric square is refined into four smaller geometric squares by connecting its centroid to the midpoints of the four straight edges; see Figure~\ref{figure:square:refinement} (left).
Instead, a geometric triangle is refined into four smaller geometric triangles as in Figure~\ref{figure:square:refinement} (right).

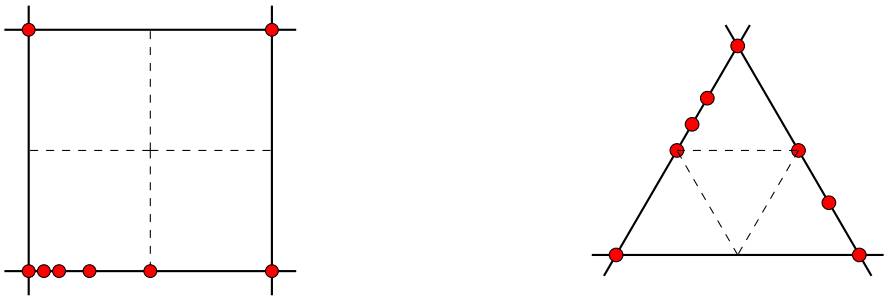
\begin{figure}[h]
\centering
\begin{minipage}{0.32\textwidth}
\begin{center}
\begin{tikzpicture}[scale=0.8]
\draw[black, thick, -] (-0.4,0) -- (4.4,0); \draw[black, thick, -] (0, -0.4) -- (0, 4.4); \draw[black, thick, -] (4,-.4) -- (4,4.4); \draw[black, thick, -] (-0.4,4) -- (4.4,4);
\draw[black, dashed] (2,2) -- (2,0);  \draw[black, dashed] (2,2) -- (2,4);  \draw[black, dashed] (2,2) -- (0,2);  \draw[black, dashed] (2,2) -- (4,2); 
\draw[fill=red] (0,0) circle (3pt); \draw[fill=red] (4,0) circle (3pt); \draw[fill=red] (0,4) circle (3pt); \draw[fill=red] (4,4) circle (3pt); \draw[fill=red] (2,0) circle (3pt); \draw[fill=red] (1,0) circle (3pt); \draw[fill=red] (0.5,0) circle (3pt); \draw[fill=red] (0.25,0) circle (3pt);
\end{tikzpicture}
\end{center}
\end{minipage}
\quad\quad\quad\quad \quad\quad\quad\quad
\begin{minipage}{0.32\textwidth}
\begin{center}
\begin{tikzpicture}[scale=3.2]
\draw[black, thick, -] (-.1,0) -- (1.1,0);  \draw[black, thick, -](1+.1*.5,-0.1*sqrt 3/2) -- (1/2-.1*.5, sqrt 3/2+.1*sqrt 3/2); \draw[black, thick, -] (1/2+.1*.5, sqrt 3/2 +.1*sqrt 3/2) -- (-.1*.5, -.1*sqrt 3/2);
\draw[fill=red] (0,0) circle (0.8pt); \draw[fill=red] (1,0) circle (.8pt); \draw[fill=red] (1/2, sqrt 3/2) circle (.8pt);
\draw[fill=red] (1/4, sqrt 3 /4) circle (0.8pt); \draw[fill=red] (3/4, sqrt 3/4) circle (0.8pt);
\draw[fill=red] (3/8, 3/8*sqrt 3) circle (0.8pt); \draw[fill=red] (5/16, 5/16*sqrt 3) circle (0.8pt); \draw[fill=red] (7/8, sqrt 3/ 8) circle (0.8pt);
\draw[black, dashed] (1/4, sqrt 3 /4) -- (3/4, sqrt 3/4); \draw[black, dashed] (3/4, sqrt 3/4) -- (1/2, 0); \draw[black, dashed] (1/2, 0) -- (1/4, sqrt 3 /4);
\end{tikzpicture}
\end{center}
\end{minipage}
\caption{(\emph{Left panel:}) Refining a geometric square with~$8$ vertices, $8$ edges, and~$4$ straight edges.
The refinement is performed by connecting the centroid to the midpoints of the four straight edges.
(\emph{Right panel:}) Refining a geometric triangle with~$8$ vertices, $8$ edges, and~$3$ straight edges.
The refinement is performed by subdividing the geometric triangle into~$4$ geometric subtriangles with vertices given by the original vertices and the midpoints of the three straight edges.
In red, we depict the vertices of the polygon under consideration.}
\label{figure:square:refinement}
\end{figure}
Observe that $\h$-refinements of Cartesian and triangular meshes lead to meshes always consisting of geometric squares and triangles, respectively.
Note that a refinement in presence of a hanging node in the proper place for the $\h$-refinement is not creating an additional node.

We are left with the description of the marking strategy. Firstly, we describe how to choose the elements to mark. Secondly, we set a way to decide whether to mark for $\h$- or $\p$-refinement.
Marking elements is a rather standard procedure. We mark for refinement all the elements~$\E \in \taun$ such that, given a positive parameter~$\sigma \in (0,1)$,
\[
\etahypE \ge \sigma \, \etahypbar :=\sigma \frac{\etahyp}{\text{card}(\taun)}.
\]
As for the $\h$- or $\p$-marking, we could use several strategies; see for instance the survey paper~\cite{mitchell2011survey}.
In words, the idea behind this choice resides in refining the mesh on the marked elements where the solution is expected to be singular.
An increase of the degree of accuracy is performed on the marked elements where the solution is expected to be smooth.
Amongst the various techniques available in the literature, we follow the approach of Melenk and Wohlmuth, see~\cite[Section~4]{MelenkWohlmuth_hpFEMaposteriori},
which is based on comparing the actual equilibrated error estimator with a predicted one.

In the forthcoming numerical experiments, we set~$\sigma=1$, $\lambda=0.2$, $\gamma_\h=1$, $\gamma_\p=1$, and~$\gamma_n=1$ in~\cite[Algorithm~4.4]{MelenkWohlmuth_hpFEMaposteriori}.

\subsection{The $\h$- and $\h\p$-adaptive algorithm} \label{subsection:hp_hyper}
In this section, we present several numerical experiments on $\h$- and $\h\p$-adaptivity employing the equilibrated error estimator in~\eqref{global_EE}
and Melenk-Wohlmuth's refining strategy~\cite[Section~4]{MelenkWohlmuth_hpFEMaposteriori}.
We consider the two different test cases in~\eqref{u1}--\eqref{u2}, and compare the performance of the $\h$-adaptive algorithm with~$\p=1$, $2$, and~$3$, and the $\h\p$-version of the method.

In Figures~\ref{figure:hpVSh:testcase1}--\ref{figure:hpVSh:testcase2},
we depict the performance of the $\h$- (with~$\p=1$, $2$, and~$3$) and $\p$-adaptive algorithms for all the test cases introduced in~\eqref{u1}--\eqref{u2}.
We start with a coarse Cartesian mesh on the left and a coarse mesh made of structured triangles on the right.

We plot error~\eqref{computable:error:mixed} versus the cubic root of the number of degrees of freedom:
we expect exponential convergence of the error in terms of the cubic root of the number of degrees of freedom, when employing an optimal $\h\p$-mesh.
To see this, one has to  combine the techniques in~\cite{SchoetzauWihler_hpMixed, hpVEMcorner}.

\begin{figure}  [h]
\centering
\includegraphics [angle=0, width=0.48\textwidth]{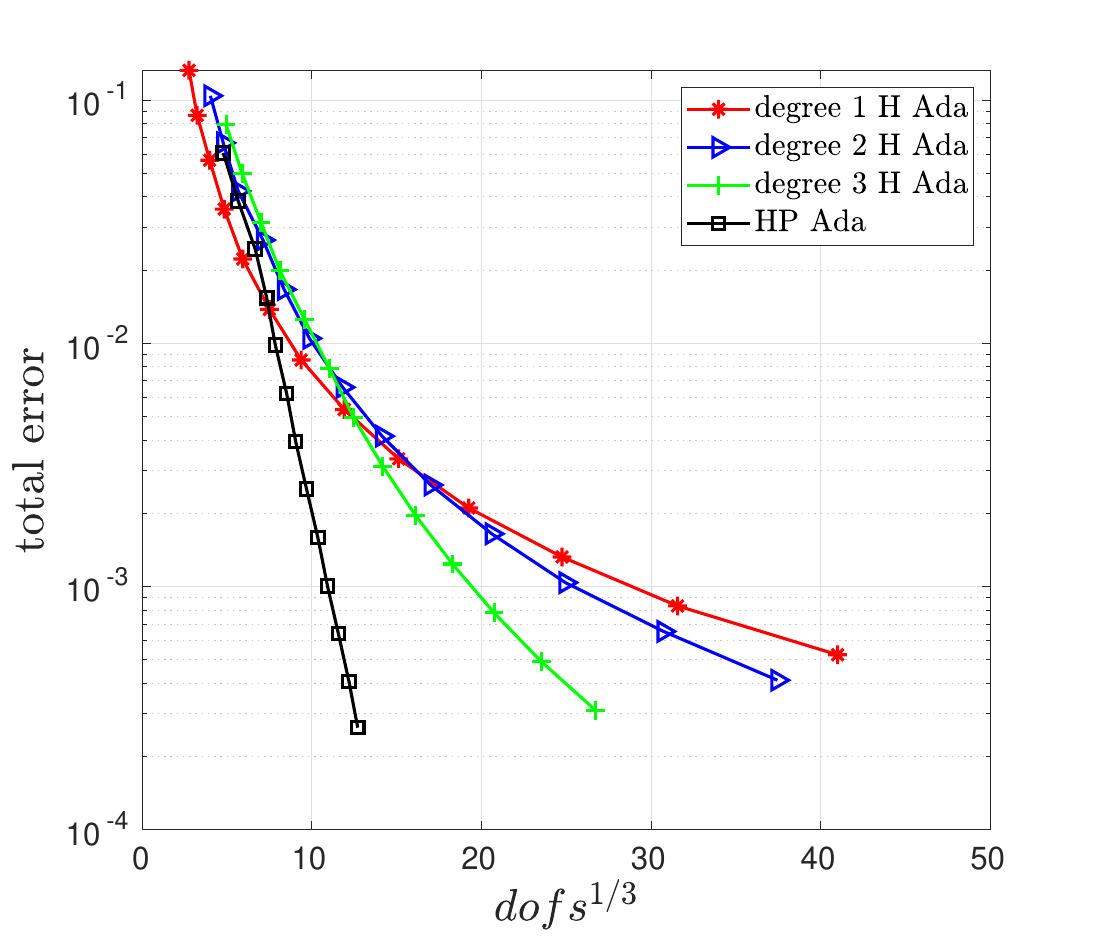}
\includegraphics [angle=0, width=0.48\textwidth]{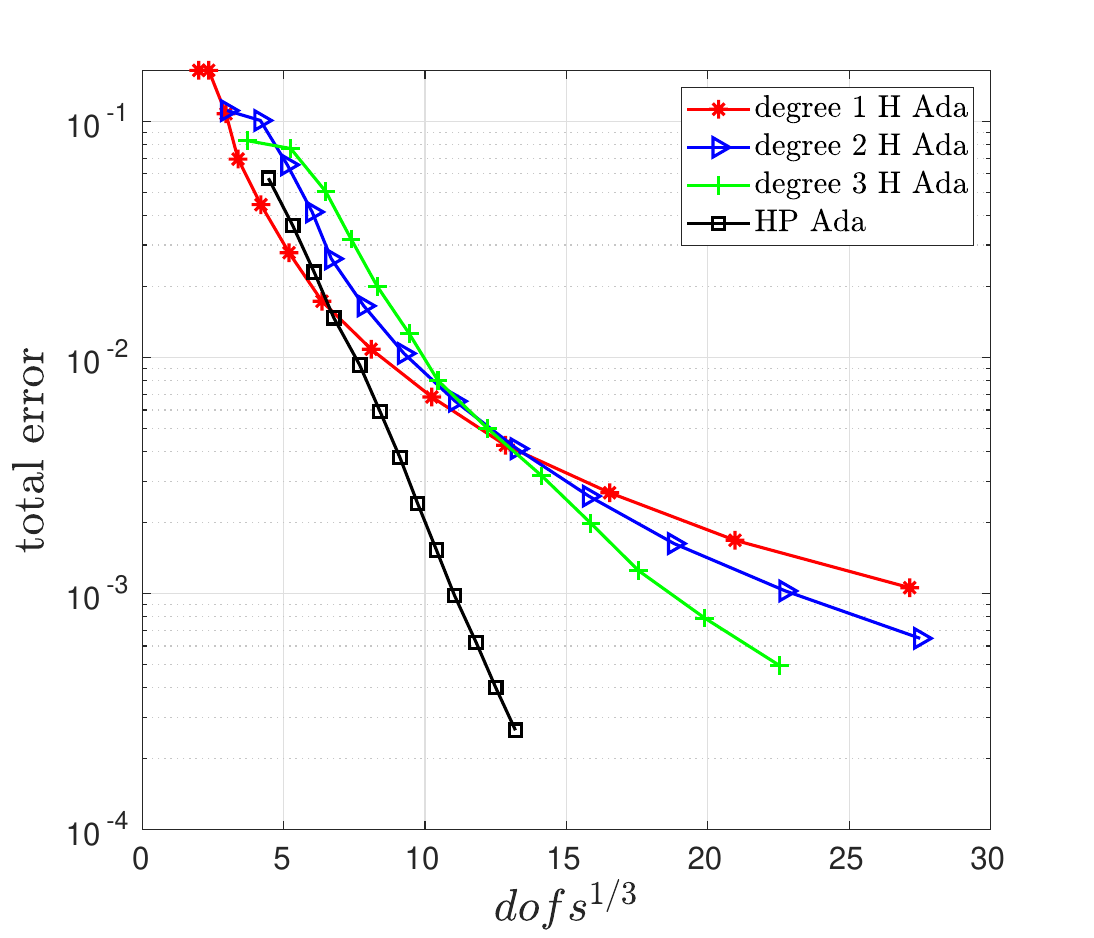}
\caption{$\h$- (with $\p=1$, $2$, and~$3$) versus $\h\p$-adaptive algorithm. The solution is~$u_1$ defined in~\eqref{u1}.
The starting mesh is (\textit{left panel:}) a coarse Cartesian mesh and (\textit{right panel:}) a coarse mesh of structured triangles.}
\label{figure:hpVSh:testcase1}
\end{figure}

\begin{figure}  [h]
\centering
\includegraphics [angle=0, width=0.48\textwidth]{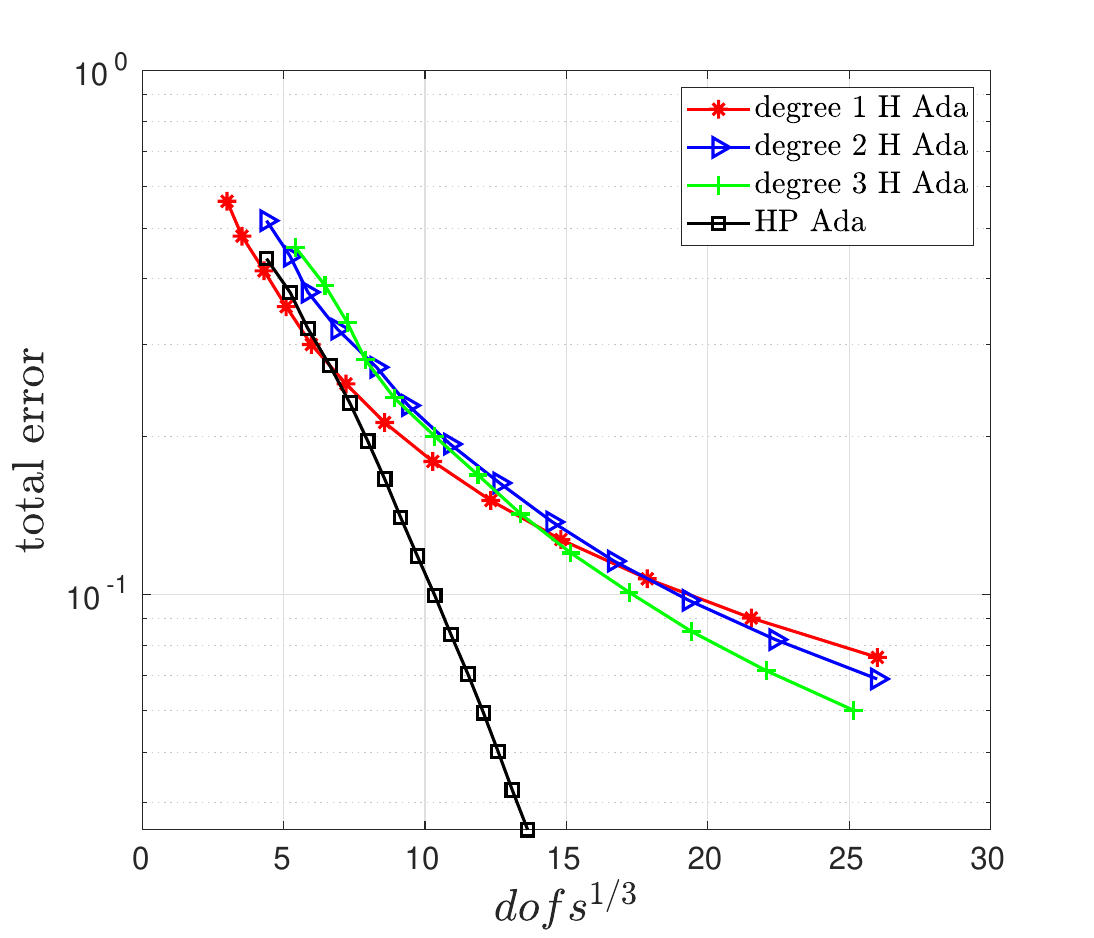}
\includegraphics [angle=0, width=0.48\textwidth]{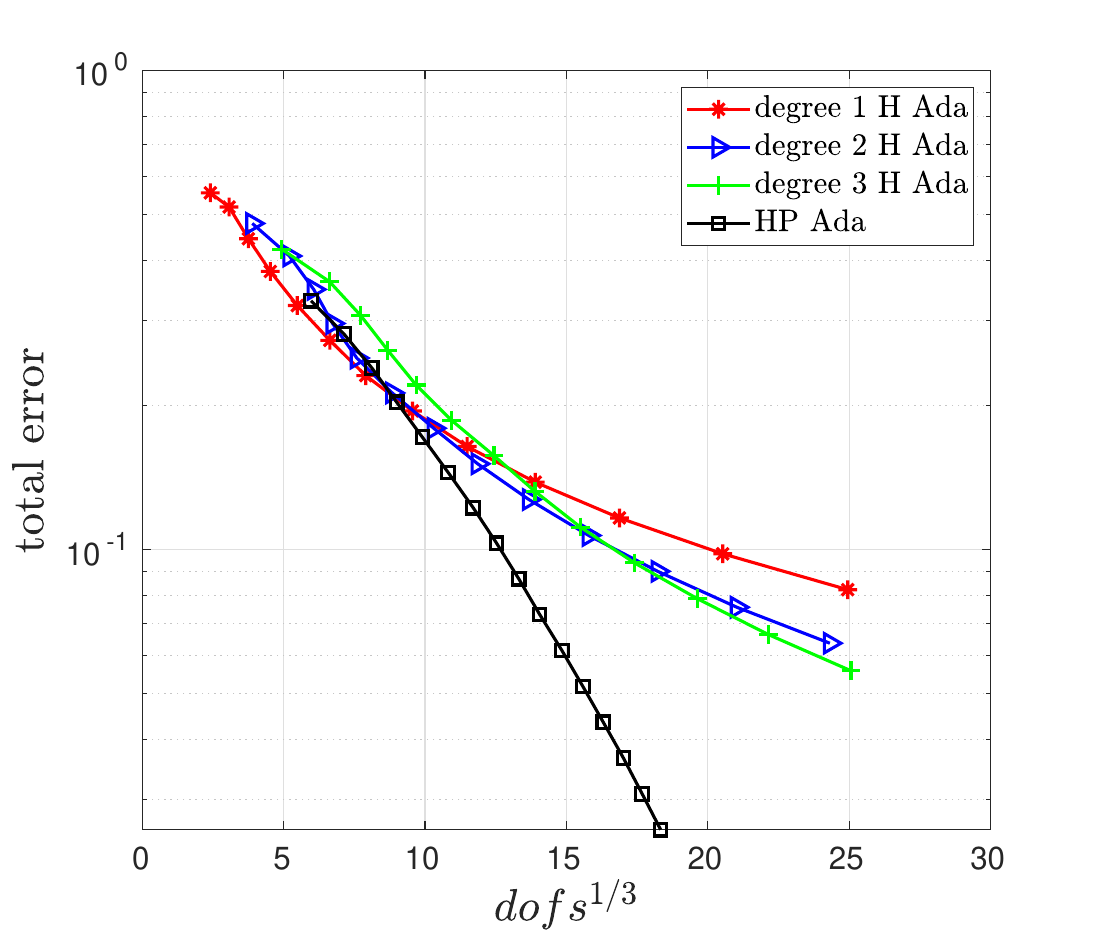}
\caption{$\h$- (with $\p=1$, $2$, and~$3$) versus $\h\p$-adaptive algorithm. The solution is~$u_2$ defined in~\eqref{u2}.
The starting mesh is (\textit{left panel:}) a coarse Cartesian mesh and (\textit{right panel:}) a coarse mesh of structured triangles.}
\label{figure:hpVSh:testcase2}
\end{figure}

From Figures~\ref{figure:hpVSh:testcase1}--\ref{figure:hpVSh:testcase2}, we observe the exponential decay of the error in terms of the cubic root of the number of degrees of freedom
for $\h\p$-adaptive mesh refinements, and algebraic convergence for $\h$-adaptive refinements.
The $\h\p$-adaptive version leads to smaller errors with fewer degrees of freedom compared to the $\h$-adaptive version.

Finally, we exhibit the $\h\p$-meshes after~$5$ and~$14$ refinements of the adaptive algorithm for the test case 1; see Figure~\ref{figure:hp-meshes}.
\begin{figure}  [h]
\centering
\includegraphics [angle=0, width=0.2\textwidth]{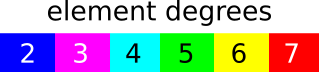}\\
\medskip\medskip
\includegraphics [angle=0, width=0.4\textwidth]{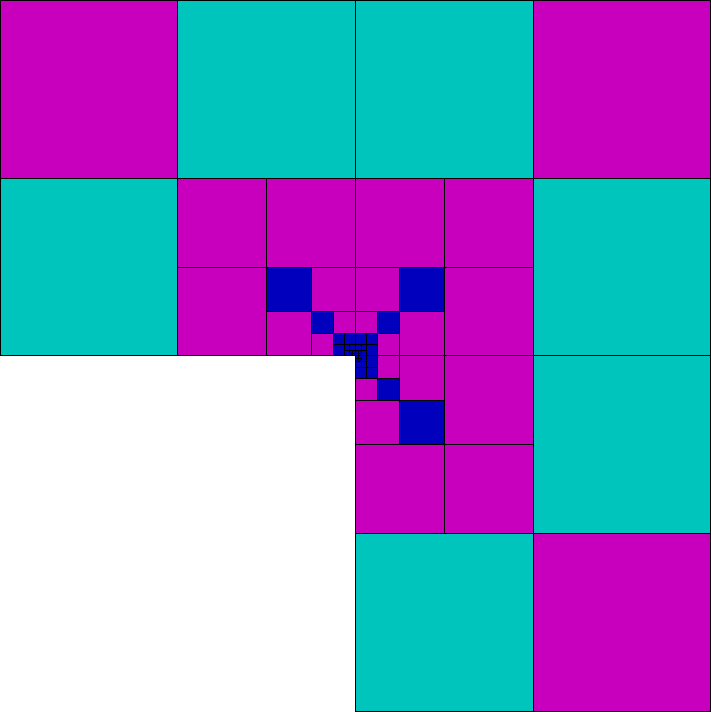}
\quad\quad\quad
\includegraphics [angle=0, width=0.4\textwidth]{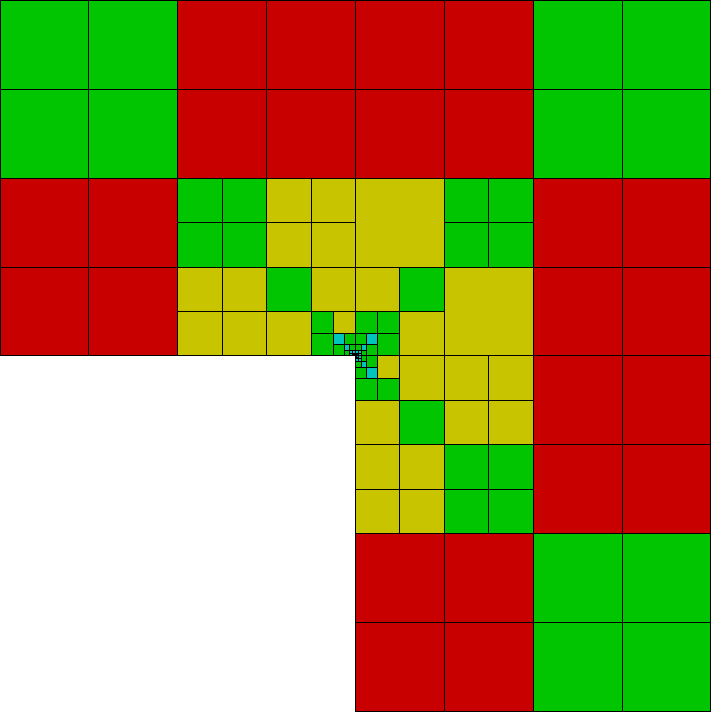}
\caption{$\h\p$-adaptive algorithm. The solution is~$u_1$ defined in~\eqref{u1}.
We exhibit the $\h\p$-meshes after~$5$ (\emph{left panel:}) and~$14$ (\emph{right panel:}) refinements of the adaptive algorithm.}
\label{figure:hp-meshes}
\end{figure}

\section{Local flux reconstruction in VEM: a first investigation} \label{section:localization-mixed}
In Section~\ref{section:apos}, we proved lower and upper bounds of the exact error in terms of an equilibrated error estimator where the dependence on the distribution of the degrees of accuracy is isolated within the stability constants.
This is a major improvement compared to the results achieved in the residual error estimator setting; see~\cite[Theorem~1]{hpVEMapos}.
Nonetheless, the linear system associated with the mixed method~\eqref{VEM:mixed} has approximately three times the number of unknowns of the linear system associated with the primal formulation~\eqref{VEM:primal}.
This downside can be overcome via the localization of the mixed VEM.
This has been already investigated in several works within the continuous and discontinuous finite element framework; see~\cite{braess2009equilibrated} and~\cite{ern2015polynomial}, respectively, and the references therein.

In words, the localization technique works as follows.
We construct an error estimator, which can be computed with the degrees of freedom of the solution to primal VEM~\eqref{VEM:primal} \emph{and} those of a cheap to compute numerical flux.
Whilst in Section~\ref{section:apos}, this function was given by the solution to a local mixed VEM~\eqref{VEM:mixed},
here, it is provided by the combination of solutions to local mixed VEM, which are cheap to solve and can be parallelized.

The aim of this section is to provide an initial study towards the local flux reconstruction in VEM.
We shall be able to prove that the error estimator is reliable and satisfies a condition on equilibration of fluxes.
However, we shall not prove the efficiency. This is also reflected in the numerical results of Section~\ref{section:nr:local}.

\paragraph*{Notation and assumptions.}
To simplify the forthcoming analysis, we henceforth assume
\[
\k=1, \quad \GammaD = \partial \Omega,\quad \GammaN = \emptyset.
\]
We define the polygonal patch around a vertex~$\nu \in \Nun$ and the elements of the mesh~$\taun$ belonging to such patch as
\[
\omeganu := \bigcup \left \{  \E \in \taun \mid \nu \in \NuE    \right\}, \quad \quad \taunomeganu = \{ \E \in \taun \mid \E \subset \omeganu   \}.
\]
Define the two following element spaces over the patch~$\omeganu$:
\[
\begin{split}
& \Vn(\omeganu) :=   \mathcal S^{\p-1, -1} (\omeganu, \taunomeganu) , \\
& \Sigmaboldn(\omeganu) :=  \{  \tauboldn \in H(\div, \omeganu ) \mid \tauboldn{}_{|\E} \in \Sigmaboldn(\E) \quad \forall \E \in \taunomeganu,  \; \llbracket \tauboldn \rrbracket _{\e} =0\quad \forall \e \in  \EnI  \}.\\
\end{split}
\]
We also define
\[
\Sigmaboldnz (\omeganu) := \left\{ \tauboldn  \in \Sigmaboldn (\omeganu)  \mid \n \cdot \tauboldn = 0 \text{ on } \partial \omeganu   \right\}, 
\quad \quad
\Vnstar(\omeganu) :=  \left\{ \vn \in \Vn(\omeganu) \middle| \int_{\omeganu} \vn = 0  \right\}.
\]
For each patch~$\omeganu$, we introduce localized discrete bilinear form
\[
\anomeganu (\sigmaboldnnu, \tauboldnnu) := \sum_{\E \in \taunomeganu} \anE(\sigmaboldnnu, \tauboldnnu) \quad \quad \forall \sigmaboldnnu,\, \tauboldnnu \in \Sigmaboldn(\omeganu).
\]

\paragraph*{Construction of a virtual element partition of unity.}
We construct a virtual element partition of unity~$\{\phitildenu\}_{\nu \in \Nun}$ associated with the mesh~$\taun$ by suitably fixing the degrees of freedom of each~$\phitildenu$.
To each vertex~$\nu \in \Vcaln$, we associate a function~$\phitildenu \in \Vtilden$ defined through its degrees of freedom as follows.
It is equal to~$1$ at~$\nu$, annihilates at all the other vertices, and is affine on the skeleton of the mesh.

For all~$\E \in \taun$ with~$\nu \in \mathcal V^\E$, we proceed as follows.
Let~$\NE$ be the number of vertices of~$\E$. Then, we set
\[
\frac{1}{\vert \E \vert} \int_\E \phitildenu \malphaE = \frac{1}{\NE\vert \E \vert} \int_\E \malphaE \quad\quad \forall \vert \boldalpha \vert =0, \dots, \p-2.
\]
On all the elements~$\E \in \taun$ such that~$\nu$ is not a vertex of~$\E$, $\phitildenu$ is extended by~$0$.

Indeed, $\{\phitildenu\}_{\nu \in \Vcaln}$ is a partition of unity. To see this, define~$\phitilde = \sum_{\nu \in \Vcaln} \phitildenu$. The restriction of~$\phitilde$ on the skeleton of the mesh is equal to~$1$.
Besides, the moments against piecewise polynomials up to degree~$\p-2$ are equal to the moments of the constant function~$1$.
The unisolvence of the degrees of freedom of the primal virtual element space entails the assertion; see Section~\ref{section:VEM:primal}.

Such a construction differs from the others in the VEM literature; see, e.g., \cite{Helmholtz-VEM, XVEM_2019}.
In these references, the partition of unity functions are defined via a harmonic lifting in each element, which implies that
the internal moments are not known, although they are later necessary for the computation of the various polynomial projectors.

\paragraph*{Local flux reconstruction for interior patches.}
We are in the position of defining the local VEM in mixed form.
For all~$\nu \in \NunI$, set
\begin{equation} \label{cE}
\cE = \frac{1}{\vert \E \vert} \StildeE( (I-\Pinablap) \phitildenu, (I-\Pinablap) \utilden),
\end{equation}
and consider the problem
\begin{equation} \label{local:mixed}
\begin{cases}
\text{find } (\sigmaboldnnu, \rnnu) \in \Sigmaboldnz(\omeganu) \times \Vnstar(\omeganu) \text{ such that, for all } \tauboldnnu \in \Sigmaboldnz (\omeganu) \text{ and } \qnnu \in \Vnstar (\omeganu), \\
\anomeganu (\sigmaboldnnu, \tauboldnnu) - (\div (\tauboldnnu), \rnnu)_{0,\omeganu} = - \sum_{\E \in \taunomeganu}  (\Pitildez \phitildenu \, \nabla \Pinablap \utilden, \Piboldzp \tauboldnnu)_{0, \E} \\
- (\div \sigmaboldnnu, \qnnu)_{0,\omeganu} = - \sum_{\E \in \taunomeganu} (\Pitildez \phitildenu \, \f - \nabla \Pinablap \phitildenu \cdot \nabla \Pinablap\utilden - \cE, \qnnu)_{0, \E} . \\
\end{cases}
\end{equation}

\begin{remark}
The second equation in~\eqref{local:mixed} represents the condition on equilibration of fluxes. This will become apparent in the sense of Lemma~\ref{lemma:p-version:div-property} below.
On the other hand, the first equation in~\eqref{local:mixed} is the residual equation. Roughly speaking, it is related to the fact that
\[
\nabla \rnnu \approx \nabla \utilden + \sigmaboldnnu,
\]
its weak formulation, and the VEM setting.
The right-hand side of the first equation in~\eqref{local:mixed} does not play a role in the proof of the reliability. However, it mimics the approach of, e.g., \cite{ern2015polynomial},
where it plays an important role when proving the efficiency.
\eremk
\end{remark}

Thanks to definition~\eqref{cE}, the second equation in~\eqref{local:mixed} is valid also for functions without zero average.
To see this, use that~$\phitildenu$ is equal to~$0$ outside the patch~$\omeganu$, and pick $\qnnu = 1$ in the second equation of~\eqref{local:mixed}, to get
\begin{equation} \label{local:compatibility}
\begin{split}
0 								& = \int_{\partial \omeganu} \n \cdot \sigmaboldnnu = \int_{\omeganu} 	\div \sigmaboldnnu	 \\
								& = \sum_{\E \in \taunomeganu}  \left(\int_{\E} \left[ \Pitildez \phitildenu\, \f - \nabla \Pinablap \phitildenu \cdot \nabla \Pinablap \utilden \right] - \frac{1}{\vert \E \vert} \StildeE( (I-\Pinablap) \phitildenu, (I-\Pinablap) \utilden) (1,1)_{0, \E} \right)\\
								& = (\Pitildez \phitildenu, \f)_{0,\omeganu} - \sum_{\E \in \taunomeganu} \left( ( \nabla \Pinablap \phitildenu, \nabla \Pinablap \utilden)_{0,\E} + \StildeE( (I-\Pinablap) \phitildenu, (I-\Pinablap) \utilden) \right)\\
								& = (\Pitildez \phitildenu, \f)_{0, \Omega} - \sum_{\E \in \taun} \left( (\nabla \Pinablap \phitildenu, \nabla \Pinablap \utilden)_{0,\Omega} {+} \StildeE( (I-\Pinablap) \phitildenu, (I-\Pinablap) \utilden) \right) \overset{\eqref{VEM:primal}}{=} 0. \\
\end{split}
\end{equation}
For all~$\nu \in \Nun$, problem~\eqref{local:mixed} is well-posed.
To see this, it suffices to use the compatibility condition~\eqref{local:compatibility} together with Theorem~\ref{theorem:inf-sup}, Remark~\ref{remark:inf_sup:BFM}, and the Babu\v ska-Brezzi theory.
The inf-sup condition can be proved as in Theorem~\ref{theorem:inf-sup}.

\paragraph*{Local flux reconstruction for boundary patches.}
We define the local mixed VEMs for boundary vertices in a slightly different fashion.
Given~$\nu \in \NunB$, set
\[
\Sigmaboldnz^{\GammaD}(\omeganu) := \{ \tauboldn  \in \Sigmaboldn (\omeganu)  \mid \n_{|\e} \cdot \tauboldn = 0 \text{ for all } \e \in \mathcal E^{\omeganu}_n,\, \e \not \subset \GammaD    \}.
\]
We consider local VEMs of the following form:
\begin{equation} \label{local-problem:DN}
\begin{cases}
\text{find } (\sigmaboldnnu, \rnnu) \in \Sigmaboldnz^{\GammaD}(\omeganu) \times \Vn(\omeganu) \text{ such that, for all } \tauboldnnu \in \Sigmaboldnz^{\GammaD} (\omeganu) \text{ and } \qnnu \in \Vn(\omeganu), \\
\anomeganu (\sigmaboldnnu, \tauboldnnu) - (\div (\tauboldnnu), \rnnu)_{0,\omeganu} = - \sum_{\E \in \taunomeganu} (\Pitildez \phitildenu \, \nabla \Pinablap \utilden, \Piboldzp \tauboldnnu)_{0,\E} \\
- (\div \sigmaboldnnu, \qnnu)_{0,\omeganu} = - \sum_{\E \in \taunomeganu}  (\Pitildez \phitildenu \, \f - \nabla \Pinablap \phitildenu \cdot \nabla \Pinablap \utilden - \cE, \qnnu)_{0,\E}. \\
\end{cases}
\end{equation}
To prove the well-posedness of problem~\eqref{local-problem:DN}, it suffices to use arguments similar to those employed in the proof of Theorem~\ref{theorem:inf-sup}.

Next, define
\begin{equation} \label{global:sigma}
\sigmaboldn =: \sum_{\nu \in \Vcaln}  \sigmaboldnnu,
\end{equation}
and note that~$\sigmaboldn$ belongs to~$\Sigmaboldn$ by construction.

\begin{lem} \label{lemma:p-version:div-property}
Let~$\sigmaboldn$ be defined as in~\eqref{global:sigma}. For all~$\E \in \taun$, for all~$\vn$ in~$\Vn(\E)$, i.e., for all~$\vn$ in~$\mathbb P_{\p-1}(\E)$, the following identity is valid:
\begin{equation} \label{higher-moments:divergence}
\int_\E \div \sigmaboldn \vn= \int_\E \f \vn .
\end{equation}
\end{lem}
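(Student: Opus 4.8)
The plan is to sum the local flux‐reconstruction equations over all vertices of the element and show that the sum telescopes, using that the partition of unity functions $\{\phitildenu\}$ sum to $1$ (in the precise degrees-of-freedom sense established above) and that the primal VEM equation~\eqref{VEM:primal} holds. Fix $\E \in \taun$ and $\vn \in \mathbb P_{\p-1}(\E) = \Vn(\E)$. For each vertex $\nu \in \NuE$, the second equation in the local problem~\eqref{local:mixed} (or~\eqref{local-problem:DN} for boundary vertices) may be tested with a function that equals $\vn$ on $\E$ and $0$ on the other elements of the patch — note that for interior patches one has to subtract the mean over $\omeganu$, but the computation in~\eqref{local:compatibility} shows the right-hand side already has the correct average, so testing with such a (mean-adjusted) function reproduces the $\E$-localized identity anyway. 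This yields, for every $\nu \in \NuE$,
\[
\int_\E \div(\sigmaboldnnu)\, \vn = \int_\E \big( \Pitildez \phitildenu\, \f - \nabla \Pinablap \phitildenu \cdot \nabla \Pinablap \utilden - \cE \big)\, \vn,
\]
with $\cE = \tfrac{1}{\vert \E\vert}\StildeE((I-\Pinablap)\phitildenu, (I-\Pinablap)\utilden)$ from~\eqref{cE}; since $\vn$ is a polynomial of degree $\p-1$, the constant $\cE$ contributes $\cE \int_\E \vn$.

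**Next I would sum over $\nu \in \NuE$** (all vertices of $\E$, which are exactly the vertices $\nu$ whose patch contains $\E$, so $\sigmaboldn{}_{|\E} = \sum_{\nu \in \NuE}\sigmaboldnnu{}_{|\E}$ by~\eqref{global:sigma}). On the left we get $\int_\E \div(\sigmaboldn)\,\vn$. On the right, I collect the three contributions. The first is $\int_\E \big(\sum_{\nu \in \NuE}\Pitildez\phitildenu\big)\f\,\vn$; since $\sum_{\nu}\phitildenu = 1$ on the skeleton and matches the moments of $1$ up to degree $\p-2$, and $\Pitildez$ is the $L^2$-projection onto $\mathbb P_{\p-2}(\E) \supseteq \mathbb P_0$, one has $\sum_{\nu\in\NuE}\Pitildez\phitildenu = \Pitildez 1 = 1$, so this term equals $\int_\E \f\,\vn$ — but wait, this is only true after also accounting for $\vn$, so more precisely one uses that $\int_\E (\sum_\nu \Pitildez\phitildenu)\f\,\vn = \int_\E \Pitildez(\sum_\nu\phitildenu)\f\,\vn$; actually the cleanest route is to note $\sum_\nu \phitildenu$ has the same degrees of freedom as $1$, hence is literally the function $1$ by unisolvency, so every projector applied to it gives the corresponding projection of $1$. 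The second and third contributions, $-\sum_{\nu\in\NuE}\int_\E (\nabla\Pinablap\phitildenu \cdot \nabla\Pinablap\utilden + \cE)\vn$, I claim vanish: expanding $\cE$ and using linearity of $\Pinablap$ and $\StildeE$ in the first slot, $\sum_{\nu\in\NuE}\big[(\nabla\Pinablap\phitildenu, \nabla\Pinablap\utilden)_{0,\E}\cdot(\text{against }\vn) + \tfrac{1}{\vert\E\vert}\StildeE((I-\Pinablap)\phitildenu,(I-\Pinablap)\utilden)\int_\E\vn\big]$ collapses — because $\sum_\nu\Pinablap\phitildenu = \Pinablap 1 = 1$ so $\nabla\Pinablap(\sum_\nu\phitildenu) = 0$ and $(I-\Pinablap)(\sum_\nu\phitildenu) = 0$, killing both the gradient term and the stabilization term elementwise. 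Hence the right-hand side reduces to $\int_\E \f\,\vn$, which is~\eqref{higher-moments:divergence}.

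**The subtle point** — and the step I expect to need the most care — is the bookkeeping for interior patches, where the local problem lives on $\Vnstar(\omeganu)$ (zero-mean functions) rather than all of $\Vn(\omeganu)$. One cannot directly plug in the indicator-of-$\E$ times $\vn$ as a test function because it need not have zero average over $\omeganu$. The resolution is exactly the computation~\eqref{local:compatibility}: it shows the right-hand side functional of the second equation in~\eqref{local:mixed}, evaluated against the constant $1$, vanishes, and the left-hand side $-(\div\sigmaboldnnu, 1)_{0,\omeganu}$ also vanishes by the boundary condition $\n\cdot\sigmaboldnnu = 0$ on $\partial\omeganu$; therefore the second equation, though stated for zero-mean test functions, in fact holds for all $\qnnu \in \Vn(\omeganu)$, and in particular for the one supported on $\E$ equal to $\vn$ there. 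For boundary patches~\eqref{local-problem:DN} the test space is already all of $\Vn(\omeganu)$, so no such adjustment is needed. Once this extension is in hand, the telescoping argument above goes through verbatim, and summing the elementwise identities over $\nu \in \NuE$ delivers the claim. I would also remark that the identity uses only that $\vn$ is a polynomial of degree $\le \p-1$ on $\E$, which is precisely the characterization $\Vn(\E) = \mathbb P_{\p-1}(\E)$ recalled in the statement.
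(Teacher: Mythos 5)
Your proposal is correct and follows essentially the same route as the paper's proof: test the second equation of the local problems with the function equal to $\vn$ on $\E$ and zero elsewhere (justified for interior patches by the compatibility computation~\eqref{local:compatibility}, which extends the equation to non-zero-mean test functions), sum over $\nu\in\NuE$, and use that $\sum_{\nu}\phitildenu$ has the degrees of freedom of the constant $1$, so that $\Pitildez$, $\nabla\Pinablap$, and $I-\Pinablap$ applied to it give $1$, $0$, and $0$, collapsing the gradient and stabilization terms. No gaps.
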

\begin{proof}
Fix~$\E \in \taun$.
We have that
\[
\sigmaboldn{}_{|\E} = \sum_{\nu \in \NuE} \sigmaboldnnu.
\]
Let~$\nu \in \NuE$.
Recall that~\eqref{local:compatibility} entails that the second equation in~\eqref{local:mixed} is valid for test functions without zero average.
Pick~$\qnnu$ equal to a polynomial~$\vn$ in~$\E$ and zero elsewhere in the second equation of~\eqref{local:mixed}.
Pick the same function in the second equation of~\eqref{local-problem:DN} and deduce
\[
\begin{split}
\int_\E \div(\sigmaboldn) \vn 	& = \sum_{\nu \in \mathcal V^\E} \int_\E \div(\sigmaboldnnu) \vn \\
						& = \sum_{\nu \in \mathcal V^\E} \Bigg\{ (\Pitildez \phitildenu, \f \vn)_{0,\E} - ( \nabla \Pinablap \phitildenu, \nabla \Pinablap \un \vn)_{0,\E}  \\
						& \quad \quad \quad \quad-\frac{1}{\vert \E \vert} ( \StildeE( (I-\Pinablap)\phitildenu,  (I-\Pinablap) \utilden ) , \vn)_{0,\E}   \Bigg \}.
\end{split}
\]
Using that
\[
\Pitildez \left(\sum_{\nu \in \mathcal V ^\E} \phitildenu  \right) = 1, \quad \quad  \nabla \Pinablap\left(\sum _{\nu \in \mathcal V^\E} \phitildenu \right) =0, \quad \quad  (I-\Pinablap) \left(\sum _{\nu \in \mathcal V^\E} \phitildenu \right) = 0,
\]
we get
\[
\begin{split}
\int_\E \div(\sigmaboldn) \vn 	& = \int_\E \f \vn - \sum_{\nu \in \mathcal V^\E} \left\{ \frac{1}{\vert \E \vert} \left(  ( \StildeE( (I-\Pinablap)\phitildenu,  (I-\Pinablap) \utilden ) , \vn)_{0,\E}  \right)    \right\}\\
									& = \int_\E \f \vn - \frac{(1,\vn)_{0,\E}}{\vert \E \vert} \sum_{\nu \in \mathcal V^\E}  \StildeE( (I-\Pinablap)\phitildenu,  (I-\Pinablap) \utilden ) = \int_\E \f \vn, \\
\end{split}
\]
which is the assertion.
\end{proof}

\subsection{A new error estimator and its reliability} \label{subsection:reliability-local}
In this section, we show a result proving the reliability of an error estimator computed by means of the function~$\sigmaboldn$~\eqref{global:sigma} as well as of the solution to the primal discrete formulation~\eqref{VEM:primal}.
This is the virtual element counterpart of the classical counterpart by Prager and Synge~\cite{prager1947approximations}.

For all~$\E \in \taun$, introduce the local flux reconstruction error estimators
\[
\begin{split}
\etalocE ^2 	& = \SE((\Ibold-\Piboldzp)\sigmaboldn , (\Ibold - \Piboldzp) \sigmaboldn) + \StildeE( (I -\Pinablap)\utilden , (I -\Pinablap)\utilden  ) \\
				& \quad + \Vert \Piboldzp \sigmaboldn + \nabla \Pinablap \utilden \Vert_{0,\Omega}^2.
\end{split}
\]
We define the global local flux reconstruction error estimator~$\etaloc$ as
\begin{equation} \label{global:localized:error-estimator}
\etaloc^2 = \sum_{\E \in \taun} \etalocE^2.
\end{equation}

\begin{thm} \label{theorem:h-reliability}
Let the assumptions (\textbf{G1}), (\textbf{G2}), (\textbf{K}), and (\textbf{D}) be valid.
Let~$\utilde$ and~$\utilden$ be the solutions to~\eqref{primal:formulation} and~\eqref{VEM:primal}, respectively, and~$\sigmaboldn$ be defined as in~\eqref{global:sigma}.
The following upper bound on the error of the primal formulation is valid:
\begin{equation} \label{reliability:bound-h}
\begin{split}
\vert \utilde - \utilden \vert_{1,\Omega}^2 	& \lesssim  \left[ \max_{\E \in \taun} (\max(\alpha_*^{-1},\alphatilde_*^{-1})) \right] \etaloc^2 + \sum_{\E \in \taun} \left(  \frac{\hE^2}{\p^2} \Vert  \f - \div \sigmaboldn \Vert_{0,\E}^2 \right).  \\
\end{split}
\end{equation}
The hidden constant is independent of~$\p$.
\end{thm}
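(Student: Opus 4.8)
The strategy is to estimate $\vert \utilde - \utilden \vert_{1,\Omega}^2 = a(\utilde - \utilden, \utilde - \utilden)$ by testing the Galerkin-orthogonality-type relation against $\utilde - \utilden$ and inserting $\sigmaboldn$ as an equilibrated flux. First I would write
\[
\vert \utilde - \utilden \vert_{1,\Omega}^2 = \int_\Omega \nabla(\utilde - \utilden) \cdot \nabla(\utilde - \utilden) = \int_\Omega (\f - \div(\sigmaboldn))(\utilde - \utilden) - \int_\Omega \nabla(\utilde - \utilden)\cdot(\sigmaboldn + \nabla \utilden) + (\text{correction terms}),
\]
obtained by integrating by parts the term $\int_\Omega \div(\sigmaboldn)(\utilde-\utilden)$ after adding and subtracting, and using that $-\div(\nabla\utilde) = \f$ weakly. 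The key fact supplied by Lemma~\ref{lemma:p-version:div-property} is that $\div(\sigmaboldn)$ agrees with $\f$ against all piecewise polynomials of degree $\p-1$; hence the first term $\int_\Omega(\f - \div(\sigmaboldn))(\utilde - \utilden)$ reduces, after subtracting the piecewise $L^2$-projection of $\utilde - \utilden$, to $\sum_{\E}\int_\E(\f - \div(\sigmaboldn))((\utilde-\utilden) - \Pizpmo(\utilde-\utilden))$, which by standard $\h\p$-approximation estimates and Cauchy--Schwarz is bounded by $\big(\sum_\E \tfrac{\hE^2}{\p^2}\Vert \f - \div(\sigmaboldn)\Vert_{0,\E}^2\big)^{1/2}\vert\utilde-\utilden\vert_{1,\Omega}$, which is absorbed via Young's inequality into the left-hand side and the oscillation term of~\eqref{reliability:bound-h}.

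Next I would handle $\int_\Omega \nabla(\utilde - \utilden)\cdot(\sigmaboldn + \nabla \utilden)$. The quantity $\sigmaboldn + \nabla \utilden$ is not directly computable, so I would replace $\sigmaboldn$ by $\Piboldzp \sigmaboldn$ and $\nabla\utilden$ by $\nabla\Pinablap\utilden$ at the cost of the stabilization-type remainders; concretely, bound
\[
\Vert \sigmaboldn + \nabla \utilden \Vert_{0,\Omega} \le \Vert \Piboldzp\sigmaboldn + \nabla\Pinablap\utilden\Vert_{0,\Omega} + \Vert (\Ibold - \Piboldzp)\sigmaboldn\Vert_{0,\Omega} + \Vert \nabla(\Ibold - \Pinablap)\utilden\Vert_{0,\Omega},
\]
then use the lower bounds in the stabilization assumptions~\eqref{local_stab:primal},~\eqref{local_stab:mixed} to dominate the last two terms by $\max(\alpha_*^{-1},\alphatilde_*^{-1})^{1/2}$ times $\big(\SE((\Ibold-\Piboldzp)\sigmaboldn,\cdot) + \StildeE((I-\Pinablap)\utilden,\cdot)\big)^{1/2}$, i.e. by the ingredients of $\etalocE$. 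Cauchy--Schwarz against $\vert\utilde-\utilden\vert_{1,\Omega}$ and Young again yield absorption, leaving $\etaloc^2$ with the stated constant on the right.

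The delicate point — and the reason the correction terms appeared above — is that the "$\nabla \rnnu \approx \nabla\utilden + \sigmaboldnnu$" heuristic is only approximate in the VEM setting: the flux $\sigmaboldn$ built from the local problems~\eqref{local:mixed},~\eqref{local-problem:DN} satisfies the equilibration identity exactly (Lemma~\ref{lemma:p-version:div-property}), but the first equations of those local problems involve the projected quantities $\Pitildez\phitildenu$, $\Pinablap\utilden$, $\Piboldzp\tauboldnnu$ rather than the exact ones, and the constants $\cE$ from~\eqref{cE} encode the stabilization of the primal solve. I expect the main obstacle to be bookkeeping these projection and stabilization discrepancies: one must check that summing the first local equations over $\nu$, with the partition-of-unity identities $\Pitildez(\sum_\nu\phitildenu)=1$, $\nabla\Pinablap(\sum_\nu\phitildenu)=0$, $(I-\Pinablap)(\sum_\nu\phitildenu)=0$ used in the proof of Lemma~\ref{lemma:p-version:div-property}, produces exactly the computable residual $\int_\Omega\nabla(\utilde-\utilden)\cdot(\Piboldzp\sigmaboldn + \nabla\Pinablap\utilden)$ plus controllable stabilization remainders, and no leftover terms that scale badly in $\p$. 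Once that cancellation is verified, reliability follows; note that $\p$-robustness of the constant hinges on the $\h\p$-approximation estimate constant $\cB(\E)$ and on the stabilization \emph{lower} bounds only, both of which are $\p$-independent as needed.
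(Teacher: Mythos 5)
Your proposal is correct and follows essentially the same route as the paper: the paper writes $\vert \utilde-\utilden\vert_{1,\Omega}$ as a supremum over $v\in H^1_0(\Omega)$ of $(\f-\div(\sigmaboldn),v)_{0,\Omega}-(\sigmaboldn+\nabla\utilden,\nabla v)_{0,\Omega}$, bounds the first term via Lemma~\ref{lemma:p-version:div-property} and $\h\p$-approximation, and the second via the triangle inequality through $\Piboldzp\sigmaboldn+\nabla\Pinablap\utilden$ and the stabilization lower bounds — exactly your plan, with your choice $v=\utilde-\utilden$ being an equivalent specialization. One remark: the "delicate point" in your last paragraph is not actually an issue for reliability. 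The identity
\[
\vert\utilde-\utilden\vert_{1,\Omega}^2=(\f-\div(\sigmaboldn),\utilde-\utilden)_{0,\Omega}-(\sigmaboldn+\nabla\utilden,\nabla(\utilde-\utilden))_{0,\Omega}
\]
is exact with no correction terms, since $(\sigmaboldn,\nabla w)_{0,\Omega}+(\div(\sigmaboldn),w)_{0,\Omega}=0$ for every $w\in H^1_0(\Omega)$ and every $\sigmaboldn\in\Hdiv$; the first equations of the local problems~\eqref{local:mixed} and~\eqref{local-problem:DN} (and hence the projection mismatches you worry about) never enter the reliability argument — they matter only for efficiency, which is precisely where the paper concedes the argument breaks down.
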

\begin{proof}
For all~$v\in H^1_0(\Omega)$, we have
\begin{equation} \label{big:bound:h}
\begin{split}
\vert \utilde - \utilden \vert_{1,\Omega}	& = \sup_{v\in H^1_0(\Omega),\, \vert v \vert_{1,\Omega}=1} (\nabla (\utilde - \utilden), \nabla v)_{0,\Omega} \overset{\eqref{primal:formulation}}{=} \sup_v \{ (\f,v)_{0,\Omega} - (\nabla \utilden, \nabla v)_{0,\Omega} \} \\
								& = \sup_v \{ (\f,v)_{0,\Omega} - (\nabla \utilden, \nabla v)_{0,\Omega} - (\sigmaboldn , \nabla v)_{0,\Omega} - (\div \sigmaboldn, v)_{0,\Omega}     \}\\
								& = \sup_v \{ (\f-\div \sigmaboldn, v)_{0,\Omega}  -  (\sigmaboldn + \nabla \utilden, \nabla v)_{0,\Omega}    \}\\
								& =: \sup_v \vert A - B \vert \le \sup_v \{ \vert A \vert + \vert B \vert   \}.\\
\end{split}
\end{equation}
We prove an upper bound on the two terms on the right-hand side of~\eqref{big:bound:h} separately. We begin with the first one.
Using Lemma~\ref{lemma:p-version:div-property} testing with~$\vn$, the piecewise~$L^2$ projector onto~$\mathbb P_{\p-1}(\E)$,
using $\h\p$-best polynomial approximation properties, and the $\ell^2$ Cauchy-Schwarz inequality, we deduce
\begin{equation} \label{bound:term-A}
\begin{split}
A 	& =  (\f - \div\sigmaboldn, v)_{0,\Omega} \overset{\eqref{higher-moments:divergence}}{=} \sum_{\E \in \taun} (\f - \div\sigmaboldn, v - \vn)_{0,\E} \\
	& \lesssim \left( \sum_{\E \in \taun} \frac{\hE^2}{\p^2} \Vert \f - \div\sigmaboldn \Vert^2_{0,\E}     \right)^{\frac{1}{2}}  \vert v\vert_{1,\Omega}. \\
\end{split}
\end{equation}
As for the upper bound on the second term on the right-hand side of~\eqref{big:bound:h}, we observe that
\[
\begin{split}
\vert B \vert	& = \vert (\sigmaboldn + \nabla \utilden, \nabla v)_{0,\Omega} \vert \\
			& \le \vert (\sigmaboldn - \Piboldzp \sigmaboldn, \nabla v)_{0,\Omega} \vert  +  \vert (\Piboldzp \sigmaboldn + \nabla \Pinablap\utilden, \nabla v)_{0,\Omega} \vert + \vert ( \nabla \utilden - \nabla \Pinablap\utilden, \nabla v)_{0,\Omega} \vert \\
			& \le \left(  \Vert \sigmaboldn - \Piboldzp \sigmaboldn \Vert_{0, \Omega} + \Vert \Piboldzp \sigmaboldn + \nabla \Pinablap \utilden \Vert_{0,\Omega}     + \Vert  \nabla \utilden - \nabla \Pinablap\utilden   \Vert_{0,\Omega}   \right) \vert v \vert_{1,\Omega} .\\
\end{split}
\]
Using the coercivity property of the stabilizations in~\eqref{local_stab:primal} and~\eqref{local_stab:mixed}, we get
\begin{equation} \label{bound:term-B}
\begin{split}
\vert B \vert
& \le \Bigg( \sum_{\E \in \taun} \left[ \alpha_*^{-1} \SE((\Ibold- \Piboldzp) \sigmaboldn, (\Ibold- \Piboldzp) \sigmaboldn) + \alphatilde_*^{-1}  \StildeE ( (I-\Pinablap) \utilden, (I-\Pinablap) \utilden ) \right] ^{\frac{1}{2}}\\
& \quad\quad\quad\quad\quad + \Vert \Piboldzp \sigmaboldn +  \nabla \Pinablap\utilden \Vert_{0,\Omega} \Bigg) \vert v \vert_{1,\Omega} .\\
\end{split}
\end{equation}
The assertion follows by plugging~\eqref{bound:term-A} and~\eqref{bound:term-B} in~\eqref{big:bound:h}.
\end{proof}
Theorem~\ref{theorem:h-reliability} shows the reliability of the local flux reconstruction error estimator.
The second term on the right-hand side of~\eqref{reliability:bound-h} represents the oscillation in the equilibrated flux condition~\eqref{higher-moments:divergence}.

\subsection{Lack of efficiency} \label{subsection:no-efficiency}
In this section, we give hints on why we are not able to prove the efficiency of the error estimator~\eqref{global:localized:error-estimator} and why it is probably not valid in general.

To this aim, we first recall what happens in the finite element setting.
Given the patch~$\omeganu$ consisting of triangles around an internal vertex~$\nu$, let~$\RT(\omeganu)$ be the associated Raviart-Thomas space of order~$\p$.
Moreover, let~$\Qn(\omeganu)$ be the space of piecewise polynomials of degree~$\p$ associated with the subtriangulation of~$\omeganu$.
Observe that the partition of unity elements~$\phitildenu$ are standard linear hat functions.
The FEM counterpart of~\eqref{local:mixed} reads
\begin{equation} \label{local:mixed:FEM}
\begin{cases}
\text{find } (\sigmaboldnnu, \rnnu) \in \RT(\omeganu) \times \Qn(\omeganu) \text{ such that, for all } \tauboldnnu \in \RT(\omeganu) \text{ and } \qnnu \in \Qn(\omeganu), \\
(\sigmaboldnnu, \tauboldnnu)_{0,\omeganu} - (\div (\tauboldnnu), \rnnu)_{0,\omeganu} = - (\phitildenu \, \nabla \utilden, \tauboldnnu)_{0, \omeganu} \\
- (\div \sigmaboldnnu, \qnnu)_{0,\omeganu} = - (\phitildenu \, \f - \nabla \phitildenu \cdot \nabla \utilden , \qnnu)_{0, \omeganu} . \\
\end{cases}
\end{equation}
The first step towards the proof of the local efficiency of the error estimator is based on an~$H^1$ representation of the residual. In particular, introduce~$\rnu \in H^1(\omeganu) / \mathbb R$ the solution to
\begin{equation} \label{H1-residual:FEM}
(\nabla \rnu, \nabla v)_{0,\omeganu} = -(\phitildenu \nabla\utilden, \nabla v) + (\phitildenu f - \nabla \phitildenu \cdot \nabla \utilden, v)_{0,\omeganu} \quad \forall v \in H^1(\omeganu) / \mathbb R.
\end{equation}
This equation is obtained by formally substituting~$v$ and~$\nabla v$ to~$\qnnu$ and~$\tauboldnnu$, using twice an integration by parts, and combining the two equations in~\eqref{local:mixed:FEM}.

As in~\cite{carstensen1999fully, braess2009equilibrated}, the following bound can be proven:
\begin{equation} \label{1st-step-efficiency}
\vert \rnu \vert_{1,\omeganu} \lesssim \vert u - \utilden \vert _{1,\omeganu},
\end{equation}
where the hidden constant depends on a Poincar\'e constant.

A crucial ingredient in the proof of~\eqref{1st-step-efficiency} consists in observing that
\begin{equation} \label{remark:Braess-Schoeberl}
\begin{split}
-(\phitildenu \nabla\utilden, \nabla v) + (\phitildenu f - \nabla \phitildenu \cdot \nabla \utilden, v)_{0,\omeganu} 
& = (\f, \phitildenu v)_{0,\omeganu} - (\nabla \utilden, \nabla (\phitildenu \, v))_{0,\omeganu} \\
& = (\nabla(u-\utilden), \nabla (\phitildenu\, v))_{0,\omeganu}.
\end{split}
\end{equation}
This is the first step towards the proof of the efficiency, as then it suffices to show that the error estimator is smaller than the energy of the $H^1$ residual.
In what follows, we explain how the VEM counterpart of~\eqref{H1-residual:FEM} looks like, and what are the issues in deriving estimates of the type in~\eqref{1st-step-efficiency}.
\medskip 

The main difference in the FE and VE approaches is that in the latter we have to deal with projectors, stabilizations, and functions that are not available in closed-form.
We formally substitute~$v$ and~$\nabla v$ to~$\qnnu$ and~$\tauboldnnu$. Using the first equation in~\eqref{local:mixed}, we can write
\begin{equation} \label{first-step-no:efficiency}
\begin{split}
& (\nabla \rnu ,\nabla v)_{0,\omeganu}\\
& = -\sum_{\E \in \taunomeganu} \left[ (\Pitildez \phitildenu \, \nabla \Pinablap \utilden, \Piboldzp \nabla v)_{0,\E} - \anE(\sigmaboldnnu, \nabla v)  \right]\\
& = -\sum_{\E \in \taunomeganu} \left[ (\Pitildez \phitildenu \, \nabla \Pinablap \utilden, \Piboldzp \nabla v)_{0,\E} \underbrace{- \anE(\sigmaboldnnu, \nabla v) + \aE(\sigmaboldnnu, \nabla v)}_{=:A} - \aE(\sigmaboldnnu, \nabla v) \right].\\
\end{split}
\end{equation}
The term~$A$ is a consistency error, which can be approximated with optimal rate.
Thence, we deal with the last term appearing on the right-hand side of~\eqref{first-step-no:efficiency}.
Using the definition of orthogonal projectors, an integration by parts, and the definition of the jumps, we get
\[
- \aomeganu(\sigmaboldnnu, \nabla v)  = (\div \sigmaboldnnu, v)_{0,\omeganu}.
\]
Inserting this bound into~\eqref{first-step-no:efficiency} and using formally the second equation in~\eqref{local:mixed} yield
\[
\begin{split}
& (\nabla \rnu,\nabla v)_{0,\omeganu}\\
& = A  + \sum_{\E \in \taunomeganu} \left[ -(\Pitildez \phitildenu \, \nabla \Pinablap \utilden, \Piboldzp \nabla v)_{0,\E} + (\Pitildez \phitildenu \, \f - \nabla \Pinablap \phitildenu \cdot \nabla \Pinablap \utilden,v)_{0,\E} \right].\\
& = A \underbrace{-\sum_{\E \in \taun} (\nabla \Pinablap \utilden, \Pitildez \phitildenu\, \Piboldzp \nabla v + \nabla \Pinablap \phitildenu \, v)_{0,\E}}_{=:B}
																+ \sum_{\E \in \taun} (\f, \Pitildez \phitildenu v)_{0,\E}.
\end{split}
\]
Eventually, we analyze the last term on the right-hand side. Using the continuous problem~\eqref{strong:primal} and an integration by parts lead us to
\[
\begin{split}
\sum_{\E \in \taun} (\f, \Pitildez \phitildenu v)_{0,\E} 	& = \sum_{\E \in \taunomeganu} (\nabla \utilde, \nabla (\Pitildez \phitildenu\, v))_{0,\E} - \sum_{\e \in \Enomeganu} (u, \llbracket \Pitildez \phitildenu\, v \rrbracket)_{0,\e}=:C + D.
\end{split}
\]
Combining all the above estimates, we arrive at
\[
(\nabla \rnu,\nabla v)_{0,\omeganu} = A+B+C+D.
\]
In the FE setting the terms~$B$ and~$C$ are combined together, see~\eqref{remark:Braess-Schoeberl}.
Here, the presence of different projectors forbids us to use either the product rule or the continuous formulation as in~\eqref{remark:Braess-Schoeberl},
which renders the simplification resulted by combining the partition of unity impossible.

In Section~\ref{section:nr:local} below, we provide numerical evidence that the efficiency does not take place in the current setting.

\section{Numerical results on the local flux reconstruction} \label{section:nr:local}
In this section, we present some numerical results on the local flux reconstruction presented in Section~\ref{section:localization-mixed}.
The aim is to show that reliability~\eqref{reliability:bound-h} and the equilibrated flux condition~\eqref{higher-moments:divergence} are valid.
Interestingly, we shall observe that the efficiency does not take place for the high-order VEM.

We consider the following test case.
\paragraph*{Test case~$3$.}
Consider the square domain~$\Omega_3=(0,1)^2$ and the exact solution
\begin{equation} \label{u3}
u_3(x,y) = x(1-x)y(1-y).
\end{equation}
The primal formulation of the problem we are interested in is such that we have: zero Dirichlet boundary conditions on the boundary edges;
$\k=1$; a right-hand side computed accordingly with~\eqref{u3}.
\medskip

We want to analyse the behaviour of the following quantities:
\begin{itemize}
\item the error of the method~$\vert u - \un \vert_{1,\Omega}$;
\item the error estimator~$\etaloc$~\eqref{global:localized:error-estimator};
\item the oscillation in the equilibrated flux condition, given by
\begin{equation} \label{oscillation:flux-condition}
\sqrt{\sum_{\E \in \taun} \left(  \frac{\hE^2}{\p^2} \Vert  \f - \div\sigmaboldn \Vert_{0,\E}^2 \right)};
\end{equation}
\item the quantity
\begin{equation} \label{new:quantity}
\sqrt{\sum_{\nu \in \Nun} \Vert  \Piboldzp \sigmaboldnnu + \Pitildez \phitildenu \nabla \Pinablap \utilden \Vert_{0,\omeganu}^2}.
\end{equation}
\end{itemize}
We are interested in the performance of the $\h$-version of the method for some values of degree of accuracy~$\p$.
More precisely, in Section~\ref{subsection:nr-local-low}, we present the case~$\p=1$ where we shall also observe efficiency.
Instead, in Section~\ref{subsection:nr-local-high}, we take~$\p=2$ and show that we miss efficiency, albeit the equilibration of fluxes is valid in agreement with the theoretical prediction of Lemma~\ref{lemma:p-version:div-property}.

\subsection{The case~$\p=1$} \label{subsection:nr-local-low}
In this section, we consider the test case~$3$ with exact solution~$u_3$ in~\eqref{u3}
and study the performance of the $\h$-version of the method with degree of accuracy~$\p=1$ using uniform triangular and Cartesian meshes.
The local problems~\eqref{local:mixed} and~\eqref{local-problem:DN} are extremely simplified for the lowest order case on triangular meshes.
Indeed, the virtual element spaces with~$\p=1$ on triangular meshes  are standard finite element methods.
Therefore, all the projectors and stabilizations disappear in the formulation.

In Tables~\ref{table:nr-local-low} and~\ref{table:nr-local-low-Cartesian}, we depict the decay of the error of the method~$\vert u - \un \vert_{1,\Omega}$ with the corresponding
experimentally determined order of convergence (EOC), the error estimator~$\etaloc$~\eqref{global:localized:error-estimator},
the oscillation in the equilibrated flux condition introduced in~\eqref{oscillation:flux-condition}, and quantity~\eqref{new:quantity}, for uniform triangular and Cartesian meshes, respectively.

\begin{table}[H] 
\centering
\begin{tabular}{|c|cc|cc|c|c|}
\hline
 	& $\vert u - \un \vert_{1,\Omega}$ 	& EOC 			& $\etaloc$ 		& $\etaloc / \vert u - \un \vert_{1,\Omega}$	& \eqref{oscillation:flux-condition}  	& \eqref{new:quantity}\\
\hline
mesh~1 	& 0.446 								& --- 				& 0.469 			& 1.0515												& 6.454e-02								& 7.784e-02 \\
mesh~2 	& 0.234 								& 0.927			& 0.260 			& 1.1121												& 1.638e-02 							& 5.860e-02 \\
mesh~3 	& 0.120 								& 0.966			& 0.137 			& 1.1422												& 4.112e-03 							& 5.104e-02 \\
mesh~4 	& 0.060								& 0.9872			& 0.071 			& 1.1804												& 1.029-03 								& 4.869e-02 \\
\hline
\end{tabular}
\caption{$\h$-version of the method using triangular meshes with degree of accuracy~$\p=1$.}
\label{table:nr-local-low}
\end{table}

\begin{table}[H] 
\centering
\begin{tabular}{|c|cc|cc|c|c|}
\hline
	 		& $\vert u - \un \vert_{1,\Omega}$ 	& EOC 			& $\etaloc$ 		& $\etaloc / \vert u - \un \vert_{1,\Omega}$	& \eqref{oscillation:flux-condition}  	& \eqref{new:quantity}\\
\hline
mesh~1 	& 0.339 										& --- 				& 0.362 			& 1.068												& 1.035e-02								& 5.723e-02 \\
mesh~2 	& 0.169 										& 0.999			& 0.186		 	& 1.098												& 1.638e-02								& 4.703e-02 \\
mesh~3 	& 0.084 										& 0.999			& 0.093		 	& 1.106												& 2.600e-03 							& 4.407e-02 \\
mesh~4 	& 0.042										& 0.999 			& 0.046 			& 1.108												& 6.507e-04 							& 4.329e-02 \\
\hline
\end{tabular}
\caption{$\h$-version of the method using uniform Cartesian meshes with degree of accuracy~$\p=1$.}
\label{table:nr-local-low-Cartesian}
\end{table}

From Table~\ref{table:nr-local-low}, we observe that the decay of the error is optimal and that the error estimator~$\etaloc$ is efficient.
This could have been expected, since on triangular meshes and~$\p=1$ the VEM coincides with the lowest order FEM where it is well-known that the error estimator is reliable and efficient.
Moreover, the oscillation in the equilibrated flux condition introduced in~\eqref{oscillation:flux-condition} is of higher order.

\subsection{The case~$\p=2$} \label{subsection:nr-local-high}
In this section, we consider the test case~$3$ with exact solution~$u_3$ in~\eqref{u3}
and study the performance of the $\h$-version of the method with degree of accuracy~$\p=2$ using uniform triangular and Cartesian meshes.

In Tables~\ref{table:nr-local-high} and~\ref{table:nr-local-high-Cartesian}, we depict the decay of the error of the method~$\vert u - \un \vert_{1,\Omega}$ with the corresponding EOC, 
the error estimator~$\etaloc$~\eqref{global:localized:error-estimator},
the oscillation in the equilibrated flux condition introduced in~\eqref{oscillation:flux-condition}, and quantity~\eqref{new:quantity}, for uniform triangular and Cartesian meshes, respectively.

\begin{table}[H] 
\centering
\begin{tabular}{|c|cc|cc|c|c|}
\hline
	 		& $\vert u - \un \vert_{1,\Omega}$ 	& EOC 		& $\etaloc$ 		& $\etaloc / \vert u - \un \vert_{1,\Omega} $	& \eqref{oscillation:flux-condition} 	& \eqref{new:quantity}		\\
\hline
mesh~1 	& 0.095 										& --- 			& 0.171 			& 1.803												& 5.892e-03 					& 5.105e-02 \\
mesh~2 	& 0.024 										& 1.989		& 0.089 			& 3.718												& 7.365e-04  					& 4.722e-02 \\
mesh~3 	& 0.006 										& 1.991		& 0.064 			& 10.649												& 9.207e-05  					& 4.614e-02 \\
mesh~4 	& 0.001										& 2		 	& 0.055		 	& 36.798												& 1.150e-05 					& 4.588e-02 \\
\hline
\end{tabular}
\caption{$\h$-version of the method using triangular meshes with degree of accuracy~$\p=2$.}
\label{table:nr-local-high}
\end{table}

\begin{table}[H] 
\centering
\begin{tabular}{|c|cc|cc|c|c|}
\hline
	 		& $\vert u - \un \vert_{1,\Omega}$ 	& EOC 		& $\etaloc$ 	& $\etaloc / \vert u - \un \vert_{1,\Omega} $	& \eqref{oscillation:flux-condition} 	& \eqref{new:quantity}		\\
\hline
mesh~1 	& 0.079 										& --- 			& 0.162	 	& 2.053												& 2.329e-03 							& 5.518e-02 \\
mesh~2 	& 0.020 										& 1.965		& 0.070	 	& 3.461												& 2.911e-04  							& 5.130e-02 \\
mesh~3 	& 0.005										& 1.991		& 0.033		& 6.529												& 3.639e-05  							& 5.027e-02 \\
mesh~4 	& 0.001										& 1.998	 	& 0.016		& 12.846												& 4.549e-06							 	& 5.000e-02 \\
\hline
\end{tabular}
\caption{$\h$-version of the method using uniform Cartesian meshes with degree of accuracy~$\p=2$.}
\label{table:nr-local-high-Cartesian}
\end{table}

Differently from the low order case, in Tables~\ref{table:nr-local-high} and~\ref{table:nr-local-high-Cartesian}, we observe a loss of efficiency of the method.
This is in agreement with the arguments detailed in Section~\ref{subsection:no-efficiency}.
Interestingly enough, in agreement with Section~\ref{subsection:reliability-local}, the error estimator is however reliable and the oscillation in the equilibrated fluxes decay at high-order for both meshes.

\subsection{The low order adaptive scheme} \label{subsection:nr-local-adaptive}
In this section, we consider the test case~$3$ with exact solution~$u_3$ in~\eqref{u3}
and study the performance of the $\h$-adaptive method with degree of accuracy~$\p=1$ using a starting coarse triangular mesh.
In Table~\ref{table:nr-local-adaptive}, we depict the decay of the error of the method~$\vert u - \un \vert_{1,\Omega}$, the error estimator~$\etaloc$~\eqref{global:localized:error-estimator},
the oscillation in the equilibrated flux condition introduced in~\eqref{oscillation:flux-condition}, and quantity~\eqref{new:quantity}.

\begin{table}[H] 
\centering
\begin{tabular}{|c|c|cc|c|c|}
\hline
	 			& $\vert u - \un \vert_{1,\Omega}$ 	& $\etaloc$ 	& $\etaloc / \vert u - \un \vert_{1,\Omega} $	& \eqref{oscillation:flux-condition} 	& \eqref{new:quantity}		\\
\hline
iteration~1 	& 0.446										& 0.469	 	& 1.051												& 6.454e-02 							& 7.784e-02 \\
iteration~2 	& 0.385 										& 0.534 		& 1.386												& 3.951e-02  							& 7.813e-02 \\
iteration~3 	& 0.234 										& 0.260 		& 1.112												& 1.638e-02  							& 5.860e-02 \\
iteration~4 	& 0.182										& 0.239 		& 1.313												& 1.178e-02 							& 6.102e-02 \\
iteration~5 	& 0.120										& 0.137 		& 1.142												& 4.112e-03 							& 5.104e-02 \\
iteration~6 	& 0.098										& 0.162 		& 1.652												& 2.998e-03								& 5.517e-02 \\
iteration~7 	& 0.059										& 0.092		& 1.555												& 1.001e-03								& 5.045e-02 \\
iteration~8 	& 0.049										& 0.112	 	& 2.283												& 7.417e-04 								& 5.238e-02 \\
\hline
\end{tabular}
\caption{$\h$-adaptive version of the method using a starting coarse triangular mesh} with degree of accuracy~$\p=1$.
\label{table:nr-local-adaptive}
\end{table}

From Table~\ref{table:nr-local-adaptive}, we observe a loss of efficiency for the error estimator. This is clear, owing to the fact that the adaptive refinement strategy produces nontriangular meshes, even starting with a uniform triangular mesh.
Indeed, hanging nodes are added while performing the mesh refinement. Notwithstanding, we can still appreciate the reliability of the error estimator as well as the decay of the oscillation in the equilibrated fluxes.

\subsection{A global residual equilibration of fluxes}
\label{subsection:nr-global-local}
In Sections~\ref{subsection:nr-local-low} and~\ref{subsection:nr-local-high}, we observed that the error estimator is reliable,
in agreement with the theoretical results of Section~\ref{subsection:reliability-local}.
However, as motivated in Section~\ref{subsection:no-efficiency}, we observed a lack of efficiency.
To investigate better the effect of the mixed projectors appearing on the right-hand side of~\eqref{local:mixed}, we consider the ``global version'' of the localized problem
\begin{equation} \label{global-local:mixed}
\begin{cases}
\text{find } (\sigmaboldnnu, \rnnu) \in \Sigmaboldn \times \Vn \text{ such that, for all } \tauboldnnu \in \Sigmaboldn \text{ and } \qnnu \in \Vn, \\
\an (\sigmaboldn, \tauboldn) - (\div (\tauboldn), \rn)_{0,\omeganu} = -  ( \nabla \Pinablap \utilden, \Piboldzp \tauboldn)_{0, \Omega} \\
- (\div \sigmaboldn, \qn)_{0,\omeganu} = - ( \f  , \qnnu)_{0, \Omega} \\
\end{cases}
\end{equation}
and compute the quantities
\begin{equation} \label{quantities:global-local}
\Vert \rn \Vert_{0,\Omega},\quad \quad \quad \quad \quad  \Vert \Piboldzp \sigmaboldn    +   \nabla \Pinablap \un    \Vert _{0,\Omega}.
\end{equation}
We expect that the two quantities~\eqref{quantities:global-local} converge to zero. The first one represents the global residual, whereas the second one is the error on the projected flux reconstruction.
For the test case~$3$, we present the numerical results for the $\h$-version of the method with uniform Cartesian meshes for degree~$\p=2$ and~$3$ in Table~\ref{table:global-local}.

\begin{table}[H] 
\centering
\begin{tabular}{|c|c|c|c|c|}
\hline
 				& $\p=2$ , $\Vert \rn \Vert_{0,\Omega}$	& $\Vert \Piboldzp \sigmaboldn +\nabla \Pinablap \un    \Vert _{0,\Omega}$ 	& $\p=3$, $\Vert \rn \Vert_{0,\Omega}$ & $\Vert \Piboldzp \sigmaboldn +\nabla \Pinablap \un \Vert _{0,\Omega}$\\
\hline
mesh~1 		& 1.101e-02											& 2.321e-02																					& 1.766e-04										& 4.256e-04 	\\
mesh~2 		& 2.103e-03 											& 3.920e-03																					& 1.081e-05										& 2.896e-05  	\\
mesh~3 		& 7.213e-04										& 6.695e-04																					& 6.726e-07										& 1.913e-06  	\\
mesh~4 		& 1.802e-04										& 1.166e-04 																				& 4.201e-08										& 1.308e-07 	\\
mesh~5		& 4.505e-05										& 2.076e-05																				& 2.626e-09										& 9.432e-09\\
\hline
\end{tabular}
\caption{$\h$-version of the global version~\eqref{global-local:mixed} using a starting coarse Cartesian mesh} with degrees of accuracy~$\p=2$ and~$\p=3$.
\label{table:global-local}
\end{table}
Differently from the pure localized problem, the accuracy increases when increasing the order of the method.
This is an additional confirmation that the main culprit for the lack of efficiency must be sought in the combination of different projection operators in the right-hand side of~\eqref{local:mixed}.

\section{Conclusion} \label{section:conclusion}
We presented an a posteriori error analysis for the virtual element method based on equilibrated fluxes.
We introduced an equilibrated reliable and efficient a posteriori error estimator, using the virtual element solutions to the primal and mixed formulations.
Additionally, we showed that the discrete inf-sup constant for the mixed VEM is $\p$-independent
and constructed an explicit stabilization for the mixed VEM, characterized by lower and upper bounds with explicit dependence on the (local) degree of accuracy.

Several numerical experiments have been illustrated.
On the one hand, we showed that the effectivity index for the hypercircle method is $\p$-independent in practice. This is a major improvement with respect to the residual error estimator case.
On the other hand, we discussed an $\h\p$-adaptive algorithm and applied it to two test cases.
We observed exponential convergence in terms of the cubic root of the number of degrees of freedom in all the test cases.

Eventually, we began the analysis of the localized flux reconstruction in VEM.
Notably, we introduced a reliable computable error estimator, which can be obtained using the solution to the primal formulation and a combination of solutions to local mixed problems.
Numerics showed that the equilibrium condition is fulfilled but efficiency does not occur with the exception of the low-order case.
We also gave theoretical justifications for this lack of efficiency. It turns out that the culprit has to be sought in the mismatch in the use of the projection operators appearing in the proposed local mixed VEM.

\paragraph*{Acknowledgements.}
L. M. acknowledges the support of the Austrian Science Fund (FWF) project P33477. 

{\footnotesize
\bibliography{bibliogr}
}
\bibliographystyle{plain}

\end{document}